\documentclass[11pt]{article}

\usepackage{bbm}
\usepackage{amsmath}
\usepackage{amsfonts}
\usepackage{amssymb}
\usepackage[scr=boondoxupr, scrscaled=1.15] {mathalfa}
\numberwithin{equation}{section}
\usepackage{amsthm}
\newtheorem{theorem}{Theorem}
\newtheorem{corollary}[theorem]{Corollary}
\newtheorem{lemma}[theorem]{Lemma}
\newtheorem{remark}[theorem]{Remark}

\newtheorem{assumption}[theorem]{Assumption}

\usepackage{setspace}
\usepackage[a4paper,left=3.18cm, right=3.18cm, top=2.54cm, bottom=2.54cm]{geometry} 
\usepackage{enumitem}
\setlist[enumerate]{leftmargin=.5in}
\setlist[itemize]{leftmargin=.5in}
\usepackage{xcolor}
\definecolor{trueblue}{rgb}{0.0, 0.45, 0.81}

\usepackage{graphicx} 
\graphicspath{{images/}{./}}
\usepackage{multirow}  
\usepackage{subcaption}
\newlength{\Oldarrayrulewidth}
\newcommand{\Cline}[2]{%
  \noalign{\global\setlength{\Oldarrayrulewidth}{\arrayrulewidth}}%
  \noalign{\global\setlength{\arrayrulewidth}{#1}}\cline{#2}%
  \noalign{\global\setlength{\arrayrulewidth}{\Oldarrayrulewidth}}}
\usepackage{algorithm}
\usepackage{algpseudocode}
\usepackage{booktabs}

\usepackage[colorlinks=true, allcolors=trueblue]{hyperref}

\usepackage{titlesec}
\titleformat*{\section}{\Large\bfseries\sffamily\color{black}}
\titleformat*{\subsection}{\large\bfseries\sffamily\color{black}}
\renewenvironment{abstract}{%
  \quotation
  \textbf{\textsf{\color{black}{\abstractname.}}}
  \sffamily
}{\endquotation}
\usepackage{authblk}
\newcommand*\samethanks[1][\value{footnote}]{\footnotemark[#1]}
\makeatletter
\def\@fnsymbol#1{\ensuremath{\ifcase#1\or \dagger\or \ddagger\or
   \mathsection\or \mathparagraph\or \|\or **\or \dagger\dagger
   \or \ddagger\ddagger \else\@ctrerr\fi}}
\makeatother

\def\0{\mathbf{0}}
\def\1{\mathbf{1}}

\def\f{\mathbf{f}}

\def\g{\mathbf{g}}

\def\n{\mathbf{n}}

\def\p{\mathbf{p}}

\def\y{\mathbf{y}}

\def\rD{\mathrm{D}}
\def\rS{\mathrm{S}}

\def\bS{\mathbb{S}}
\def\R{\mathbb{R}}

\def\E{\mathcal{E}}
\def\sE{\mathscr{E}}

\def\F{\mathcal{F}}

\def\M{\mathcal{M}}
\def\Mh{{\mathcal{M}_h}}
\def\N{\mathcal{N}}
\def\Nh{{\mathcal{N}_h}}

\def\O{\mathrm{O}}

\def\bT{\mathbb{T}}
\def\V{\mathcal{V}}

\def\I{\mathtt{I}}
\def\B{\mathtt{B}}

\def\d{\mathrm{d}}

\newcommand{\argmin}{\operatornamewithlimits{argmin}}

\newcommand{\diam}[1]{\operatorname{diam}(#1)}
\newcommand{\inrad}[1]{\operatorname{inrad}(#1)}

\title{ 
\Large\bfseries\sffamily\color{black}{
Area-Preserving Parameterization: Variational\\
Principle, Gradient Flow, and Discrete Approximation
} }
\author{
\sffamily\color{black}
{Shu-Yung Liu\thanks{\footnotesize\raggedright Department of Mathematics, National Taiwan Normal University, Taipei, Taiwan (\href{mailto:lii227857@gmail.com}{lii227857@gmail.com}, \href{mailto:yue@ntnu.edu.tw}{yue@ntnu.edu.tw}) },\quad Kento Sakai\thanks{\footnotesize\raggedright Graduate School of Mathematical Sciences, University of Tokyo, Tokyo, Japan (\href{mailto:kento@ms.u-tokyo.ac.jp}{kento@ms.u-tokyo.ac.jp})}, 
\, and \, Mei-Heng Yueh\samethanks[1] }   
}

\date{}

\begin{document}
\captionsetup[figure]{labelfont={bf,sf},name={Figure},labelsep=period}
\captionsetup[table]{labelfont={bf,sf},name={Table},labelsep=period}
\maketitle

\begin{abstract}
Area-preserving parameterizations are used in applications where relative surface areas must be preserved. We study this problem through the stretch energy. For orientation-preserving diffeomorphisms between compact Riemannian 2-manifolds of equal total area, we show that the stretch energy is characterized by the variance of the area ratio and that its critical points are area-preserving. This variational characterization leads naturally to an $L^2$-gradient flow, which we call the authalic flow. We then develop its simplicial counterpart based on the discrete stretch energy and obtain computational methods for open and closed surfaces of several topological types. To connect the discrete formulation with the smooth theory, we prove the first-order consistency of the stretch energy with respect to mesh refinement and establish a first-order $L^2$ area-distortion bound for discrete global minimizers under the stated geometric approximation assumptions. Numerical experiments on benchmark meshes produce fold-free maps in all reported tests and show competitive area preservation compared with existing methods.

\bigskip
\textbf{Keywords.} simplicial surface, simplicial mapping, area-preserving parameterization

\medskip
\textbf{Mathematics Subject Classification} 65K10, 65D18, 53-08, 68U05
\end{abstract}

\section{Introduction}
\label{sec:1}
A surface parameterization is a mapping from a surface in three-dimensional space to a simpler parameter domain. Surface parameterizations have been widely used in geometry processing, computer graphics, and medical imaging. 
In computer graphics, parameterizations enable diffeomorphic surface registration by matching surfaces in the parameter domain \cite{LuLY14, LaLu14, ChLL15, YuLW17}. In medical imaging, they are used to compute spherical harmonic coefficients for shape analysis \cite{BrGK95}, with applications to brain ventricles \cite{GeSJ01}, hippocampal analysis in schizophrenia \cite{StLP04}, and dementia \cite{GeCC09, ChLS20}. For comprehensive surveys of surface parameterization methods and applications, see \cite{FlHo05, ShPR06}.

Once a surface has been parameterized, a fundamental issue is to quantify the distortion introduced by the map. The principal types of distortion concern angles, areas, and lengths. While a locally length-preserving (isometric) parameterization would be ideal because it preserves both angles and areas, such maps generally do not exist for arbitrary surfaces. In fact, when the target is a planar domain, such a map can exist only if the source surface is locally flat. More generally, the source and target metrics must be locally isometric. Angle-preserving (conformal) parameterization \cite{GuYa08} preserves intersection angles between arbitrary curves and therefore retains local geometric features, which explains its wide use in engineering applications \cite{JiGH18}. The drawback is that conformality often produces substantial area distortion, so some regions are enlarged while others are compressed.

For applications involving quantitative area data, area distortion should be controlled. Area-preserving (authalic) parameterizations preserve relative areas and represent surface measure in the parameter domain without local area distortion. In particular, area-preserving parameterizations have been applied to the shape analysis of anatomical structures \cite{BrGK95, GeSJ01, StLP04, StOX06, GeCC09}. However, the computation of bijective and numerically robust area-preserving parameterizations is more difficult than that of angle-preserving parameterizations. 
From a computational perspective, area-preserving parameterization is challenging because it typically requires solving a nonlinear problem in which all triangle areas are coupled through the vertex positions, so that reducing area distortion can easily produce folded triangles in the parameter domain. The situation is further complicated by the fact that an exactly area-preserving simplicial map may not exist for a given mesh connectivity and target geometry.

Several approaches have been developed to address this challenge. One line of work is based on the theory of optimal transport maps. In \cite{DoTa10}, area preservation on the unit sphere is enforced by solving ordinary differential equations and then applying a transportation-cost minimization step. In a related direction, Gu et al.~\cite{GuLS16} studied the problem on convex polyhedral meshes. They established a variational principle for discrete optimal transport and the discrete Monge--Amp\`ere equation on convex polyhedral meshes. This framework was applied to area-preserving parameterization of the unit disk~\cite{ZhSG13}. This method introduces a functional defined on piecewise linear convex functions, whose critical point induces a convex decomposition that gives the desired area-preserving map. The framework was later extended to multiply connected open surfaces \cite{SuCQ16} through suitable manipulations of the measures and to spherical maps using either spherical area measures \cite{NaSZ17} or spherical power diagrams \cite{CuQW19}.

A different line of work is the density-equalizing map \cite{ChRy18}, which formulates the problem as a partial differential equation. It evolves the area ratio of each triangle according to the heat equation until equilibrium, and then integrates the induced velocity field to recover the vertex positions. By adjusting the density, the approach extends to a variety of target geometries, including multiply connected surfaces~\cite{LyCh24}, hemispheres \cite{GiCK21}, spherical caps \cite{ChGK22}, and hemispheroids \cite{ChSh25}. It has also been generalized to genus-zero closed surfaces with spherical \cite{LyLC24} and ellipsoidal \cite{LyLC26} target domains by flowing on the corresponding tangent planes, and to the torus via a periodic rectangular domain \cite{YaCh26}. More recently, this framework has been combined with deep neural networks to improve numerical performance and adaptivity \cite{HuLC25}.

Alternatively, Yueh et al.~\cite{YuLW19} incorporated the area-preserving condition into the cotangent-weighted Laplacian and introduced the stretch energy. It was later shown that area-preserving simplicial maps attain the lower bound of the stretch energy under total-area normalization~\cite{Yueh23}. In practice, the stretch energy is minimized using a fixed-point iteration derived from the stationary condition. Subsequent work extended this idea to spherical and toroidal parameterizations of closed surfaces using stereographic projection \cite{YuLL19} and holomorphic $1$-forms \cite{YuLL20}. Viewing stretch energy minimization more broadly as an optimization problem has also led to several convergent algorithms, including a Riemannian gradient method \cite{SuYu24} and preconditioned nonlinear conjugate gradient methods \cite{LiYu24, LiYu25, LiYu26}, which significantly improve robustness and effectiveness.

However, for most of these approaches, the relation to diffeomorphisms on smooth manifolds has not been established. 
From a numerical perspective, this also raises the question of whether the area distortion of discrete minimizers vanishes as the mesh is refined. 
In this paper, we address these theoretical and numerical questions through the stretch energy. 
We formulate the stretch energy directly for diffeomorphisms between equal-area Riemannian $2$-manifolds and show that every critical point is area-preserving. This allows us to derive the corresponding $L^2$-gradient flow, called the authalic flow, which can then be discretized for triangular meshes using the discrete stretch energy. This method is adapted to parameterize surfaces across a range of topologies, including genus-zero open surfaces that are simply and multiply connected, as well as closed surfaces of genus zero and genus one. Furthermore, to ensure the discretization is valid, we establish the consistency of the discrete stretch energy with the continuous counterpart as the mesh size approaches zero, and we derive an $L^2$ area-distortion bound for discrete global minimizers.

\subsection{Contributions}
Our main contributions are as follows:
\begin{itemize}
    \item \textbf{Variational principle.} We formulate the stretch energy for diffeomorphisms between equal-area Riemannian $2$-manifolds, interpret it as the variance of the area ratio, and prove that every critical point is area-preserving.
    \item \textbf{Gradient flow.} We derive the $L^2$-gradient flow of the stretch energy, which we call the authalic flow. We then discretize it in space via the discrete stretch energy and in time via the quasi-implicit Euler method, which provides practical algorithms for open and closed surfaces with different topologies.
    \item \textbf{Discrete approximation.} We prove that the discrete stretch energy converges to its continuous counterpart as the mesh size tends to zero, and that the discrete global minimizers have $L^2$ area distortion of order the mesh size under the stated approximation assumptions.
    \item \textbf{Numerical validation.} We test the proposed method on standard benchmark models and show that it produces fold-free parameterizations with smaller area distortion than competing state-of-the-art methods in the reported experiments.
\end{itemize}

\subsection{Organization}
The remainder of the paper is organized as follows. Section~\ref{sec:2} develops the continuous theory: we introduce the stretch energy for diffeomorphisms on equal-area Riemannian $2$-manifolds,  prove that its critical points are area-preserving, and derive the authalic flow. Section~\ref{sec:3} introduces the discrete stretch energy for triangulated surfaces and derives its gradient. Section~\ref{sec:4} derives the discrete analog of the authalic flow and adapts it to surfaces with various topologies using the quasi-implicit Euler method. Section~\ref{sec:5} establishes consistency with the continuous theory and proves an $L^2$ area-distortion estimate for the discrete global minimizers. Section~\ref{sec:6} presents benchmark experiments and compares the proposed method with state-of-the-art methods. Finally, Section~\ref{sec:7} concludes with a summary of the main findings and a discussion of limitations and future directions.

\section{Stretch energy and authalic flow}
\label{sec:2}
In this section, we formulate the stretch energy for diffeomorphisms between equal-area Riemannian $2$-manifolds. We prove that its critical points are area-preserving and derive the associated $L^2$-gradient flow. 

\subsection{Stretch energy functional}
\label{sec:2.1}
Let $(\M,g)$ and $(\N,h)$ be two compact, connected, oriented Riemannian $2$-manifolds whose area elements are denoted by $\d A_g$ and $\d A_h$, and let $f:\M\to\N$ be an orientation-preserving diffeomorphism (see Figure~\ref{fig:manifold}). Assume that $\M$ and $\N$ have the same total area, namely
\[
|\M| :=  \int_\M 1 \, \d A_g = \int_\N 1 \, \d A_h =: |\N|. 
\]
The \emph{area ratio of $f$} is the smooth function $J_f:\M\to(0,\infty)$ defined by
\begin{equation}\label{eq:area_ratio}
f^*\d A_h = J_f\,\d A_g.
\end{equation}
We say that $f$ is \emph{area-preserving} if $J_f= 1$.

\begin{figure}[tbp]
\centering
\resizebox{\textwidth}{!}{
\begin{tabular}{ccc}
\includegraphics[height=3.0cm]{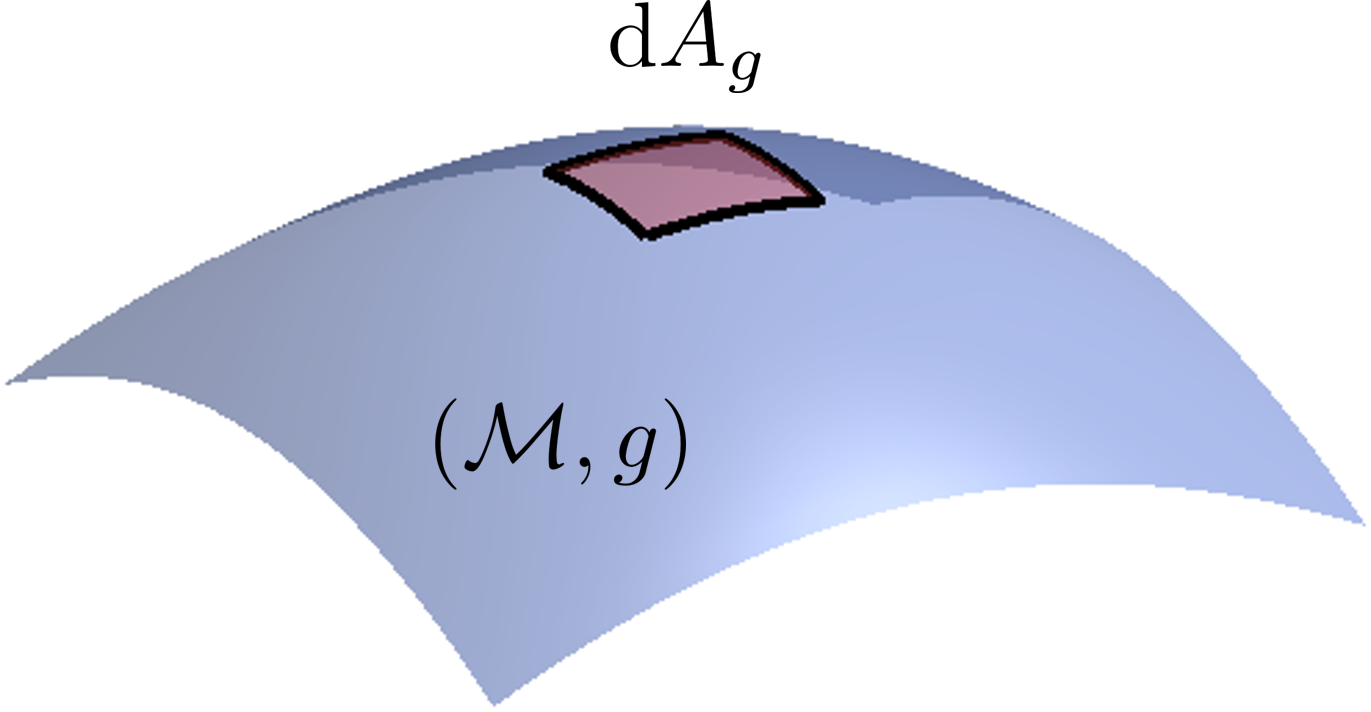} &
\raisebox{1.7cm}{$\xrightarrow[]{\quad \mbox{$f$} \quad}$} &
\includegraphics[height=3.0cm]{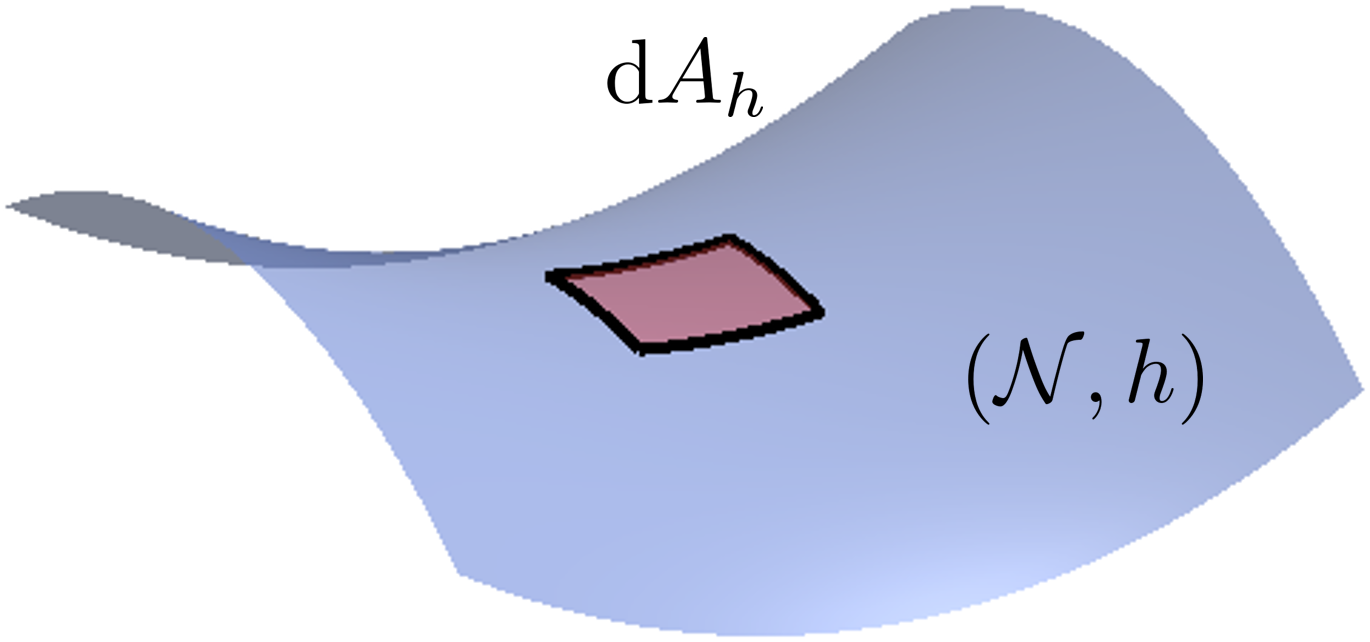}
\end{tabular}
}
\caption{An illustration of a diffeomorphism between Riemannian $2$-manifolds. }
\label{fig:manifold}
\end{figure}

To quantify how far $f$ deviates from area preservation, we consider the variance of $J_f$ with respect to the probability measure $\d P_g = \d A_g / |\M|$:
\[
\mathrm{Var}(J_f) = \int_\M (J_f - \mathbb{E}(J_f))^2 \, \d P_g,
\]
where $\mathbb{E}(\cdot)$ denotes the expected value with respect to $\d P_g$. Under the normalization of equal total area, we have
\[
\mathbb{E}(J_f) = \int_\M J_f \, \d P_g =  \int_\M J_f \, \frac{\d A_g}{|\M|} 
= \frac{1}{|\M|} \int_\M f^* \d A_h
= \frac{|\N|}{|\M|} = 1.
\]
Hence, the variance is simplified to
\begin{equation}
\mathrm{Var}(J_f) = \mathbb{E}(J_f^2) - \mathbb{E}(J_f)^2 = \frac{1}{|\M|} \bigg(\int_\M J_f^2 \, \d A_g - |\M| \bigg).
\label{eq:Es_var}
\end{equation}
Motivated by this identity, we define the \emph{stretch energy} as
\begin{equation}
\sE_\rS(f) = \int_\M J_f^2 \, \d A_g \geq |\M|,
\label{eq:Es}
\end{equation}
which directly measures the area distortion of $f$, with equality if and only if $f$ is area-preserving.

\subsection{Variational derivative of stretch energy}
\label{sec:2.2}
Moser~\cite{Mose65} proved that if $\mu_0$ and $\mu_1$ are two volume forms with equal total volume on a compact, connected, oriented manifold without boundary, then there exists an orientation-preserving diffeomorphism $\psi$ such that $\psi^*\mu_1=\mu_0$. Banyaga~\cite{Bany1974} extended this result to compact oriented manifolds with boundary and showed that $\psi$ may be chosen to restrict to the identity on the boundary. Bruveris et al.~\cite{Bruv18} later extended the result to manifolds with corners.

Applying this result to $\d A_g$ and $f^*\d A_h$, we obtain a diffeomorphism $\psi:\M\to\M$ such that $\psi^*\d A_g=f^*\d A_h$. Consequently, $f\circ\psi^{-1}$ is an orientation-preserving area-preserving diffeomorphism from $(\M,g)$ to $(\N,h)$. Thus, within the orientation-preserving diffeomorphism class, $\sE_\rS$ attains its global minimum $|\M|$.

In fact, every critical point of $\sE_\rS(f)$ is area-preserving. Intuitively, a diffeomorphism $f:\M \to \N$ stretches the source surface $\M$ onto the target surface $\N$, and the stretch energy $\sE_\rS$ measures the nonuniformity of this stretching. Thus, its critical points correspond to equilibrium states in which the local area is preserved everywhere.

To prove this claim, we first show that the infinitesimal area change is exact, i.e., under a perturbation of the map, the local area change is determined by the area swept out along the boundary.

\begin{figure}[tbp]
\centering
\resizebox{\textwidth}{!}{
\begin{tabular}{ccc}
\includegraphics[height=3.0cm]{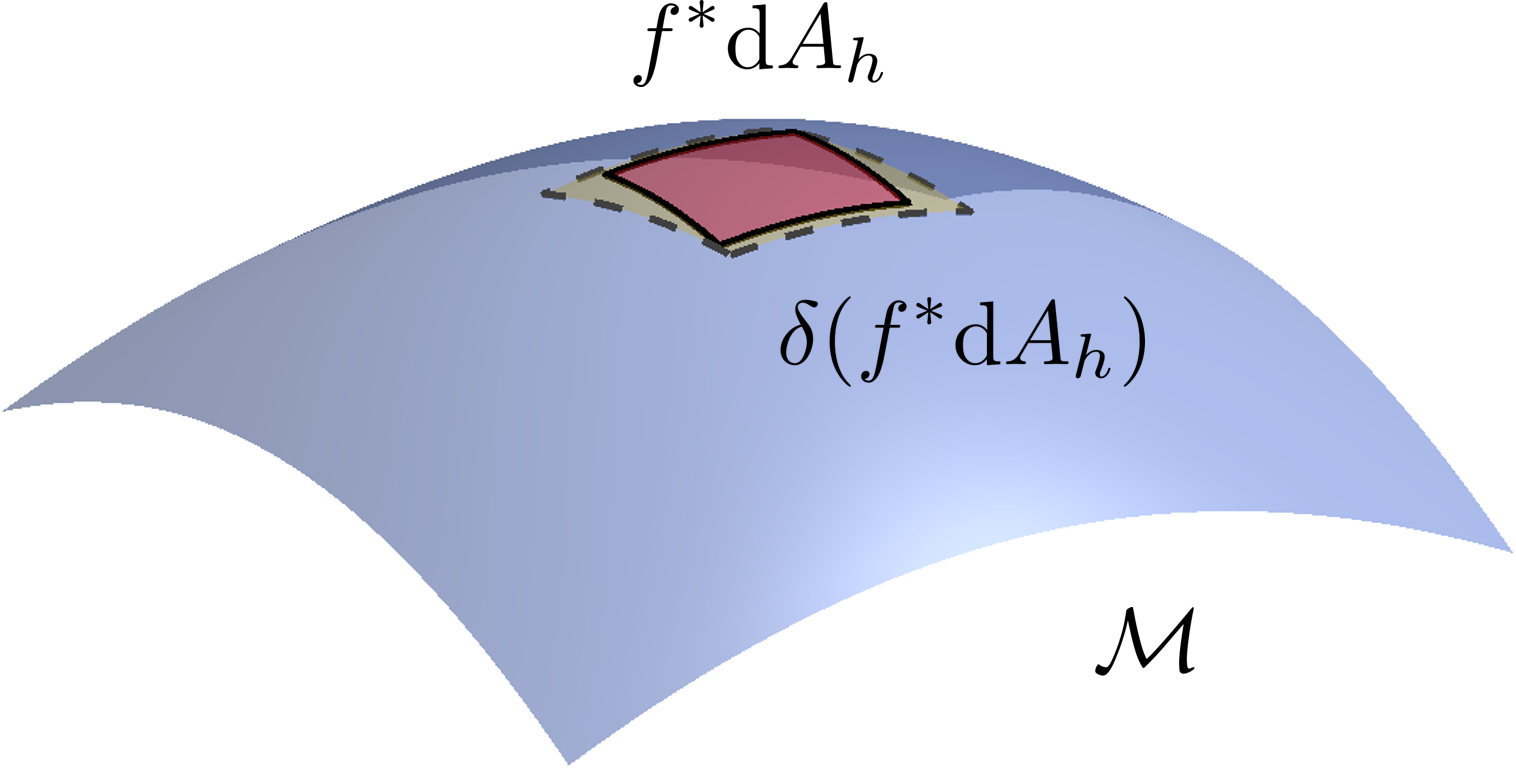} &
\raisebox{1.7cm}{$\xrightarrow[]{~ \mbox{$f$} ~}$} &
\includegraphics[height=3.0cm]{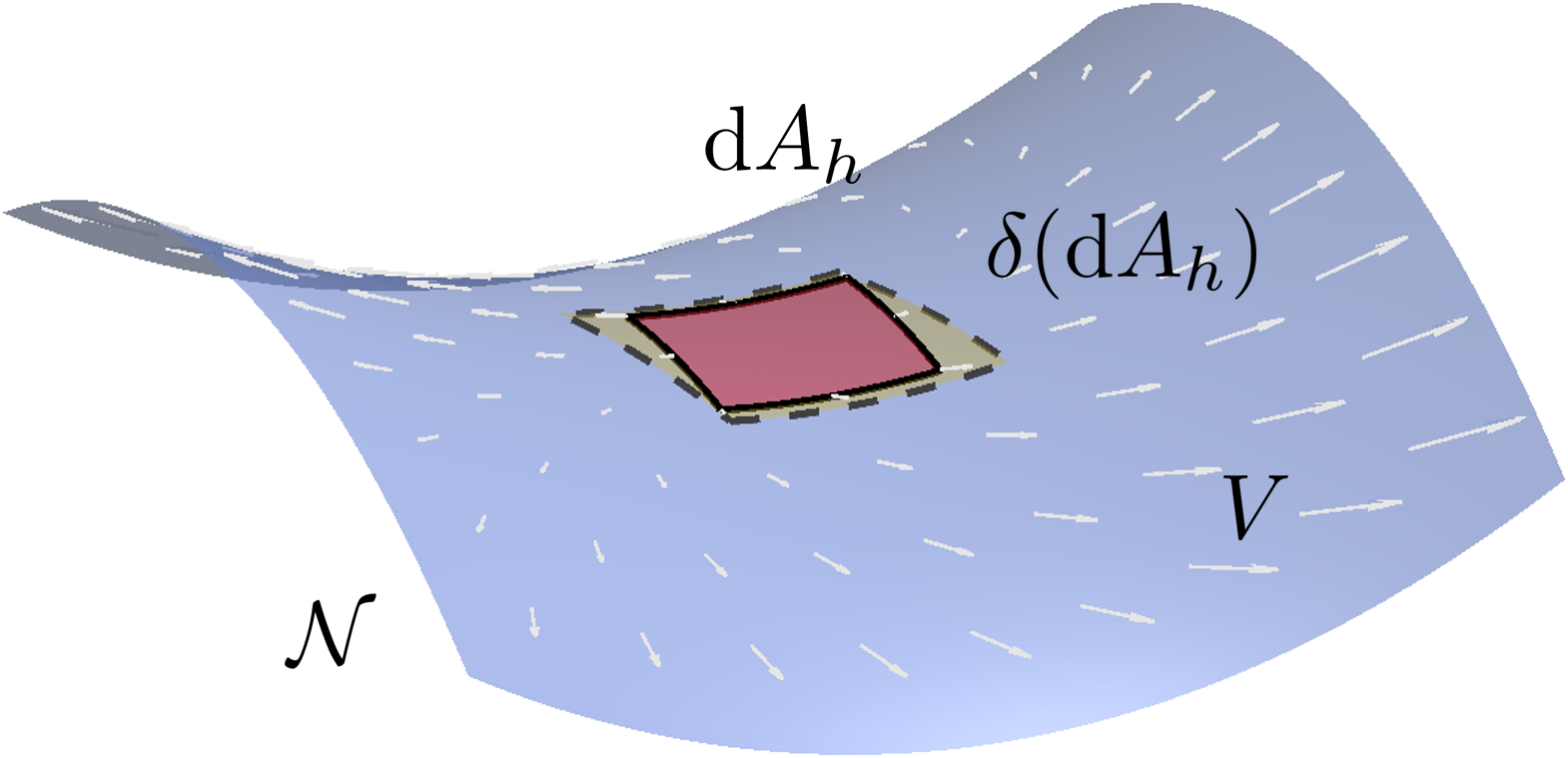}
\end{tabular}
}
\caption{The vector field $V$ is defined on $\N$. It induces a variation of $\d A_h$ on $\N$, whose pullback gives the variation $\delta(f^*\d A_h)$ on $\M$.}
\label{fig:patch_change}
\end{figure}

\begin{lemma} \label{lem:delta_area_exact}
Suppose $(\M,g)$ and $(\N,h)$ are oriented Riemannian $2$-manifolds, and $f:\M \to \N$ is a diffeomorphism. Let $V \in \Gamma(T\N)$ be a smooth vector field tangent to
$\partial\N$ when $\partial\N\neq\varnothing$, and let $\phi_t$ be the local flow induced by $V$ such that $\phi_0=\operatorname{id}_{\N}$.
Then the induced first variation of the pullback area form,
$$
\delta(f^*\d A_h) := \left.\frac{\d}{\d t}\right|_{t=0} f^* (\phi_t^*\d A_h),
$$
as illustrated in Figure~\ref{fig:patch_change}, satisfies
\begin{equation} \label{eq:delta_exact}
\delta(f^*\d A_h)=\d\beta, \qquad  \beta:=f^*(\iota_V\d A_h).
\end{equation}
In particular, $\delta(f^*\d A_h)$ is exact.
\end{lemma}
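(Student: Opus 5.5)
The plan is to compute the variation on $\N$ first and only then pull it back by $f$. Since $f$ does not depend on $t$, pullback by $f$ is a fixed linear map on forms. It commutes with $\frac{\d}{\d t}$, because differentiation in $t$ acts pointwise on the coefficients of the forms. Hence
\[
\delta(f^*\d A_h)=f^*\Bigl(\left.\tfrac{\d}{\d t}\right|_{t=0}\phi_t^*\d A_h\Bigr).
\]
By the definition of the Lie derivative along the flow $\phi_t$ of $V$, the inner derivative equals $\mathcal{L}_V\,\d A_h$.

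The key step is Cartan's magic formula,
\[
\mathcal{L}_V\,\d A_h=\d(\iota_V\d A_h)+\iota_V(\d\,\d A_h).
\]
Since $\d A_h$ is a top-degree form on the $2$-manifold $\N$, we have $\d(\d A_h)=0$. Therefore $\mathcal{L}_V\d A_h=\d(\iota_V \d A_h)$, which is just the divergence identity $\mathcal{L}_V\d A_h=(\operatorname{div}_h V)\,\d A_h$ written in exact form. Because pullback commutes with the exterior derivative ($f^*\d=\d f^*$), we obtain
\[
\delta(f^*\d A_h)=f^*\d(\iota_V\d A_h)=\d\bigl(f^*(\iota_V\d A_h)\bigr)=\d\beta.
\]
This shows $\delta(f^*\d A_h)$ is exact, with the smooth $1$-form $\beta$ as primitive.

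The main obstacle is making sure the flow is well defined for small $t$ when $\partial\N\neq\varnothing$, since the derivative at $t=0$ requires it. This is where tangency of $V$ to $\partial\N$ enters. Because $\N$ is a compact manifold with boundary and $V$ is tangent to $\partial\N$, the local flow preserves $\N$ and $\partial\N$ for $|t|$ small. Hence $\phi_t$ is a family of diffeomorphisms of $\N$ depending smoothly on $t$, and the Lie derivative formula holds up to the boundary; it can be checked in collar charts, or by extending $V$ to a slightly larger open manifold. Beyond this point the argument is purely local, so no other global hypotheses are needed. The conclusion also gives $\iota_\nu\beta$-type boundary terms that vanish on $\partial\M$ by tangency, which will presumably be used later when integrating by parts, though the lemma itself does not require it.
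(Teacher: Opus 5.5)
Your proof is correct and matches the paper's argument step for step: you move $\left.\frac{\d}{\d t}\right|_{t=0}$ inside the fixed pullback $f^*$, identify the derivative as $\mathcal{L}_V\,\d A_h$, apply Cartan's formula with $\d(\d A_h)=0$ on a surface, and commute $f^*$ with $\d$. Your extra paragraph on why tangency of $V$ to $\partial\N$ keeps $\phi_t$ inside $\N$ for small $t$ justifies a point the paper leaves implicit (compactness of $\N$ is not actually needed there), but it does not change the computation.
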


\begin{proof}
First, the variation is written as
\[
\delta( f^* \d A_h) 
= f^*\left( \left.\frac{\d}{\d t}\right|_{t=0}\phi_t^* \d A_h \right).
\]
Then, by the definition of the Lie derivative~\cite[Equation 12.8]{Lee13}, we have
\begin{equation}
 \left.\frac{\d}{\d t}\right|_{t=0}\phi_t^* \d A_h 
= \mathcal L_V \d A_h.
\label{eq:Lie_1}
\end{equation}
Cartan's formula gives 
\begin{equation*}
\mathcal L_V\d A_h = \d(\iota_V\d A_h)+\iota_V\d(\d A_h).
\end{equation*}
Since $\N$ is a surface, every 2-form is closed and $\d(\d A_h)=0$. Hence
\begin{equation}
\mathcal L_V\d A_h = \d(\iota_V\d A_h).
\label{eq:Lie_2}
\end{equation}
Therefore, the commutativity of the pullback and the exterior derivative gives
\[
\delta( f^* \d A_h) 
= f^* \d (\iota_V\d A_h)
= \d \bigl(f^*(\iota_V\d A_h)\bigr),
\]
which concludes \eqref{eq:delta_exact} by defining $\beta=f^*(\iota_V\d A_h)$.
\end{proof}

With Lemma~\ref{lem:delta_area_exact}, we establish the following result.

\begin{theorem} \label{thm:dEs_area_const}
Let $(\M, g)$ and $(\N, h)$ be compact, connected, oriented Riemannian $2$-manifolds such that $|\M| = |\N|$. Let $f \colon \M \to \N$ be an orientation-preserving diffeomorphism. 
Then $J_f\equiv1$ if and only if $\delta\sE_\rS(f)[W]=0$ for every $W=V\circ f$, where $V\in\Gamma(T\N)$ is a smooth vector field satisfying $V|_{\partial\N}=0$.
\end{theorem}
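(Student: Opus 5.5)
The plan is to compute the first variation of $\sE_\rS$ explicitly using Lemma~\ref{lem:delta_area_exact}, integrate by parts, and read off the Euler--Lagrange condition $\d J_f = 0$.

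\textbf{Step 1: the variation of $J_f$.} Consider the variation $f_t=\phi_t\circ f$, whose velocity at $t=0$ is $W=V\circ f$. By \eqref{eq:area_ratio} we have $f_t^*\d A_h = J_{f_t}\,\d A_g$. Since $\d A_g$ is fixed, Lemma~\ref{lem:delta_area_exact} gives
\[
\delta J_f\,\d A_g=\d\beta,\qquad \beta=f^*(\iota_V\d A_h).
\]
Hence
\[
\delta\sE_\rS(f)[W]=\int_\M 2J_f\,\delta J_f\,\d A_g=2\int_\M J_f\,\d\beta .
\]

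\textbf{Step 2: integration by parts.} Use $J_f\,\d\beta=\d(J_f\beta)-\d J_f\wedge\beta$ and Stokes' theorem:
\[
\delta\sE_\rS(f)[W]=2\int_{\partial\M}J_f\,\beta-2\int_\M \d J_f\wedge\beta .
\]
The boundary term vanishes because $V|_{\partial\N}=0$, so $\beta$ vanishes on $\partial\M=f^{-1}(\partial\N)$. This leaves
\[
\delta\sE_\rS(f)[W]=-2\int_\M \d J_f\wedge f^*(\iota_V\d A_h).
\]

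\textbf{Step 3: the forward direction.} If $J_f\equiv1$, then $\d J_f=0$ and the variation vanishes for every admissible $V$.

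\textbf{Step 4: the converse.} This is the main step. Push forward to $\N$: set $u=J_f\circ f^{-1}$. The integrand then becomes $f^*\bigl(\d u\wedge\iota_V\d A_h\bigr)$, and since $f$ is orientation-preserving,
\[
\delta\sE_\rS(f)[W]=-2\int_\N \d u\wedge \iota_V\d A_h .
\]
Because $\d A_h$ is a top-degree form, $\d u\wedge\d A_h=0$, so $\iota_V$ applied to it gives zero. Expanding with the graded Leibniz rule yields
\[
\d u\wedge\iota_V\d A_h=(\iota_V\d u)\,\d A_h=\langle\nabla u,V\rangle_h\,\d A_h.
\]
Thus
\[
\delta\sE_\rS(f)[W]=-2\int_\N\langle\nabla u,V\rangle_h\,\d A_h .
\]
Now choose $V=\chi\nabla u$, where $\chi\ge0$ is a smooth cutoff that vanishes on $\partial\N$ and is positive in the interior. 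This choice is smooth because $J_f$ is smooth, and it is admissible. Vanishing of the variation then gives $\int_\N\chi|\nabla u|^2\,\d A_h=0$, so $\nabla u=0$ in the interior of $\N$, and by continuity on all of $\N$.

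By connectedness $J_f$ is constant. Finally, $\int_\M J_f\,\d A_g=|\N|=|\M|$ forces that constant to be $1$. The only care needed here is the sign and orientation bookkeeping in the contraction identity; note also that for closed $\N$ no cutoff is needed, and one can simply take $V=\nabla u$.
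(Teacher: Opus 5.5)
Your proof is correct. Steps 1--3 are the same as the paper's: the variation via Lemma~\ref{lem:delta_area_exact}, the Leibniz rule, and Stokes' theorem with $\beta|_{\partial\M}=0$.

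The difference is in the converse.

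The paper stays on $\M$. It argues that $V\mapsto\beta=f^*(\iota_V\d A_h)$ is a pointwise isomorphism, so every $1$-form compactly supported in $\operatorname{int}(\M)$ comes from an admissible $V$. It then tests with $\beta=\psi^2\star\d J_f$ and obtains $\int_\M\psi^2|\d J_f|_g^2\,\d A_g=0$.

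You instead push forward to $\N$. The contraction identity $\d u\wedge\iota_V\d A_h=(\iota_V\d u)\,\d A_h$ reduces the variation to $-2\int_\N\langle\nabla_h u,V\rangle_h\,\d A_h$. This is exactly formula~\eqref{eq:grad_var_E}, which the paper derives only later in Section~\ref{sec:2.3}, via the divergence theorem. You then write the test field $V=\chi\nabla_h u$ down explicitly, so you need neither the surjectivity argument for $V\mapsto\beta$ nor the Hodge star.

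What your route buys is a direct link between the critical-point characterization and the $L^2$-gradient that drives the authalic flow. The small cost is that you need a boundary-defining function $\chi$. Such a function exists via a collar, or you can use local bumps with $\chi(p)>0$ at each interior point $p$. The paper's route keeps the argument intrinsic to $(\M,g)$, at the price of the invertibility discussion in its Remark.
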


\begin{proof}
By the definition of the stretch energy \eqref{eq:Es}, we have
\begin{equation}
\delta \sE_\rS(f)[W]
= \delta \int_\M J_f^2 \, \d A_g
= 2 \int_\M J_f\, (\delta J_f \, \d A_g).
\label{eq:dEs_1}
\end{equation}
Using \eqref{eq:area_ratio} and Lemma~\ref{lem:delta_area_exact}, we obtain
\begin{equation}
\delta J_f \, \d A_g = \delta (J_f \, \d A_g) = \delta (f^* \d A_h) = \d \beta.
\label{eq:Lie_3}
\end{equation}
Substituting into \eqref{eq:dEs_1} and expanding via the Leibniz rule gives
\[
\delta \sE_\rS(f)[W] = -2 \int_\M \d J_f \wedge \beta + 2\int_\M \d(J_f\beta).
\]
Since $V|_{\partial\N}=0$, we have $\beta|_{\partial\M}=0$. By Stokes' theorem,
\[
\int_\M \d(J_f\beta)=\int_{\partial\M} J_f\beta = 0.
\]
Therefore, we obtain
\begin{equation}
\delta \sE_\rS(f)[W] = -2 \int_\M \d J_f \wedge \beta.
\label{eq:dEs_3}
\end{equation}
If $J_f \equiv 1$, then $\d J_f=0$, and \eqref{eq:dEs_3} immediately gives $\delta \sE_{\rS}(f)[W]=0$ for all $W$.

Conversely, suppose $\delta \sE_{\rS}(f)[W]=0$ for all admissible variations $W$.
Since $\d A_h$ is non-degenerate, the interior product $V \mapsto \iota_V\d A_h$ is an isomorphism. The composite map $W = V \circ f \mapsto \beta = f^*(\iota_V\d A_h)$ is pointwise invertible since the pullback by the diffeomorphism $f$ is also an isomorphism.
Therefore, choosing $V$ to vanish on $\partial\N$ ensures that $\beta$ can be any smooth $1$-form on $\M$ compactly supported in $\operatorname{int}(\M)$.

In particular, let $\star$ denote the Hodge star on $(\M, g)$. For any $\psi \in C_c^{\infty}(\operatorname{int}(\M))$, we choose $W$ so that
\[
\beta = \psi^2 \star \d J_f.
\]
By inserting this into \eqref{eq:dEs_3} and dividing by $-2$, we obtain
\[
0 = \int_\M \psi^2 \, \d J_f \wedge \star \d J_f
= \int_\M \psi^2 \left| \d J_f \right|_g^2 \, \d A_g.
\]
Since the integrand is nonnegative, $\psi\,\d J_f=0$ pointwise. For any $p\in\operatorname{int}(\M)$, choosing $\psi$ with $\psi(p)\neq 0$ forces $\d J_f(p)=0$. Hence, $\d J_f$ vanishes on $\operatorname{int}(\M)$, and by continuity on all of $\M$. 

Finally, since $\M$ is connected, $J_f$ is constant, and the assumption $|\M|=|\N|$ gives
\[
J_f\,|\M| = \int_\M J_f \, \d A_g = \int_\M f^*\d A_h = |\N| = |\M|,
\]
which implies $J_f \equiv 1$.
\end{proof}

The following local-coordinate computation provides a concrete picture of the exactness of area variation.

\begin{remark}
Let $y = (y^1, y^2)$ be oriented local coordinates on an open set $U \subset \N$, and write $H(y) = (h_{ij}(y))$ for the coordinate matrix of $h$.
The area form on $\N$ takes the form
\[
\d A_h = \rho(y)\,\d y^1\wedge\d y^2,
\quad \text{with} \quad
\rho(y)=\sqrt{\det H(y)}.
\]
Write $V$ and a test vector field $X$ locally as
$V = V^i\partial_{y^i}$ and $X = X^i\partial_{y^i}$.
A direct computation gives
\begin{align*}
(\iota_V\d A_h)(X) &= \d A_h(V, X) 
= \rho(y) \, (\d y^1\wedge\d y^2)(V, X) \\
&= \rho(y) \det
\begin{pmatrix}
    V^1  &  X^1\\
    V^2  &  X^2
\end{pmatrix} 
= \rho(y) \left( V^1 X^2 - V^2 X^1 \right),
\end{align*}
which is the oriented area of the parallelogram spanned by $V$ and $X$, scaled by $\rho(y)$ to convert this to the Riemannian area on $\N$.
We can rewrite the right-hand side as the action of a $1$-form on $X$:
$$
(\iota_V\d A_h)(X) = \rho(y) \left( V^1 \d y^2(X) - V^2 \d y^1(X) \right).
$$
Since this identity holds for every vector field $X$, we obtain the explicit $1$-form
$$
\iota_V\d A_h = \rho(y) \left( V^1 \d y^2 - V^2 \d y^1 \right).
$$

This can be used to verify the pointwise invertibility of $V\mapsto\beta$ directly. Fix local coordinates $x = (x^1, x^2)$ on $\widetilde{U} \subset \M$ with $f(\widetilde{U})\subset U$.
The $1$-form $\beta$ is written as 
\[
\beta = f^*(\iota_V\d A_h) = \rho(f) \left( W^1\,\d f^2 - W^2\,\d f^1 \right),
\]
where $W^i = V^i \circ f$.
Under local coordinates, we write
\[
\d f^i = f^i_1 \, \d x^1 + f^i_2 \,\d x^2, \qquad \beta = \beta_1 \, \d x^1 + \beta_2 \, \d x^2.
\]
Therefore, we have
\[
\begin{bmatrix}
    \beta_1\\
    \beta_2
\end{bmatrix}
= \rho(f)
\begin{bmatrix}
    f_1^2 & -f_1^1\\
    f_2^2 & -f_2^1
\end{bmatrix}
\begin{bmatrix}
    W^1\\
    W^2
\end{bmatrix}.
\]
The coefficient matrix has determinant
\[
\rho(f)^2\,(f_1^1f_2^2-f_2^1f_1^2)=\det H(f)\,\det(\d f).
\]
Since $H(f)$ is positive definite and $f$ is a diffeomorphism, the determinant $\det H(f)>0$ and $\det(\d f)\neq 0$. Hence, the coefficient matrix is nonsingular, and the mapping $W\mapsto\beta$ is pointwise invertible.
\end{remark}

Theorem~\ref{thm:dEs_area_const} characterizes area-preserving diffeomorphisms as precisely the critical points of $\sE_\rS$. This motivates seeking area preservation by driving $f$ toward a critical point along the negative $L^2$-gradient of $\sE_\rS$, which we term the \emph{authalic flow}.

\subsection{Authalic flow to area preservation}
\label{sec:2.3}

Let $V\in\Gamma(T\N)$ be a smooth vector field satisfying $V|_{\partial\N}=0$ and let $\phi_t$ be the local flow induced by $V$.
Recall that by \eqref{eq:Lie_1} and \eqref{eq:Lie_3}, we have
\[
\delta J_f \d A_g = \delta (f^* \d A_h) = f^* (\mathcal{L}_V \d A_h).
\]
By~\eqref{eq:Lie_2}, the Lie derivative can be written as the divergence~\cite[Chapter 16, page 423]{Lee13}: 
\[
\mathcal{L}_V \d A_h = \d(\iota_V \d A_h) = (\operatorname{div}_h V)\,\d A_h.
\]
Hence, we obtain
\begin{align*}
\delta J_f \d A_g  = f^*(\operatorname{div}_h V \, \d A_h)
&= f^*(\operatorname{div}_h V ) ~f^*( \d A_h).
\end{align*}
Substituting into~\eqref{eq:dEs_1}, we have
\begin{align*}
\delta \sE_\rS(f)[W] &= 2 \int_\M J_f \, (\delta J_f \, \d A_g)
= 2 \int_\M J_f \, f^*(\operatorname{div}_h V ) \,f^*( \d A_h).
\end{align*}

To work intrinsically on $\N$, we define the pushforward area ratio
\[
\rho_f := J_f \circ f^{-1}.
\]
Changing variables from $x\in\M$ to $y = f(x)\in\N$ yields
\begin{align*}
\delta \sE_\rS(f)[W]
= 2 \int_\M J_f \, f^*(\operatorname{div}_h V ) \,f^*( \d A_h)
= 2\int_\N \rho_f\,\operatorname{div}_h V\,\d A_h.
\end{align*}
Applying the product rule for divergence~\cite[Exercise 16.12]{Lee13} gives
\[
\operatorname{div}_h(\rho_f V) = \langle\nabla_h\rho_f,\,V\rangle_h + \rho_f\,\operatorname{div}_h V,
\]
and integrating over $\N$ yields
\[
\delta \sE_\rS(f)[W]
= 2 \int_\N \operatorname{div}_h (\rho_f V) \, \d A_h 
- 2 \int_\N \langle \nabla_h \rho_f, \, V \rangle_h \, \d A_h.
\]
Since $V|_{\partial\N} = 0$, the divergence theorem~\cite[Theorem 16.32]{Lee13} implies
\[
\int_\N \operatorname{div}_h(\rho_f V)\,\d A_h
= \int_{\partial\N} \rho_f\langle V,\,n\rangle_h\,\d s_h = 0,
\]
where $n$ is the outward unit normal along $\partial\N$. Therefore, we obtain
\begin{equation}  \label{eq:grad_var_E}
\delta \sE_\rS(f)[W] =  -2 \int_\N \langle \nabla_h \rho_f, \, V \rangle_h \, \d A_h.
\end{equation}

The $L^2$-gradient of $\sE_\rS$ is defined by the $L^2$-inner product, given by
\[
\delta \sE_\rS(f)[W] = \langle \nabla_{L^2} \sE_\rS(f), \, W \rangle_{L^2(\M)}
= \int_\M \langle \nabla_{L^2} \sE_\rS(f), \, W \rangle_{h}\, \d A_g.
\]
By changing variables to $\N$ and comparing with~\eqref{eq:grad_var_E}, we have
\[
-2 \int_\N \langle \nabla_h \rho_f, \, V \rangle_h \, \d A_h = \int_\N \langle \nabla_{L^2} \sE_\rS(f) \circ f^{-1}, \, V \rangle_{h}\, \frac{\d A_h}{\rho_f}.
\]
This gives the pushforward $L^2$-gradient:
\[
\nabla_{L^2} \sE_\rS(f)\circ f^{-1}
= -2\,\rho_f \,\nabla_h \rho_f = -\nabla_h(\rho_f^2),
\]
which is exactly the negative gradient of $\rho_f^2$.
Hence, by pulling back to $\M$, we have
\[
 \nabla_{L^2} \sE_\rS(f) = -2\,J_f (\nabla_h \rho_f) \circ f, \quad \rho_f = J_f \circ f^{-1}.
\]

Motivated by the negative $L^2$-gradient direction, we formally define the \emph{authalic flow} by
\begin{equation} \label{eq:L2_grad_flow}
\begin{cases}
\partial_t f_t
=2\,J_{f_t}(\nabla_h\rho_t)\circ f_t
&\text{in }\operatorname{int}(\M),\\
f_t=f_0,
& \text{on $\partial\M$},
\end{cases}
\end{equation}
where $\rho_t := J_{f_t} \circ f_t^{-1}$. Conditional on the existence of a smooth, orientation-preserving diffeomorphic solution on an interval $[0,T)$, the stretch energy decreases monotonically:
\begin{align*}
\frac{\d}{\d t}\sE_\rS(f_t) 
&= - \int_\M \| \nabla_{L^2}  \sE_\rS(f_t)\|_h^2 \,\d A_g 
= - \int_\N \| \nabla_{L^2}  \sE_\rS(f_t) \circ f_t^{-1} \|_h^2 \, \frac{\d A_h}{\rho_t} \\
&= -4 \int_\N \rho_t \| \nabla_h \rho_t \|^2_h  \, \d A_h \leq 0
\end{align*}
and the equality holds if and only if $\nabla_h\rho_t = 0$, i.e., $\rho_t$ is constant, which agrees with Theorem~\ref{thm:dEs_area_const} under normalization $|\M| = |\N|$.

By taking $\N$ to be a canonical target surface, the flow equation~\eqref{eq:L2_grad_flow} provides a descent method toward an area-preserving parameterization. If the flow converges to a smooth critical point, Theorem~\ref{thm:dEs_area_const} implies that the limiting map is area-preserving. Discretizing the flow in time then yields a natural iterative scheme for computing such a parameterization.
However, in practice, surfaces in computer graphics are commonly represented as triangular meshes, with maps approximated by piecewise affine maps. The loss of differentiability of such maps precludes a direct implementation of the authalic flow. To address this, we introduce a discrete analogue of the stretch energy and derive its discrete gradient, from which a discrete authalic flow follows.

\section{Simplicial formulation of discrete stretch energy}
\label{sec:3}
In this section, we discretize the stretch energy \eqref{eq:Es} from diffeomorphisms to simplicial maps. We begin by establishing notation for simplicial surfaces and then derive the discrete energy and its gradient.

\subsection{Simplicial surfaces and simplicial mappings}
\label{sec:3.1}
A smooth surface $\M \subset \R^3$ can be approximated by a \emph{simplicial (triangular) surface} $\Mh$, characterized by a set of vertices $\V$, 
oriented triangular faces $\F$, and edges $\E$:
\begin{align*}
\V &= \left\{ v_\ell = (v_\ell^1, v_\ell^2, v_\ell^3) \in\R^3 \right\}_{\ell=1}^n, \\
\F &= \left\{ [v_i,v_j,v_k] \mid [v_i,v_j,v_k]:
\text{oriented face of }\Mh \right\}, \\
\E &= \left\{ [v_i,v_j] \subset\R^3 \mid [v_i,v_j,v_k]\in\F  \right\}.
\end{align*}
The source-mesh size is defined by
\[
h := \max_{\tau\in\F} \diam{\tau}.
\]

A \emph{simplicial map} $f$ sends the source mesh $\Mh$ to the image mesh $\Nh:=f(\Mh) \subset \R^3$, where the restriction $f|_\tau$ to any triangle $\tau$ is an affine map. In particular, $f(\tau)$ remains a triangle whenever $f$ is non-degenerate on every face. The image-mesh size is defined by
\[
k_h := \max_{\tau\in\F} \diam{f(\tau)}.
\]

A simplicial map is completely determined by its values at the vertices:
\[
f_i := f(v_i) = (f_i^1, f_i^2, f_i^3)^\top, \qquad v_i \in \V.
\]
More precisely, the restriction of $f$ to $\tau = [v_i, v_j, v_k]$ admits the barycentric representation:
\[
f|_{\tau}(p)
= \lambda_i^\tau (p)\,f_i + \lambda_j^\tau (p)\,f_j + \lambda_k^\tau (p)\,f_k,
\]
where the barycentric coordinates are given by area ratios:
\begin{equation}
\lambda_i^\tau (p)=\frac{|[p,v_j,v_k]|}{|\tau|},\qquad
\lambda_j^\tau (p)=\frac{|[v_i,p,v_k]|}{|\tau|},\qquad
\lambda_k^\tau (p)=\frac{|[v_i,v_j,p]|}{|\tau|},
\label{eq:bary_coord}
\end{equation}
with $|\tau|$ denoting the area of $\tau$. Equivalently, $f$ can be expressed globally using the piecewise linear basis functions $\{\varphi_i\}_{i=1}^n$ on $\Mh$:
\begin{equation}
f(p) = \sum_{i=1}^n f_i \varphi_i(p),
\quad \text{where} \quad
\varphi_i(p) = 
\begin{cases} 
\lambda_i^\tau(p), & p \in \tau, \\
0, & \text{otherwise}.
\end{cases}
\label{eq:basis_fun}
\end{equation}
Accordingly, the simplicial map can be represented by the vertex coordinate vectors
\begin{equation}
\mathbf{f}^1 = \begin{bmatrix} f_1^1 \\ \vdots \\ f_n^1 \end{bmatrix}, \quad
\mathbf{f}^2 = \begin{bmatrix} f_1^2 \\ \vdots \\ f_n^2 \end{bmatrix}, \quad
\mathbf{f}^3 = \begin{bmatrix} f_1^3 \\ \vdots \\ f_n^3 \end{bmatrix}, \quad 
\f = \begin{bmatrix}
    \f^1 & \f^2 & \f^3
\end{bmatrix}.
\label{eq:f_matrix}
\end{equation}

\subsection{Discrete stretch energy for simplicial maps}
\label{sec:3.2}
Let $\Mh \subset \R^3$ be a triangular surface and $f:\Mh \to \Nh$ be an orientation-preserving simplicial map.
The total areas of the source and image meshes are denoted
\begin{equation*}
|\Mh| = \sum_{\tau \in \F} |\tau|, \qquad |\Nh|= \sum_{\tau \in \F} |f(\tau)|.
\end{equation*}
Since $f$ is affine on each triangle $\tau \in \F$, the area ratio $J_{f|_\tau}$ defined in \eqref{eq:area_ratio} is constant on each face and satisfies
\begin{equation} \label{eq:f_tau}
|f(\tau)| = \int_\tau f^* \d A = \int_\tau J_{f|_\tau} \d A =  J_{f|_\tau} \, |\tau|.
\end{equation}
In this discrete setting, we say $f$ is \emph{area-preserving} if $J_{f|_\tau} = 1$ for all $\tau \in \F$.

The stretch energy extends naturally to simplicial maps by integrating on $\Mh$
\begin{equation}
E_\rS(f) 
:= \int_\Mh J_f^2 \, \d A
= \sum_{\tau \in \F}\int_\tau  J_{f|_\tau}^2\, \d A 
= \sum_{\tau \in \F} \int_\tau \frac{|f(\tau)|^2}{|\tau|^2} \, \d A 
= \sum_{\tau \in \F} \frac{|f(\tau)|^2}{|\tau|},
\label{eq:Es_Geo}
\end{equation}
referred to as \textit{discrete stretch energy}.
As in the smooth case \eqref{eq:Es_var}, the discrete stretch energy admits an analogous interpretation as a variance of area ratios.

\begin{corollary}  \label{cor:Es_var}
Let $f$ be an orientation-preserving simplicial map with $|\Mh| = |\Nh|$. Then the discrete stretch energy \eqref{eq:Es_Geo} satisfies
\begin{equation} \label{eq:Es_var_discrete}
\mathrm{Var} \bigg( \frac{|f(\tau)|}{|\tau|} \bigg) = \frac{1}{|\Mh|} \Big( E_\rS(f) - |\Mh| \Big),
\end{equation}
where the variance is taken with respect to the discrete probability measure that assigns mass $|\tau|/|\Mh|$ to each triangle $\tau$. 
In particular, $E_\rS(f) \geq |\Mh|$, with equality if and only if $f$ is area-preserving.
\end{corollary}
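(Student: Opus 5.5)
The plan is to mirror the smooth computation \eqref{eq:Es_var} at the level of the discrete probability measure. Write $r_\tau := |f(\tau)|/|\tau| = J_{f|_\tau}$ for the area ratio on each face, and let $P(\tau) := |\tau|/|\Mh|$. These weights are nonnegative and sum to $1$, so $P$ is a probability measure on $\F$.

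\textbf{Step 1: the mean.} Compute
\[
\mathbb{E}(r) = \sum_{\tau\in\F} \frac{|f(\tau)|}{|\tau|}\cdot\frac{|\tau|}{|\Mh|} = \frac{|\Nh|}{|\Mh|} = 1,
\]
using the hypothesis $|\Mh| = |\Nh|$. Orientation preservation is used here to guarantee that each $|f(\tau)|$ is a genuine positive area. Then $|\Nh| = \sum_\tau |f(\tau)|$ is exactly the image area, with no cancellation from folded faces.

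\textbf{Step 2: the second moment.} Compute
\[
\mathbb{E}(r^2) = \sum_{\tau} \frac{|f(\tau)|^2}{|\tau|^2}\cdot\frac{|\tau|}{|\Mh|} = \frac{1}{|\Mh|}\sum_\tau \frac{|f(\tau)|^2}{|\tau|} = \frac{E_\rS(f)}{|\Mh|},
\]
where the last equality is the closed form of the discrete stretch energy in \eqref{eq:Es_Geo}.

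\textbf{Step 3: the variance identity and the inequality.} The identity $\mathrm{Var}(r) = \mathbb{E}(r^2) - \mathbb{E}(r)^2$ then gives \eqref{eq:Es_var_discrete} immediately. Since a variance is nonnegative and $|\Mh| > 0$, it follows that $E_\rS(f) \ge |\Mh|$.

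\textbf{Step 4: the equality case.} Equality holds if and only if $\mathrm{Var}(r) = 0$, that is, $\sum_\tau P(\tau)(r_\tau - 1)^2 = 0$. Every nondegenerate face has $P(\tau) > 0$, so this forces $r_\tau = 1$ for all $\tau$, which is precisely the definition of area-preserving. Conversely, if $r_\tau \equiv 1$, then $E_\rS(f) = \sum_\tau |\tau| = |\Mh|$.

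No step is a real obstacle. The only point needing care is Step 4, where one must note that all source faces have positive area so that zero variance forces every ratio to equal $1$. One should also note that orientation preservation is what makes $|\Nh|$ the correct normalizing total.
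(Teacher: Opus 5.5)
Your proposal is correct and follows the paper's proof essentially step for step: compute the mean of the area ratio as $|\Nh|/|\Mh| = 1$, express the second moment through \eqref{eq:Es_Geo}, apply $\mathrm{Var} = \mathbb{E}(r^2) - \mathbb{E}(r)^2$, and use nonnegativity of the variance. Your equality-case argument is slightly more explicit than the paper's: you note that every source face has positive weight $|\tau|/|\Mh|$, so zero variance forces each ratio to equal $1$.
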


\begin{proof}
Since $|\Mh| = |\Nh|$, the expected value with respect to the measure $|\tau|/|\Mh|$ is
$$
\mathbb{E}(J_{f|_\tau})
= \sum_{\tau \in \F} \frac{|\tau|}{|\Mh|}\frac{|f(\tau)|}{|\tau|}
= \frac{|\Nh|}{|\Mh|}
= 1.
$$
Hence, the variance satisfies
\begin{align*}
\mathrm{Var} ( J_{f|_\tau}) 
&= \mathbb{E}( J_{f|_\tau}^2) - \mathbb{E}( J_{f|_\tau})^2
= \mathbb{E}( J_{f|_\tau}^2) - 1 \\
&= \sum_{\tau \in \F} \frac{|\tau|}{|\Mh|} \frac{|f(\tau)|^2}{|\tau|^2} - 1
= \frac{1}{|\Mh|} \Big( E_\rS(f) - |\Mh| \Big).
\end{align*}
Since the variance is nonnegative, $E_\rS(f) \geq |\Mh|$ and equality holds if and only if $J_{f|_\tau}=1$ for every triangle $\tau$.
\end{proof}

\subsection{Gradients of the discrete stretch energy}
\label{sec:3.3}
From the work of Pinkall and Polthier \cite{PiPo93}, the Dirichlet energy for a simplicial map is discretized by the cotangent formula:
\begin{equation}
E_\rD(f) 
= \frac{1}{2} \int_\Mh \| \nabla f\|^2 \,\d A 
= \frac{1}{2}\sum_{\tau \in \F} \int_\tau \| \nabla f\|^2 \,\d A 
= \frac{1}{2} \sum_{s = 1}^3 {\f^s}^{\top} L_\rD \, \f^s,
\label{eq:Ed_quad}
\end{equation}
where $L_\rD$ is the cotangent-weighted Laplacian matrix on $\Mh$, defined by
\begin{equation} \label{eq:Ld}
[L_\rD]_{i, j} = 
\begin{cases}
    -\frac{1}{2} \sum_{ \tau\supset[v_i,v_j]}\cot\theta_{ij}^{\tau} &\text{if }i\neq j\text{ and }[v_i,v_j]\in\E,\\
    -\sum_{\ell \neq i} [L_\rD]_{i, \ell}     &\text{if}~j=i,\\
    0       &\text{otherwise},
\end{cases}
\end{equation}
and $\theta_{ij}^{\tau}$ denotes the angle of $\tau$ opposite the
edge $[v_i,v_j]$.

This discretization directly gives the gradient with respect to $\f$, and the associated harmonic map, which is the critical point of $E_\rD(f)$, can be obtained by solving a sparse linear system. Similarly, the stretch energy $E_{\rS}(f)$ can be expressed using modified cotangent weights. We first establish the following lemma.

\begin{lemma} \label{lma:ImgArea}
Let $f:\Mh \to \Nh$ be an orientation-preserving simplicial map, and let $L_\rD(f)$ denote the cotangent-weighted Laplacian on the image mesh $\Nh$, defined by \eqref{eq:Ld}. Then,
\begin{equation}
|\Nh| = \frac{1}{2} \sum_{s=1}^3 {\f^s}^{\top} L_\rD(f) \, \f^s,
\label{eq:ImgArea}
\end{equation}
and
\begin{equation} \label{eq:ImgArea_grad}
\nabla_{\f^s} |\Nh| = L_\rD(f) \, \f^s, \qquad s = 1,2,3.
\end{equation}
\end{lemma}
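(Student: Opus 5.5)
The plan is to reduce both identities to a single triangle and then sum over $\F$. The key observation is that the cotangent formula \eqref{eq:Ed_quad}, applied on the image mesh $\Nh$, computes the Dirichlet energy of the identity map of $\Nh$. Each coordinate function $y^s$ is affine on every image triangle $f(\tau)$, so $\|\nabla_{\Nh} \mathrm{id}\|^2=\sum_s\|\nabla_{\Nh} y^s\|^2$ equals the squared norm of the tangential projection of the ambient identity. That projection is the orthogonal projector onto the plane of $f(\tau)$, whose squared Frobenius norm is $2$. Therefore
\[
\frac12\sum_{s=1}^3 {\f^s}^\top L_\rD(f)\,\f^s=\frac12\int_{\Nh}\sum_{s=1}^3\|\nabla y^s\|^2\,\d A=\frac12\sum_{\tau\in\F}2\,|f(\tau)|=|\Nh|,
\]
which is \eqref{eq:ImgArea}. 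As a cross-check, and to prepare for the gradient, I would also verify this per triangle. Write $\theta_i,\theta_j,\theta_k$ for the angles of $f(\tau)$ and use the identity
\[
\tfrac14\left(\cot\theta_i\|f_j-f_k\|^2+\cot\theta_j\|f_k-f_i\|^2+\cot\theta_k\|f_i-f_j\|^2\right)=|f(\tau)|.
\]
This follows from $\cot\theta_i\,\|f_j-f_k\|^2 = \|f_j-f_k\|^2\,\frac{\langle f_j-f_i,f_k-f_i\rangle}{2|f(\tau)|}$ together with the law-of-cosines expansion $\sum_i \|f_j-f_k\|^2\langle f_j-f_i,f_k-f_i\rangle = 4|f(\tau)|^2\cdot 2$.

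For the gradient \eqref{eq:ImgArea_grad}, the difficulty is that $L_\rD(f)$ itself depends on $\f$. Naive differentiation of the quadratic form gives $L_\rD(f)\f^s+\frac12\sum_t {\f^t}^\top(\partial L_\rD/\partial \f^s)\f^t$. I would avoid computing that second term by differentiating the area directly, triangle by triangle. On $f(\tau)=[f_i,f_j,f_k]$ with unit normal $n_\tau$,
\[
|f(\tau)|=\tfrac12\|(f_j-f_i)\times(f_k-f_i)\|,\qquad \nabla_{f_i}|f(\tau)|=\tfrac12\, n_\tau\times(f_k-f_j).
\]
This vector is the in-plane inward-pointing vector of length $\frac12\|f_k-f_j\|$ perpendicular to the opposite edge. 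Using the standard decomposition of that perpendicular in terms of the adjacent edges,
\[
\tfrac12\, n_\tau\times(f_k-f_j)=\tfrac12\left(\cot\theta_k (f_i-f_j)+\cot\theta_j(f_i-f_k)\right),
\]
which I would verify by taking inner products with $f_k-f_j$ and with $n_\tau\times(f_k-f_j)$.

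Summing over all triangles containing $v_i$ then gives $\nabla_{f_i}|\Nh|=\sum_j \tfrac12\sum_{\tau\supset[v_i,v_j]}\cot\theta^\tau_{ij}(f_i-f_j)$. By \eqref{eq:Ld}, this is exactly row $i$ of $L_\rD(f)\f^s$ in each coordinate $s$. An alternative route is Euler's relation: $|\Nh|$ is homogeneous of degree $2$ in $\f$, so $\f\cdot\nabla|\Nh|=2|\Nh|$. Combined with the gradient formula, this reproves \eqref{eq:ImgArea}, so \eqref{eq:ImgArea} could instead be obtained as a corollary of the gradient computation.

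The main obstacle I expect is the cotangent identity for the in-plane normal, together with careful orientation and sign bookkeeping, i.e.\ ensuring $n_\tau$ is consistent with the orientation of $f$. Orientation preservation guarantees that each $f(\tau)$ is nondegenerate, so the angles and cotangents of the image triangles are well defined. The rest of the argument is assembly.
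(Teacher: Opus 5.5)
Your proposal is correct and follows essentially the same route as the paper: it obtains \eqref{eq:ImgArea} as the Dirichlet energy of the identity on $\Nh$ (your observation that the tangential projector has squared Frobenius norm $2$ is the paper's $\|I_2\|^2$). It obtains \eqref{eq:ImgArea_grad} by differentiating each triangle area directly, identifying $\nabla_{f_i}|f(\tau)|$ as the in-plane perpendicular to the opposite edge of length $\frac12\|f_k-f_j\|$, and decomposing it into cotangent-weighted edge vectors. The paper does that decomposition with the foot of the perpendicular and signed distances along the edge, while you verify it by inner products and the law of sines, a cosmetic difference; your Heron cross-check and Euler-homogeneity remark are valid but not needed.
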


\begin{proof}
The image area can be written as
\begin{align*}
|\Nh| = \frac{1}{2}\int_{\Nh} \| I_2\|^2 \, \d A,
\end{align*}
which is precisely the Dirichlet energy of the identity map on $\Nh$. Therefore, by \eqref{eq:Ed_quad}, we have
\[
|\Nh| = E_\rD(\mathrm{id}_{\Nh}) = \frac{1}{2} \sum_{s=1}^3 {\f^s}^{\top} L_\rD(f) \, \f^s,
\]
which proves \eqref{eq:ImgArea}.

Next, we rewrite the gradient as 
\[
\begin{bmatrix}
    \nabla_{\f^1} |\Nh| & \nabla_{\f^2} |\Nh|   & \nabla_{\f^3} |\Nh|
\end{bmatrix} = 
\begin{bmatrix}
    \nabla_{f_1} |\Nh|^\top \\
    \vdots \\
    \nabla_{f_n} |\Nh|^\top
\end{bmatrix} \in \mathbb{R}^{n \times 3},
\]
and consider the gradient with respect to $f_i$. The orientation preservation of $f$ gives $|\Nh| = \sum |f(\tau)|$ and
\[
\nabla_{f_i} |\Nh| = \sum_{\tau \ni v_i} \,\nabla_{f_i} |f(\tau)|.
\]
Thus, the claim \eqref{eq:ImgArea_grad} is equivalent to
\begin{align*}
\nabla_{f_i} |f(\tau)| 
&= [L_\rD(f|_\tau)]_{i,i} \,f_i + [L_\rD(f|_\tau)]_{i,j} \,f_j + [L_\rD(f|_\tau)]_{i,k} \,f_k\\
&= \frac{1}{2} \cot \theta_{i,j}^k (f) (f_i - f_j) + \frac{1}{2} \cot \theta_{k,i}^j(f) (f_i - f_k),
\end{align*}
where $\theta_{i,j}^k(f)$ denotes the angle at vertex $f_k$ opposite the edge $[f_i,f_j]$.

We follow the geometric argument in \cite{Cran19}. We let $f_p$ be the foot of the perpendicular from $f_i$ to the line containing the edge $[f_j, f_k]$. Since translating $f_i$ along this line does not change $|f(\tau)|$, the gradient is parallel to $f_i - f_p$. Moreover, since the area depends affinely on $f_i$ along this direction, we have
\[
\nabla_{f_i} |f(\tau)|
= \frac{|[f_i, f_j, f_k]|}{ |[f_p, f_i]|} \, \frac{f_i - f_p}{|[f_p, f_i]|}
= \frac{1}{2} \frac{|[f_j, f_k]|}{|[f_p, f_i]|} \, (f_i - f_p).
\]
Let $s(a,b)$ denote the signed distance along the oriented line through $[f_j,f_k]$, defined by
\[
s(a, b) = (b-a) \cdot \frac{f_k - f_j}{|[f_j, f_k]|}. 
\]
Since $s(f_j, f_p) + s(f_p, f_k) = |[f_j, f_k]|$, the point $f_p$ can be represented as
\begin{align*}
f_p &= f_j + s(f_j, f_p) \, \frac{f_k - f_j}{|[f_j, f_k]|}
= \frac{s(f_p, f_k)}{|[f_j, f_k]|} f_j + \frac{s(f_j, f_p)}{|[f_j, f_k]|} f_k,
\end{align*}
and 
\[
f_i - f_p = \frac{s(f_p, f_k)}{|[f_j, f_k]|} (f_i -  f_j) + \frac{s(f_j, f_p)}{|[f_j, f_k]|} (f_i - f_k).
\]
Hence, the gradient can be rewritten as
\begin{align*}
\nabla_{f_i} |f(\tau)| &= \frac{1}{2} \frac{|[f_j, f_k]|}{|[f_p, f_i]|} \, (f_i - f_p)
= \frac{s(f_p, f_k)}{2\,|[f_p, f_i]|} (f_i -  f_j) + \frac{s(f_j, f_p)}{2\,|[f_p, f_i]|} (f_i - f_k)\\
&= \frac{1}{2} \cot \theta_{i,j}^k(f) \, (f_i - f_j)
+  \frac{1}{2}  \cot \theta_{k,i}^j(f) \, (f_i - f_k),
\end{align*}
which proves the claim.
\end{proof}

Lemma~\ref{lma:ImgArea} gives the following gradient formula for discrete stretch energy.

\begin{theorem} \label{thm:Es_quad}
Let $f:\Mh \to \R^3$ be an orientation-preserving simplicial map. Then, the discrete stretch energy \eqref{eq:Es_Geo} admits the representation
\begin{equation}
E_\rS(f) = \frac{1}{2} \sum_{s=1}^3 {\f^s}^{\top} L_\rS(f) \,\f^s,
\label{eq:Es_quad}
\end{equation}
where $L_\rS(f)$ is the stretch Laplacian, assembled facewise as
\begin{equation}
L_\rS(f) = \sum_{\tau \in \F} L_\rS(f|_\tau),
\qquad
L_\rS(f|_\tau) := \frac{|f(\tau)|}{|\tau|} L_\rD(f|_\tau).
\label{eq:Ls}
\end{equation}
Moreover, the gradient of $E_\rS(f)$ with respect to $\f^s$ is
\begin{equation}  \label{eq:Es_grad}
\nabla_{\f^s} E_\rS(f) = 2 \, L_\rS(f) \, \f^s, \quad s = 1,2,3.
\end{equation}
\end{theorem}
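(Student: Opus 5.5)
The plan is to work face by face and apply Lemma~\ref{lma:ImgArea} to a single triangle. For a fixed face $\tau\in\F$, applying \eqref{eq:ImgArea} to the one-triangle mesh $f(\tau)$ gives
\[
|f(\tau)| = \frac12\sum_{s=1}^3 {\f^s}^\top L_\rD(f|_\tau)\,\f^s .
\]
Here $L_\rD(f|_\tau)$ is the local cotangent matrix of $f(\tau)$, padded by zeros to $n\times n$. This local matrix depends only on the three vertex images $f_i,f_j,f_k$.

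\textbf{Representation \eqref{eq:Es_quad}.} Multiplying the face identity by $|f(\tau)|/|\tau|$ gives
\[
\frac{|f(\tau)|^2}{|\tau|} = \frac12\sum_s {\f^s}^\top \frac{|f(\tau)|}{|\tau|}L_\rD(f|_\tau)\,\f^s .
\]
Summing over $\tau$ and using \eqref{eq:Es_Geo} together with the definition \eqref{eq:Ls} yields \eqref{eq:Es_quad} immediately. Orientation preservation ensures $|f(\tau)|>0$, so the area ratio has no sign ambiguity and \eqref{eq:Es_Geo} applies as written.

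\textbf{Gradient \eqref{eq:Es_grad}.} I will not differentiate the quadratic form, since $L_\rS(f)$ itself depends on $\f$. Instead I differentiate \eqref{eq:Es_Geo} directly with the chain rule:
\[
\nabla_{\f^s}E_\rS(f)=\sum_{\tau}\frac{2|f(\tau)|}{|\tau|}\,\nabla_{\f^s}|f(\tau)| .
\]
For the factor $\nabla_{\f^s}|f(\tau)|$ I use the facewise version of \eqref{eq:ImgArea_grad}. The proof of Lemma~\ref{lma:ImgArea} actually establishes the per-triangle formula $\nabla_{f_i}|f(\tau)| = \sum_{m\in\{i,j,k\}}[L_\rD(f|_\tau)]_{i,m}f_m$, so in matrix form $\nabla_{\f^s}|f(\tau)| = L_\rD(f|_\tau)\,\f^s$. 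Substituting this gives
\[
\nabla_{\f^s}E_\rS(f) = 2\sum_\tau \frac{|f(\tau)|}{|\tau|}L_\rD(f|_\tau)\f^s = 2L_\rS(f)\f^s .
\]

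The main point to handle carefully is that \eqref{eq:Es_grad} is not the naive derivative $L_\rS\f^s$ of a quadratic form with constant matrix. I will check the factor $2$ explicitly using Euler's relation. Each $|f(\tau)|$ is homogeneous of degree $2$ in $\f$, so $E_\rS$ is homogeneous of degree $4$. Pairing the gradient with $\f$ should therefore give $4E_\rS$. Indeed,
\[
\sum_s {\f^s}^\top\, 2L_\rS(f)\f^s = 4E_\rS(f),
\]
which confirms the constant. Beyond that, the only step to justify is extracting the per-face gradient identity from Lemma~\ref{lma:ImgArea}. It is already proven there, since the lemma's proof explicitly reduces the global statement to the local one.
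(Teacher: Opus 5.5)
Your proposal is correct and matches the paper's proof step for step. You apply Lemma~\ref{lma:ImgArea} facewise, weight by $|f(\tau)|/|\tau|$ and sum to get \eqref{eq:Es_quad}, then apply the chain rule to $|f(\tau)|^2/|\tau|$ with the facewise gradient $L_\rD(f|_\tau)\f^s$ to get \eqref{eq:Es_grad}; your Euler-homogeneity check of the factor $2$ is a sound extra sanity test that the paper does not include.
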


\begin{proof}
For each face $\tau \in \F$, Lemma~\ref{lma:ImgArea} applied to $f|_\tau$ gives
\[
|f(\tau)| = \frac{1}{2} \sum_{s=1}^3 {\f^s}^{\top} L_\rD(f|_\tau) \, \f^s.
\]
Therefore, we have
\begin{align*}
E_\rS(f) 
&= \sum_{\tau \in \F} \frac{|f(\tau)|^2}{|\tau|} 
= \frac{1}{2} \sum_{\tau \in \F} \frac{|f(\tau)|}{|\tau|}  \sum_{s=1}^3 {\f^s}^{\top} L_\rD(f|_\tau) \f^s \\
&= \frac{1}{2} \sum_{s=1}^3 {\f^s}^{\top} L_\rS(f) \, \f^s,
\end{align*}
which proves \eqref{eq:Es_quad}.

For the gradient, applying \eqref{eq:ImgArea_grad} to $f|_\tau$ gives
\[
\nabla_{\f^s} |f(\tau)| = L_\rD(f|_\tau) \, \f^s, \qquad s = 1,2,3.
\]
Hence, by the chain rule,
\begin{align*}
\nabla_{\f^s} E_\rS(f) 
&= \sum_{\tau \in \F} \,\nabla_{\f^s} \left(\frac{|f(\tau)|^2}{|\tau|}\right) 
= 2\sum_{\tau \in \F} \frac{|f(\tau)|}{|\tau|} \, \nabla_{\f^s} |f(\tau)|\\
&= 2\sum_{\tau \in \F} \frac{|f(\tau)|}{|\tau|} \, L_\rD(f|_\tau) \, \f^s 
= 2\,L_\rS(f)\,\f^s, \nonumber
\end{align*}
which proves \eqref{eq:Es_grad}.
\end{proof}

We note that this proof is simpler and more transparent than the one in \cite[Lemma~3.1, Theorem~3.5]{Yueh23}. With the explicit gradient formula for $E_\rS$ with respect to $\f$, we then derive the discrete analog of the authalic flow.

\section{Discrete authalic flow for simplicial maps}
\label{sec:4}

In this section, we propose the discrete counterpart of the authalic flow~\eqref{eq:L2_grad_flow} and then adapt it to area-preserving parameterizations of both closed and open surfaces.

\subsection{Discrete authalic flow}
\label{sec:4.1}
Recall that the authalic flow~\eqref{eq:L2_grad_flow} is the $L^2$-gradient flow of the stretch energy $\sE_\rS$ projected onto the tangent space of the target. Analogously, we define the \emph{discrete authalic flow} for simplicial maps as the ambient $L^2$-gradient flow of the discrete stretch energy $E_\rS$ with tangential projection:
\begin{equation} \label{eq:L2_grad_flow_disc}
\partial_t \f_t = - \Pi_{T\N} \big(\nabla_{L^2} E_\rS(\f_t) \big),
\end{equation}
where $\Pi_{T\mathcal{N}}$ denotes the projection onto the tangent space of $\N$.

We first compute the ambient $L^2$-gradient of $E_\rS$, defined by
\begin{equation}  \label{eq:L2_grad}
\frac{\d}{\d t} \bigg|_{t = 0} E_\rS(\f + t \y) = 
\langle \nabla_{L^2} E_\rS(\f), \, \y \rangle_{L^2(\Mh)} 
\end{equation}
for every test simplicial map $\y \in \R^{n \times 3}$. The left-hand side of \eqref{eq:L2_grad} is the directional derivative of $E_\rS$ along $\y$, which can be written as
\begin{align}
\frac{\d}{\d t} \bigg|_{t = 0}  E_\rS(\f + t \y) 
 =  \sum_{s = 1}^3 \big(\nabla_{\f^s} E_\rS(\f) \big)^\top \y^s. 
\label{eq:L2_grad_1}
\end{align}
For the right-hand side of \eqref{eq:L2_grad}, we set $\g = \nabla_{L^2} E_\rS(\f)$ and expand both $\g$ and $\y$ in the hat basis~\eqref{eq:basis_fun}:
$\g = \sum_{i=1}^n g_i \varphi_i$ and $\y = \sum_{j=1}^n y_j \varphi_j$. The $L^2$-inner product then becomes
\begin{align}  \label{eq:L2_grad_2}
\int_\Mh \bigg\langle \sum_{j=1}^n y_j \varphi_j,\,\sum_{i=1}^n g_i \varphi_i \bigg\rangle\,\d A
&= \sum_{i,j=1}^n \sum_{s=1}^3
(g_i^s  \,y_j^s ) \int_\Mh \varphi_i\,\varphi_j\,\d A
= \sum_{s=1}^3 {\g^s}^\top \widetilde M \,\y^s,
\end{align}
where $\widetilde M$ is a consistent mass matrix \cite[Equation 3.66]{LaBe13}, whose entries are
\begin{subequations} \label{eq:mass}
\begin{equation} 
\widetilde M_{i,j} = \int_\Mh \varphi_i\, \varphi_j \,\d A 
=
\begin{cases}
	\sum_{\tau\supset [v_i, v_j]} \frac{|\tau|}{12}, & \text{if $i\neq j$ } \\
    \sum_{\tau \ni v_i} \frac{|\tau|}{6}, & \text{if $i=j$,} \\
	0, & \text{otherwise}.
\end{cases}
\end{equation}
To simplify the computation, we instead use the lumped mass matrix \cite[Equation 5.74]{LaBe13}
\begin{equation}
M_{i,i} = \sum_{j=1}^n \widetilde{M}_{i,j}.
\end{equation}
\end{subequations}
Replacing the exact $L^2$-inner product in~\eqref{eq:L2_grad_2} with its mass-lumped approximation is equivalent to replacing $\widetilde M$ with $M$. Thus, we use $\g$ to denote the lumped discrete $L^2$-gradient defined by
\begin{equation} \label{eq:L2_grad_vec}
\sum_{s=1}^3 {\g^s}^\top M\,\y^s =
\sum_{s=1}^3 \big( \nabla_{\f^s}E_{\rS}(\f) \big)^\top \y^s
\end{equation}
for every $\y \in \R^{n \times 3}$.

Hence, the lumped discrete $L^2$-gradient $\g$ is uniquely determined by~\eqref{eq:L2_grad_vec}.
It provides the flow direction in~\eqref{eq:L2_grad_flow_disc} for the discrete spatial variables after projection onto the tangent space of the target. We then discretize the flow in time and implement it for the parameterization of both open and closed triangular surfaces.

\subsection{Closed surfaces}
\label{sec:4.2}

Let $\Mh$ be a closed surface, and let $\N \subset \R^3$ denote a canonical smooth target surface, such as the unit sphere for genus-zero surfaces and a torus for genus-one surfaces. We seek a simplicial map $f:\Mh\to\R^3$ whose image $\Nh := f(\Mh)$ is a piecewise linear approximation of $\N$. In practice, we enforce this by requiring $f(v) \in \N$ for every vertex $v \in \V$.

We initialize $f^{(0)}$ as a conformal map whose vertices lie on $\N$. For genus-zero surfaces, we compute a spherical conformal map by solving a harmonic map on the extended complex plane and then applying inverse stereographic projection \cite{HaAT00}. For
genus-one surfaces, we compute a toroidal conformal map from a holomorphic $1$-form integrated over a periodic fundamental polygon \cite{GuYa02}. The sphere radius is fixed at $1$, whereas the major and minor radii $R$ and $r$ of the torus are first chosen to reduce the initial area distortion and are then kept fixed throughout the iterations.

The discrete $L^2$-gradient \eqref{eq:L2_grad_vec} of $\f$ on the ambient space is given by
\[
\g^s = 2\,M^{-1}L_{\rS}(f)\,\f^s, \quad s=1,2,3.
\]
The direct explicit Euler discretization with time step $\Delta t$ gives
\[
 {\f}^{(k+1)}- {\f}^{(k)}  = - \Delta t \,\Pi_{T \N}\big(M^{-1}L_{\rS}(f^{(k)})\,{\f}^{(k)} \big),
\]
where the factor of $2$ has been absorbed into the time step $\Delta t$. Yet, unlike in the continuous setting, a finite time step $\Delta t$ may cause the updated vertices  to drift off the surface $\N$. We therefore project back onto the surface:
\begin{equation*} 
\f^{(k+1)} =  \Pi_{\N} \Big(  \f^{(k)}  - \Delta t \,\Pi_{T \N}\big(M^{-1}L_{\rS}(f^{(k)})\,{ \f}^{(k)} \big)  \Big),
\end{equation*}
where $\Pi_{\N}$ denotes the vertexwise projection onto $\N$.

However, this explicit Euler formulation often has poor performance in practice. To address this, we use a quasi-implicit Euler step $\widehat\p^{(k)}$, given by
\begin{equation*} 
\widehat \p^{(k)} = \widehat \f^{(k+1)}-\f^{(k)}, \quad 
\Big( M + \Delta t \,L_\rS(f^{(k)}) \Big) \, {\widehat \f}^{s^{(k+1)}} = M \,{\f^s}^{(k)}, \quad s = 1,2,3,
\end{equation*}
which is an implicit Euler step with $L_\rS$ frozen at the current iterate.
We then follow the same procedure as above, projecting onto the tangent space of $\N$ by subtracting its normal component:
\begin{equation} \label{eq:tangent}
\p^{(k)}_i = \widehat \p^{(k)}_i - \big(\widehat \p^{(k)}_i \cdot \n_i^{(k)} \big) \,\n^{(k)}_i, \quad i = 1,\ldots, n,
\end{equation}
where $\n_i^{(k)}$ is the unit outward normal to $\N$ at $\f_i^{(k)}$,
and then projecting onto the surface $\N$:
\begin{equation*} 
\f^{(k+1)} = \Pi_{\N} \big(  \f^{(k)} + \p^{(k)}   \big).
\end{equation*}
Since $M$ is diagonal with positive entries, the orthogonal projection with respect to the lumped discrete $L^2$ inner product separates vertexwise and therefore coincides with the vertexwise Euclidean projection. The resulting procedure is summarized in Algorithm~\ref{alg:flow_close}.

To select the time step $\Delta t$, we perform a line search during the first $10$ iterations and then keep the selected value fixed thereafter. Specifically, we use MATLAB's \texttt{fminbnd} to find the value of  $\Delta t$ for which the resulting iterate yields the greatest decrease in $E_\rS$.

For genus-zero surfaces, we take $\N=\bS^2$, for which the projection is
\begin{equation}\label{eq:Proj_sphere}
\Pi_{\bS^2}(\f)=\bigg(\frac{f_1}{\|f_1\|_2},\ldots,\frac{f_n}{\|f_n\|_2}\bigg)^\top.
\end{equation}
For genus-one surfaces, we take $\N=\bT^2$, a torus with major radius $R$ and minor radius $r$, for which the projection is
\[
\Pi_{\bT^2}(\f)=\bigg(
c_1 + r\,\frac{f_1-c_1}{\|f_1-c_1\|_2},\, \ldots,\,
c_n + r\,\frac{f_n-c_n}{\|f_n-c_n\|_2}
\bigg)^\top,
\quad
c_i=(R\cos\theta_i,\, R\sin\theta_i,\, 0),
\]
where $\theta_i = \mathrm{atan2}(f_i^2, f_i^1)$ for $i = 1,\ldots,n$.

\begin{algorithm}[t]
\caption{Discrete authalic flow for closed surfaces}
\label{alg:flow_close}
\begin{algorithmic}[1]
\Require A closed triangular mesh $\Mh$ and a smooth target surface $\N$.
\Ensure An approximately area-preserving simplicial map $f:\Mh\to\Nh$.
\State Initialize $\f$ by a conformal map whose vertices lie on $\N$.
\State Assemble the mass matrix $M$ via \eqref{eq:mass}.
\While{not converged}
    \State Assemble $L \gets L_{\rS}(\f)$ via \eqref{eq:Ls}.
    \State Choose a time step $\Delta t > 0$.
    \State Solve for $s=1,2,3$:
    \[
    (M + \Delta t\,L)\,\y^s  = M \f^s. 
    \]
    \State Set the tentative displacement $\p \gets \y - \f$.
    \State Project $\p$ to tangent space $T \N$ via \eqref{eq:tangent}.
    \State Project $\f^{\text{new}} \gets \Pi_{\N}( \f + \p)$.
    \State Update $\f \gets {\f}^{\text{new}}$.
\EndWhile
\State \Return $\f$.
\end{algorithmic}
\end{algorithm}

The closed-surface algorithm only requires vertexwise projection back to the prescribed target surface. For open surfaces, the boundary and interior vertices should be addressed differently, which leads to the following variant.

\subsection{Open surfaces}
\label{sec:4.3}
Let $\Mh$ be a genus-zero open surface. If $\Mh$ has more than one boundary component, we cap every component except a selected outer boundary by adding a center vertex and joining it to all the vertices of that component. The resulting augmented mesh is simply connected. We continue to denote it by $\Mh$ and find a simplicial map $f:\Mh\to\mathbb{R}^2$ that parameterizes $\Mh$ over the unit disk $\mathbb{D}$. Since $f$ is piecewise linear, its image $\Nh:=f(\Mh)$ is a polygonal approximation of $\mathbb{D}$.

We denote the boundary and interior index sets by
\begin{equation} \label{eq:BI}
\B = \{ b \mid v_b \in \partial\Mh \}, 
\qquad 
\I = \{ i \mid v_i \notin \partial\Mh \}. 
\end{equation}
By reordering the vertices, we can write
\begin{equation*}
L_\rS(f) = 
\begin{bmatrix}
    [L_\rS(f)]_{\I, \I}  &  [L_\rS(f)]_{\I, \B}\\
    [L_\rS(f)]_{\B, \I}  &  [L_\rS(f)]_{\B, \B}
\end{bmatrix}, ~~~\mbox{and}~~~ \f = 
\begin{bmatrix}
    \f_\I\\
    \f_\B
\end{bmatrix},
\end{equation*}

To keep the boundary vertices on the unit circle, we let them flow tangentially along the circle while allowing the interior vertices to adapt without an explicit disk constraint. Unlike the fixed-boundary variations used in the continuous characterization, this boundary update is a practical discrete relaxation that further reduces the residual area distortion.

More precisely, we first update the boundary as in the closed-surface case, but with $\N = \bS^1$. 
We initialize $f^{(0)}$ as a harmonic map with arc-length boundary parameterization and apply a quasi-implicit Euler method
\begin{equation*}
\widehat \p^{(k)} = \widehat \f^{(k+1)} - \f^{(k)}, \quad
\Big( M + \Delta t \,L_\rS(f^{(k)}) \Big) \, {\widehat \f}^{s^{(k+1)}} = M \,{\f^s}^{(k)}, \quad s = 1,2.
\end{equation*}
We consider only the boundary part and retain its tangential component:
\begin{equation} \label{eq:tangent_circle}
\p^{(k)}_i = \widehat \p^{(k)}_i - \big(\widehat \p^{(k)}_i \cdot \n_i^{(k)} \big) \,\n^{(k)}_i, \quad i \in \B,
\end{equation}
where $\n_i^{(k)}$ is the unit outward normal to $\bS^1$ at $\f_i^{(k)}$. Since the finite time step causes the boundary vertices to drift off $\bS^1$, we project them back: 
\begin{equation*}
\f^{(k+1)}_\B= \Pi_{\bS^1} \!\Big ({\f}^{(k)}_\B + \p^{(k)}_\B \Big),
\end{equation*}
where $\Pi_{\bS^1}$ is the two-dimensional analogue of \eqref{eq:Proj_sphere}.

Next, from \eqref{eq:Es_grad}, the partial gradient with respect to $\f_\I^s$ is
\[
\nabla_{\f^s_\I} E_\rS(f) = 2\,\Big( [L_\rS(f)]_{\I, \I} \,\f_\I^s  +  [L_\rS(f)]_{\I, \B} \,\f_\B^s \Big), \quad s = 1,2,
\]
and thus, the $L^2$-gradient \eqref{eq:L2_grad_vec} is
\[
\g_\I^{s} = 2\, M_{\I, \I}^{-1} \Big( [L_\rS(f)]_{\I, \I} \,\f_\I^s  +  [L_\rS(f)]_{\I, \B} \,\f_\B^s \Big), \quad s = 1,2.
\]
Since no projection is needed for the interior vertices, we directly apply the quasi-implicit Euler method with the updated boundary
\begin{equation*}
\Big( M_{\I,\I} + \Delta t \,[L_\rS(f^{(k)})]_{\I,\I} \Big) {\f^s_\I}^{(k+1)} 
= M_{\I,\I} {\f_\I^s}^{(k)} -\Delta t \, [L_\rS(f^{(k)})]_{\I,\B} \,{\f^s_\B}^{(k+1)}, 
\quad s=1,2.
\end{equation*}
For the interior variables, the implementation takes the formal large-step limit of this update. The mass term then drops out, and the update reduces to the linear system
\begin{equation*} 
[L_\rS(f^{(k)})]_{\I,\I} \,{\f^s_\I}^{(k+1)} 
= - [L_\rS(f^{(k)})]_{\I,\B} \,{\f^s_\B}^{(k+1)}, 
\quad s=1,2.
\end{equation*}

The overall procedure is summarized in Algorithm~\ref{alg:flow_open}.
As in the closed case, the time step $\Delta t$ is selected by a line search during the first 20 iterations and then kept fixed thereafter.

\begin{algorithm}[t]
\caption{Discrete authalic flow for open surfaces}
\label{alg:flow_open}
\begin{algorithmic}[1]
\Require An open triangular mesh $\Mh$ with boundary $\partial\Mh$.
\Ensure An approximately area-preserving simplicial map $f:\Mh\to\mathbb{D}$.
\State Set index sets $\B=\{i: v_i\in\partial\Mh\}$ and $\I=\{i: v_i\notin\partial\Mh\}$.
\State Initialize $\f$ by a harmonic map with arc-length boundary parameterization.
\State Assemble the mass matrix $M$ via \eqref{eq:mass}.
\While{not converged}
    \State Assemble $L \gets L_{\rS}(\f)$ via \eqref{eq:Ls}.
    \State Choose a time step $\Delta t>0$.
    \State Solve for the tentative iterate $\y^s$, for $s=1,2$:
    \[
    (M +\Delta t\,L)\,\y^s = M \f^s. 
    \]
    \State Set the tentative displacement $\p \gets \y - \f$.
    \State Project $\p_\B$ to tangent space $T\bS^1$ via \eqref{eq:tangent_circle}.
    \State Project the boundary: $\f_\B^{\text{new}} \gets \Pi_{\bS^1}(\f_\B + \p_\B)$.
    \State Solve for the interior update, for $s = 1, 2$:
    \[
    L_{\I,\I} \,{\f_\I^s}^{\text{new}} = -L_{\I,\B} \, {\f_\B^s}^{\text{new}}.
    \]
    \State Update $\f_\B \gets \f_\B^{\text{new}}$ and $\f_\I \gets \f_\I^{\text{new}}$.
\EndWhile
\State \Return $\f$.
\end{algorithmic}
\end{algorithm}

\section{Consistency and area distortion estimates}
\label{sec:5}
So far, we have derived the discrete authalic flow and applied it to mesh parameterizations of surfaces with various topologies. This derivation is rooted in the discretization of the stretch energy. Hence, to justify this approximation, it is crucial to verify that, as $h \to 0$, the discrete stretch energy is consistent with its continuous counterpart and, more importantly, that the $L^2$ area distortion of discrete global minimizers is $O(h)$.

\subsection{Assumption of triangulation}
\label{sec:5.1}
By the Tubular Neighborhood Theorem~\cite[Theorem 6.24]{Lee13}, a tubular neighborhood of $\mathcal{M}$ is well-defined. When $\partial\M=\varnothing$, we define the closest-point projection $\pi_\M: \mathcal{M}_h \to \mathcal{M}$ by
\[
\pi_\M(x) := \argmin_{p\in \M} \| p - x\|^2,
\]
as illustrated in Figure~\ref{fig:simplicial_approx}.
Because minimizing the distance forces the vector $(x - \pi_\M(x))$ to be orthogonal to the tangent space of $\mathcal{M}$ at $\pi_\M(x)$, we have the representation
\begin{equation} \label{eq:patch_projection}
    x = \pi_\M(x) + s(x) \,n \big(\pi_\M(x) \big),
\end{equation} 
where $s(x)$ is the signed distance, and $n$ is the unit normal. 
Note that $\pi_\M(x)$ is defined as the nearest-point projection onto $\M$, rather than a linear orthogonal projection. When $\partial\M\neq\varnothing$, we instead use the closest-point projection onto a compatible smooth extension $\M^e\supset\M$ and denote its restriction to $\Mh$ again by $\pi_\M$.

Following the framework of~\cite[Section 4.1]{DzEl13}, we impose the following assumption to ensure that $\mathcal{M}_h$ is a shape-regular approximation of $\mathcal{M}$.

\begin{assumption} \label{ass:triangulation}
Let $\{\Mh\}_{h>0}$ be a family of simplicial approximations of a compact, connected, oriented smooth surface $\M \subset \R^3$ (possibly with boundary), with all vertices on $\M$. Assume that:
\begin{itemize}
    \item \textbf{Closest-point projection:} 
    If $\partial \M = \varnothing$, $\Mh$ lies in a tubular neighborhood of $\M$ such that the closest-point projection $\pi_\M: \Mh \to \M$ is well-defined and bijective. If $\partial\M\neq\varnothing$, the closest-point projection onto a smooth extension $\M^e\supset\M$ is well-defined and bijective from $\Mh$ onto the lift $\M_h^\ell:=\pi_\M(\Mh)$.

    \item \textbf{Boundary extensions:}
    If $\partial \M \neq \varnothing$, then boundary vertices $\V_\partial:=\V\cap\partial\Mh\subset\partial\M$, and there exists a constant $C>0$, independent of $h$ such that
    \[
    \M_h^\ell\mathbin{\triangle}\M
    \subset
    \bigl\{x\in\M^e \, \mid \,
    \rho_{\partial \M}(x)\le Ch^2\bigr\}, \quad 
    \rho_{\partial \M}(x) = \mathrm{dist}_{\M^e}(x, \partial \M).
    \]
    Moreover, the closest-point projection $\pi_{\partial\M}:\partial\M_h^\ell\to\partial\M$ is bijective.

    \item \textbf{Shape-regularity:} 
    The family $\{\Mh\}_{h>0}$ is uniformly shape-regular; i.e., there exists a constant $C>0$, independent of the mesh size $h$, such that
    \begin{equation}
        \max_{\tau \in \F} \frac{\diam{\tau}}{\inrad{\tau}} \leq C,
        \label{eq:shape_regular}
    \end{equation}
    where $\diam{\tau}$ and $\inrad{\tau}$ denote the diameter and in-ball radius of the face $\tau \in \F$, respectively.
\end{itemize}
\end{assumption}

\begin{figure}[tbp]
\centering
\resizebox{\textwidth}{!}{
\begin{tabular}{ccc}
\includegraphics[height=3.0cm]{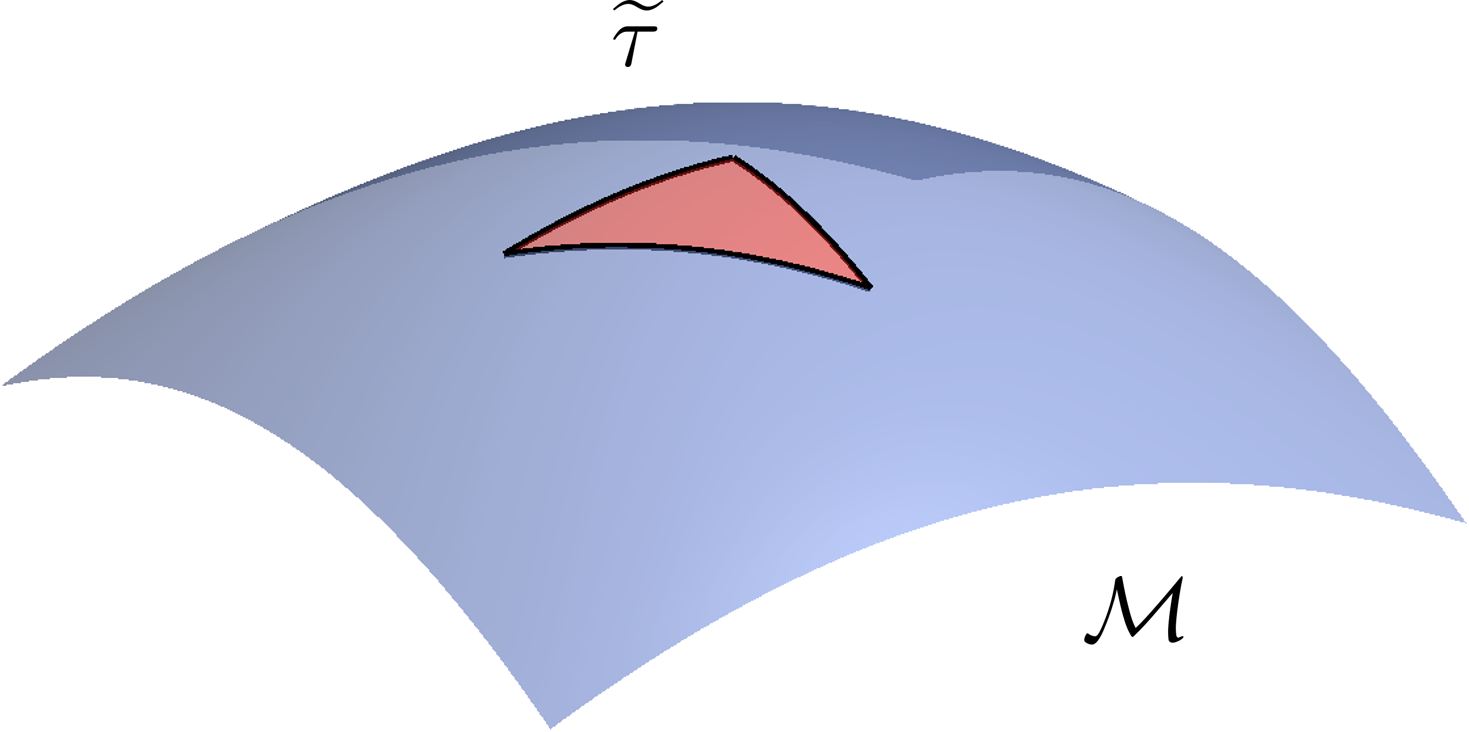} &
\raisebox{1.5cm}{$\xrightarrow[]{\quad \mbox{$\widetilde f$} \quad}$} &
\includegraphics[height=3.0cm]{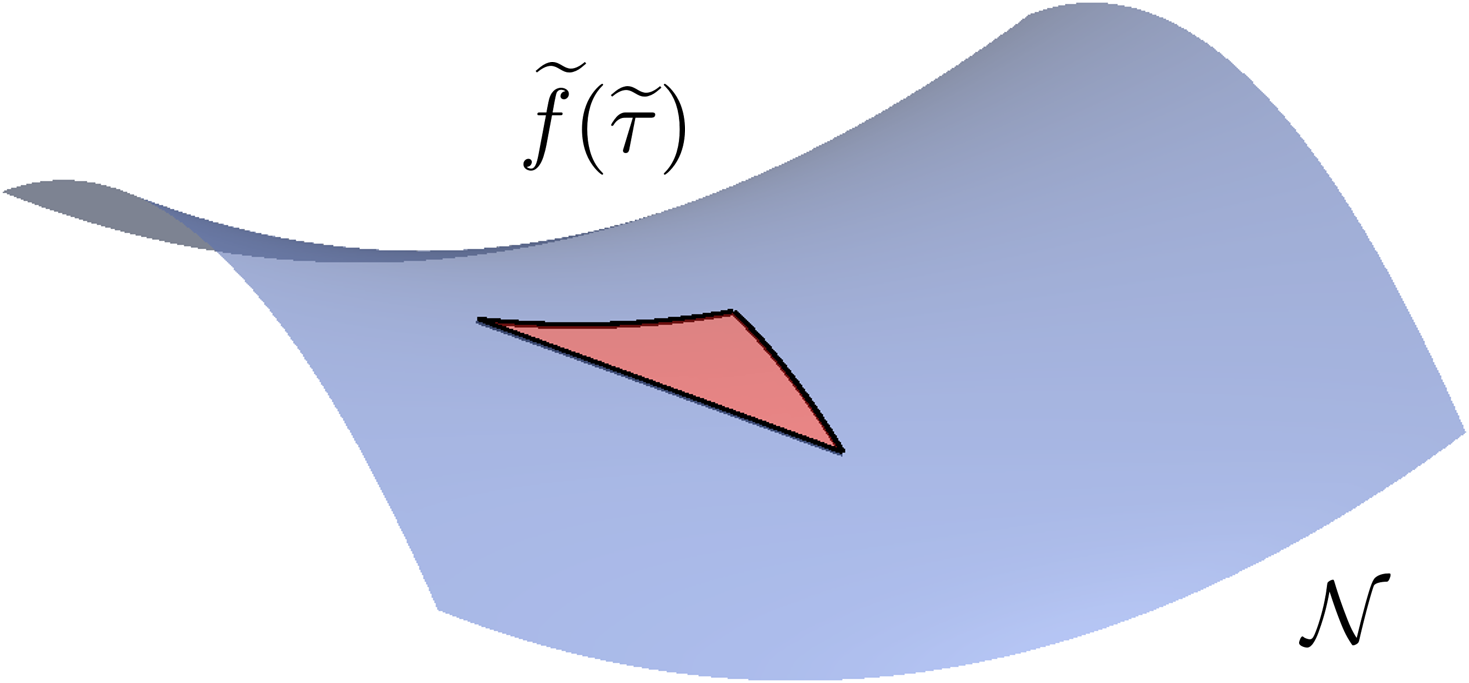} \\ \\
$\pi_\M~\Big\uparrow$ \qquad \qquad  & & $\Big\uparrow~ \pi_{\N}$ \\ \\
\includegraphics[height=3.0cm]{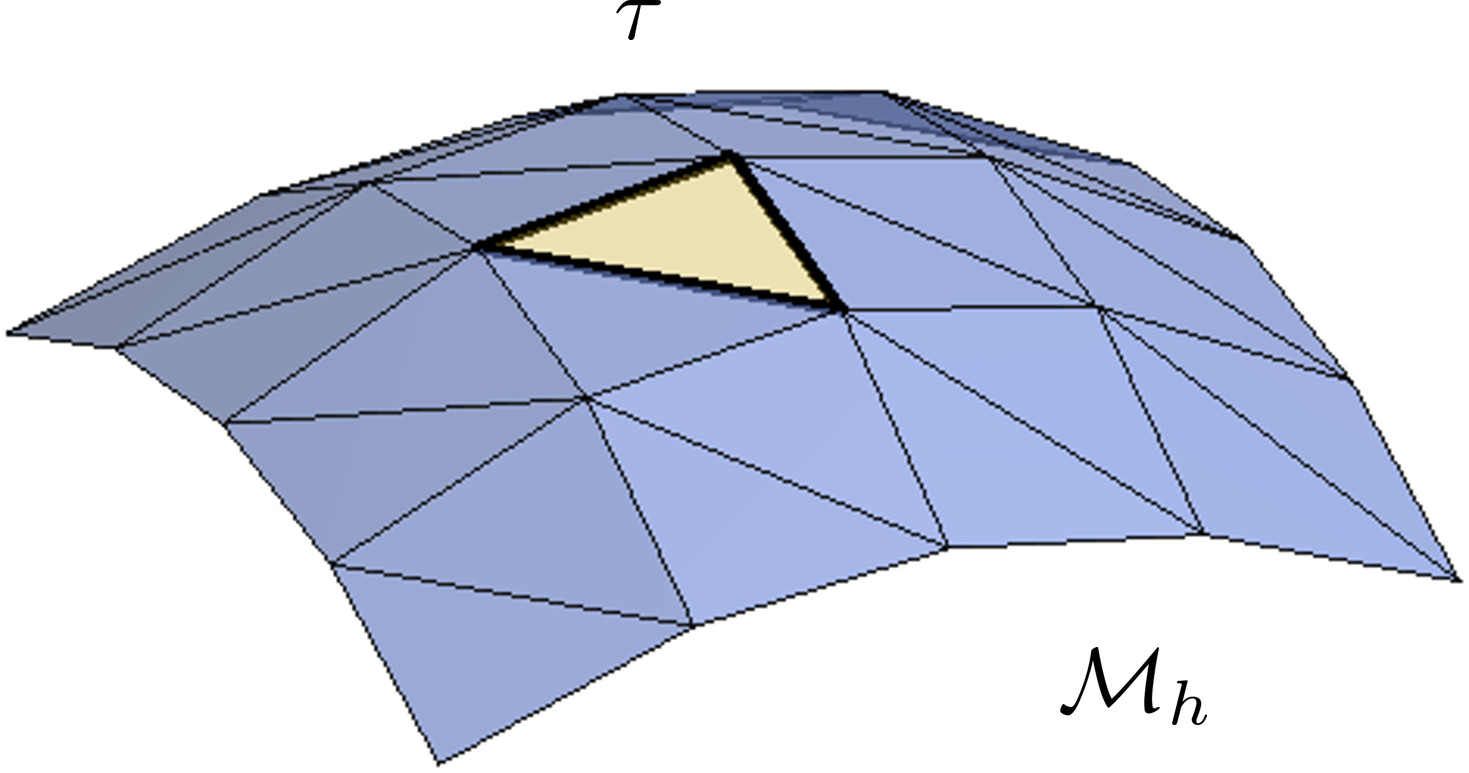} &
\raisebox{1.5cm}{$\xrightarrow[]{\quad \mbox{$f$} \quad}$} &
\includegraphics[height=3.0cm]{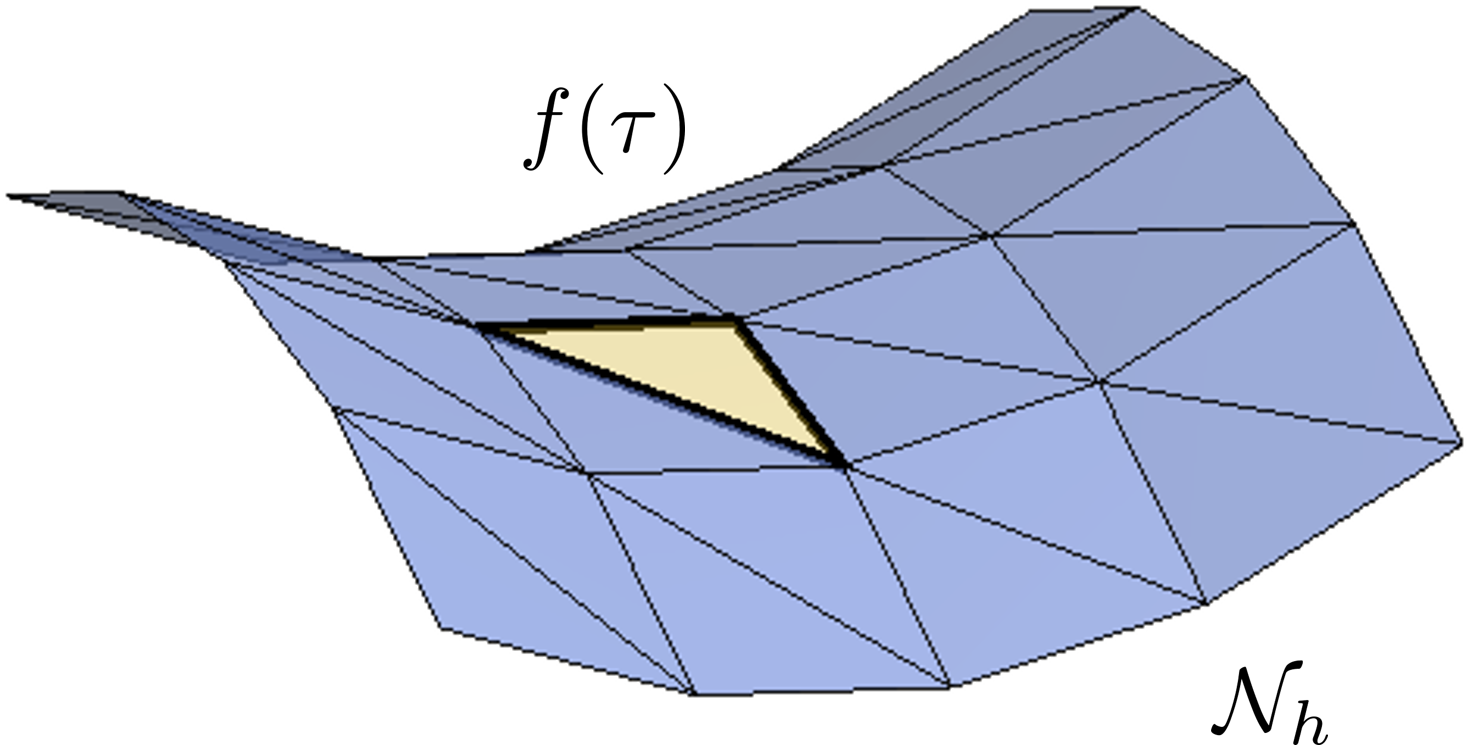}
\end{tabular}
}
\caption{Relationship between the diffeomorphism $\widetilde f$, its piecewise-linear interpolant $f$, and the closest-point projections. }
\label{fig:simplicial_approx}
\end{figure}

\subsection{Consistency of the stretch energy}
\label{sec:5.2}

Under Assumption~\ref{ass:triangulation}, we use the following geometric estimate.

\begin{lemma} \label{lem:area_change}
Suppose $\M$ and $\Mh$ satisfy Assumption~\ref{ass:triangulation}. The projection operator $\pi_\M$ satisfies
\begin{equation}\label{eq:proj_area_estimate}
\big\| 1 - J_{\pi_\M} \big\|_{L^\infty(\Mh)} \leq  C h^2
\end{equation}
for some constant $C>0$. Moreover,
\begin{equation}\label{eq:lifted_domain_estimate}
|\M_h^\ell\mathbin{\triangle}\M|\leq Ch^2,
\qquad
\big||\Mh|-|\M|\big|\leq Ch^2,
\end{equation}
where $\M_h^\ell=\M$ when $\partial\M=\varnothing$ and $\triangle$ denotes the symmetric difference in $\M^e$.
\end{lemma}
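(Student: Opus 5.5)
The proof proceeds facewise, using the standard local geometry of the closest-point projection as in \cite{DzEl13}. Fix a face $\tau\in\F$ and write points $x\in\tau$ via \eqref{eq:patch_projection} as $x=\pi_\M(x)+s(x)\,n(\pi_\M(x))$. The first step is to show $\|s\|_{L^\infty(\tau)}\le Ch^2$ and $\|n-\n_\tau\|_{L^\infty(\tau)}\le Ch$, where $\n_\tau$ is the unit normal of the flat triangle $\tau$. The bound on $s$ comes from two facts: the vertices of $\tau$ lie on $\M$, so $s$ vanishes there, and $s\circ(\text{affine parameterization of }\tau)$ is smooth with second derivatives bounded by the curvature of $\M$. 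Linear interpolation error then gives $|s|\le C\,\diam{\tau}^2$. The normal estimate follows from a Taylor expansion of $\M$ near a vertex, where shape regularity \eqref{eq:shape_regular} prevents the plane of $\tau$ from tilting relative to $T\M$ by more than $O(h)$. I expect this to be the main obstacle. Without shape regularity, a thin triangle with vertices on $\M$ could have a normal far from $n$, so the argument must use the inradius bound to control the gradient of the interpolation error, giving $\|\nabla_\tau s\|_{L^\infty(\tau)}\le Ch$.

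Next I compute $J_{\pi_\M}$ on $\tau$. Differentiating \eqref{eq:patch_projection} along tangent vectors $X$ of $\tau$ gives $X=\d\pi_\M(X)+(\d s\cdot X)\,n+s\,\d n(\d\pi_\M X)$, that is, $(I+sW)\,\d\pi_\M X = X-(\d s\cdot X)n$, where $W$ is the Weingarten map at $\pi_\M(x)$. Since the left side is tangent to $\M$, projecting onto $T\M$ gives $\d\pi_\M=(I+sW)^{-1}P_{T\M}|_{T\tau}$. The two factors contribute separately. First, $\det(I+sW)^{-1}=1+O(s)=1+O(h^2)$. Second, the area factor of $P_{T\M}$ restricted to the plane of $\tau$ is $|n\cdot\n_\tau|=\sqrt{1-|n-\n_\tau|^2+\dots}=1-O(|n-\n_\tau|^2)=1-O(h^2)$. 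The quadratic dependence on the normal angle is what upgrades the $O(h)$ normal error to an $O(h^2)$ area error. Taking the maximum over $\tau$ gives \eqref{eq:proj_area_estimate}.

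For \eqref{eq:lifted_domain_estimate}, the area formula gives $|\M_h^\ell|=\int_\Mh J_{\pi_\M}\,\d A$, so $\big||\M_h^\ell|-|\Mh|\big|\le Ch^2|\Mh|$. Here $|\Mh|$ is bounded uniformly because, by the same estimate, it is comparable to $|\M_h^\ell|\le|\M^e|$. When $\partial\M=\varnothing$ we have $\M_h^\ell=\M$ and the proof is complete. Otherwise, the boundary-extension hypothesis places $\M_h^\ell\triangle\M$ inside the collar $\{x\in\M^e:\rho_{\partial\M}(x)\le Ch^2\}$. For $h$ small this collar lies within the normal-coordinate neighborhood of the compact smooth curve $\partial\M$, so its area is at most $C\,|\partial\M|\,h^2$. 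This proves $|\M_h^\ell\triangle\M|\le Ch^2$. The triangle inequality $\big||\Mh|-|\M|\big|\le\big||\Mh|-|\M_h^\ell|\big|+|\M_h^\ell\triangle\M|$ then gives the remaining bound.
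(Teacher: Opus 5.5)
Your proposal is correct and follows the paper's route. The total-area and boundary-strip steps match the paper's argument. For \eqref{eq:proj_area_estimate} the paper simply cites \cite[Lemma 4.1]{DzEl13}, whereas you reproduce that lemma's proof: $s=O(h^2)$ and $|n-\n_\tau|=O(h)$ from interpolation of the distance function, then $\d\pi_\M=(I+sW)^{-1}P_{T\M}$. One small correction: the exact identity is $n\cdot\n_\tau=1-\tfrac12|n-\n_\tau|^2$, equivalently $\sqrt{1-|\nabla_\tau s|^2}$, rather than the square-root expression you wrote, but your $1-O(h^2)$ conclusion is unaffected.
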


\begin{proof}
When $\partial\M=\varnothing$, estimate~\eqref{eq:proj_area_estimate} is precisely~\cite[Lemma 4.1]{DzEl13}, and $\M_h^\ell=\M$. When $\partial\M\neq\varnothing$, the same local argument applies to the smooth extension $\M^e$ and its closest-point projection, giving~\eqref{eq:proj_area_estimate} after restriction to $\Mh$.

If $\partial\M\neq\varnothing$, the boundary-extension condition in Assumption~\ref{ass:triangulation} directly gives
\[
\M_h^\ell\mathbin{\triangle}\M
\subset
\bigl\{x\in\M^e \, \mid \, \rho_{\partial\M}(x)\leq Ch^2\bigr\}.
\]
The area of a strip of width $\delta$ around the compact smooth curve $\partial\M$ is $\O(\delta)$. Hence,
\[
|\M_h^\ell\mathbin{\triangle}\M|\leq Ch^2.
\]
For a closed surface, the same estimate is immediate because $\M_h^\ell=\M$.

It remains to compare the total areas. For sufficiently small $h$, estimate~\eqref{eq:proj_area_estimate} implies $J_{\pi_\M}\geq 1/2$. Therefore,
\[
|\Mh|
\leq 2\int_\Mh J_{\pi_\M}\,\d A
=2|\M_h^\ell|
\leq 2\bigl(|\M|+Ch^2\bigr),
\]
so $|\Mh|$ is uniformly bounded. The change-of-variables formula and~\eqref{eq:proj_area_estimate} now give
\[
\big||\Mh|-|\M_h^\ell|\big|
=\left|\int_\Mh (1-J_{\pi_\M})\,\d A\right|
\leq
\|1-J_{\pi_\M}\|_{L^\infty(\Mh)}\,|\Mh|
\leq Ch^2.
\]
Combining this estimate with
\[
\bigl||\M_h^\ell|-|\M|\bigr|
\leq |\M_h^\ell\mathbin{\triangle}\M|
\leq Ch^2,
\]
we obtain
\[
\bigl||\Mh|-|\M|\bigr|
\leq
\bigl||\Mh|-|\M_h^\ell|\bigr|
+
\bigl||\M_h^\ell|-|\M|\bigr|
\leq Ch^2.
\]
\end{proof}

We then consider the diffeomorphism $\widetilde{f}: \M \to \N$ and its interpolant $f$ (see Figure~\ref{fig:simplicial_approx}). If $\partial\M\neq\varnothing$, the collar neighborhood theorem allows us to fix a diffeomorphic extension of $\widetilde f$ between neighborhoods of $\M$ and $\N$ in compatible smooth extensions $\M^e$ and $\N^e$, and we use the same notation for this extension. 

To compare the area change directly, we define the smooth comparison map
\[
\widehat f := \widetilde f \circ \pi_\M,
\]
and establish the following lemma.

\begin{lemma}  \label{lem:inter_area_change}
Let $f: \Mh \to \Nh$ be the piecewise linear interpolant of the diffeomorphism $\widetilde{f}: \M \to \N$, and suppose $\M$ and $\Mh$ satisfy Assumption~\ref{ass:triangulation}. Then,
\begin{equation}\label{eq:inter_area_change}
\left\|f - \widehat f  \right\|_{L^\infty(\Mh)}
\leq c h^2, \qquad
\left\|J_{f}-J_{\widehat f}  \right\|_{L^\infty(\Mh)}
\leq C h,
\end{equation}
for some constants $c,C>0$ independent of $h$.
\end{lemma}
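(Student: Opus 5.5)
The plan is to work one face at a time. On each $\tau\in\F$, both $f|_\tau$ and $\widehat f|_\tau=\widetilde f\circ\pi_\M|_\tau$ are maps from the flat triangle $\tau$ into $\R^3$. Since all vertices of $\Mh$ lie on $\M$, we have $\pi_\M(v_i)=v_i$ and hence $\widehat f(v_i)=\widetilde f(v_i)=f(v_i)$. Thus $f|_\tau$ is exactly the affine Lagrange interpolant of the smooth map $\widehat f|_\tau$ at the three vertices.

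\textbf{First estimate.} For $\|f-\widehat f\|_{L^\infty}$ we invoke the standard interpolation estimate on a shape-regular triangle,
\[
\|\widehat f-I_\tau\widehat f\|_{L^\infty(\tau)}\le c\,h^2\|D^2\widehat f\|_{L^\infty(\tau)},
\]
obtained via Taylor expansion or Bramble--Hilbert on the reference element using \eqref{eq:shape_regular}. It remains to bound $\|D^2\widehat f\|_{L^\infty(\tau)}$ uniformly in $h$. Here $\widetilde f$ (or its fixed extension when $\partial\M\neq\varnothing$) is smooth on a compact set. The closest-point projection $\pi_\M$ is smooth on a fixed tubular neighborhood, with derivatives bounded in terms of the curvature of $\M$; since $\tau$ is flat, its restriction to $\tau$ has derivatives up to order two bounded independently of $h$. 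The chain rule then gives $\|D^2\widehat f\|\le C$, which yields the first bound in \eqref{eq:inter_area_change}.

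\textbf{Second estimate.} Write both area ratios as Gram determinants on $\tau$: for a map $F$ defined on the plane of $\tau$, $J_F=\sqrt{\det(DF^\top DF)}$, with $DF$ restricted to the two-dimensional tangent space of $\tau$. The same interpolation theory gives the gradient estimate
\[
\|D\widehat f-Df\|_{L^\infty(\tau)}\le C h\|D^2\widehat f\|_{L^\infty(\tau)}\le Ch.
\]
Both $Df$ and $D\widehat f$ are uniformly bounded. Moreover, $D\widehat f$ has singular values bounded below, because $\widetilde f$ is a diffeomorphism and $D\pi_\M|_\tau$ is close to an isometry onto $T\M$ by Lemma~\ref{lem:area_change}. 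Hence for $h$ small, $Df$ is also uniformly nondegenerate. The map $A\mapsto\sqrt{\det(A^\top A)}$ is locally Lipschitz on this compact set of full-rank $3\times2$ matrices. This gives $|J_f-J_{\widehat f}|\le C\|Df-D\widehat f\|\le Ch$ pointwise, and taking the maximum over faces completes the proof.

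\textbf{Main obstacle.} The delicate step is the uniform control of $D^2\widehat f$ on each flat face, i.e.\ making sure the constant in the interpolation estimate does not degenerate with $h$. I would handle it by viewing $\widehat f$ as the restriction to $\tau$ of the fixed smooth map $\widetilde f\circ\pi_\M$, which is defined on a fixed neighborhood in $\R^3$ and independent of $h$. With this view, all ambient derivatives are bounded by constants depending only on $\M$, $\N$ and $\widetilde f$, so restricting to a plane can only decrease their norms. Shape regularity enters only through the constants of the interpolation estimate, in particular the gradient estimate, which requires the ratio $\diam{\tau}/\inrad{\tau}$ to be bounded.
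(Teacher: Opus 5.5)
Your proposal is correct and follows essentially the same route as the paper: the vertices agree, $f(v_\ell)=\widehat f(v_\ell)$; a Taylor-based interpolation estimate holds on each flat face, which the paper derives by hand from the barycentric identities, with shape regularity entering only through $\|\nabla\lambda^\tau_\ell\|\le c\,\diam{\tau}^{-1}$ in the gradient bound; and a Lipschitz bound on the area factor follows from uniform bounds on $\mathrm{D}f$ and $\mathrm{D}\widehat f$. The only real difference is that the paper writes $J_F=\|\partial_{x_1}F\times\partial_{x_2}F\|$, which is Lipschitz on bounded sets with no rank condition, so your nondegeneracy detour is unnecessary; note also that Lemma~\ref{lem:area_change} gives only $J_{\pi_\M}\approx 1$, not near-isometry, although combined with the upper bound on $\mathrm{D}\pi_\M$ it does yield the lower singular-value bound you invoke.
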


\begin{proof}
For $\partial\M\neq\varnothing$, the boundary-extension condition ensures that $\pi_\M(\Mh)=\M_h^\ell$ remains in the fixed neighborhood on which $\widetilde f$ has been smoothly extended. Thus, $\widehat f=\widetilde f\circ\pi_\M$ has uniformly bounded derivatives on all elements, including those adjacent to $\partial\Mh$.

Let $\tau = [v_i, v_j, v_k] \subset \Mh$ be a fixed triangle. 
Choose orthonormal coordinates on the affine plane containing $\tau$ and identify $\tau$ with a planar triangle in $\R^2$.
Since $f$ is the piecewise linear interpolant of $\widetilde f$, it agrees with $\widehat f$ at the vertices of $\tau$:
\[
f(v_\ell) = \widehat f(v_\ell) = \widetilde f \circ \pi_\M(v_\ell),
\qquad \ell \in \{i,j,k\}.
\]
For $x\in \tau$, the barycentric coordinate \eqref{eq:bary_coord} gives
\begin{equation} \label{eq:inter_val}
f(x)=\sum_{\ell \in \{i,j,k\}} \widehat f(v_\ell)
\,\lambda^\tau_\ell(x) \in \R^{3}.
\end{equation}
We apply the Taylor expansion of $\widehat f$ about $x$:
\begin{equation} \label{eq:Taylor}
\widehat f(v_\ell)=\widehat f(x)
+ \mathrm{D}\widehat f(x)(v_\ell-x)+ R_\ell(x),
\end{equation}
where the remainder $R_\ell(x) \in \R^3$ satisfies
\begin{equation}
\|R_\ell(x)\| \leq c|v_\ell-x|^2 \leq c \diam{\tau}^2,
\label{eq:inter_remain}
\end{equation}
with a constant $c>0$. Thus, substituting \eqref{eq:Taylor} into \eqref{eq:inter_val}, we obtain
\begin{align*}
f(x)
&= \sum_{\ell \in \{i,j,k\}} \Big(\widehat f(x) + \mathrm{D} \widehat f(x)(v_\ell - x) + R_\ell(x)\Big)  \lambda^\tau_\ell(x) \\
&= \widehat f(x) \sum_{\ell \in \{i,j,k\}} \lambda^\tau_\ell(x) + \mathrm{D} \widehat f(x) \sum_{\ell \in \{i,j,k\}} (v_\ell - x)  \lambda^\tau_\ell(x) + \sum_{\ell \in \{i,j,k\}} R_\ell(x) \lambda^\tau_\ell(x).
\end{align*}
Since the barycentric coordinates satisfy the identities
\begin{equation}
\sum_{\ell \in \{i,j,k\}} \lambda^\tau_\ell(x) = 1, \qquad
\sum_{\ell \in \{i,j,k\}} v_\ell  \, \lambda^\tau_\ell(x) = x,
\label{eq:bary_identity}
\end{equation}
we have
\[
f(x) - \widehat f(x) = \sum_{\ell \in \{i,j,k\}} R_\ell(x) \lambda^\tau_\ell(x).
\]
Since $0\leq \lambda^\tau_\ell(x)\leq 1$ on $\tau$, by~\eqref{eq:inter_remain},
\[
\|R_\ell(x) \, \lambda^\tau_\ell(x)\|  \leq \|R_\ell(x)\| \, |\lambda^\tau_\ell(x)| \leq c \diam{\tau}^2,
\]
and consequently,
\begin{align}\label{eq:interpolant_error}
\Big\| f -  \widehat f \Big\|_{L^\infty(\Mh)} 
\leq  \sup_{\tau \in \F}\sup_{x \in \tau}\sum_{\ell \in \{i,j,k\}} \|R_\ell(x) \, \lambda^\tau_\ell(x)\| 
\leq c h^2.
\end{align}

The gradient can be estimated with the same approach. Differentiating the barycentric representation \eqref{eq:bary_coord} gives
\begin{equation} \label{eq:inter_grad}
\mathrm{D} f(x)=\sum_{\ell \in \{i,j,k\}} \widehat f(v_\ell)
\,\nabla \lambda^\tau_\ell(x)^\top \in \R^{3 \times 2}.
\end{equation}
Substituting Taylor expansion \eqref{eq:Taylor} into \eqref{eq:inter_grad} gives
\begin{align*}
&\mathrm{D} f(x)
= \sum_{\ell \in \{i,j,k\}} \Big(\widehat f(x) + \mathrm{D} \widehat f(x)(v_\ell - x) + R_\ell(x)\Big) \nabla \lambda^\tau_\ell(x)^\top \\
&= \widehat f(x) \sum_{\ell \in \{i,j,k\}} \nabla \lambda^\tau_\ell(x)^\top + \mathrm{D} \widehat f(x) \sum_{\ell \in \{i,j,k\}} (v_\ell - x) \nabla \lambda^\tau_\ell(x)^\top  + \sum_{\ell \in \{i,j,k\}} R_\ell(x)\nabla \lambda^\tau_\ell(x)^\top.
\end{align*}
Differentiating the barycentric identities~\eqref{eq:bary_identity} gives
\begin{equation*}
\sum_{\ell \in \{i,j,k\}} \nabla \lambda^\tau_\ell(x)^\top = 0, \qquad
\sum_{\ell \in \{i,j,k\}} v_\ell  \, \nabla \lambda^\tau_\ell(x)^\top = I_2,
\end{equation*}
and thus 
\begin{equation*}
\mathrm{D} f(x) - \mathrm{D}\widehat f(x) = \sum_{\ell \in \{i,j,k\}} R_\ell(x) \, \nabla\lambda^\tau_\ell(x)^\top.
\end{equation*}
The shape-regularity ensures that $\|\nabla \lambda^\tau_\ell\| \leq c \diam{\tau}^{-1}$ for some constant $c > 0$, so by~\eqref{eq:inter_remain},
\[
\|R_\ell(x) \, \nabla\lambda^\tau_\ell(x)^\top\|  \leq \|R_\ell(x)\| \, \|\nabla\lambda^\tau_\ell(x)\| \leq C \diam{\tau},
\]
for some constant $C>0$. Consequently,
\begin{align}\label{eq:interpolant_gradient_error}
\Big\|\mathrm{D} f-\mathrm{D} \widehat f \Big\|_{L^\infty(\tau)} 
\leq \sup_{x \in \tau}\sum_{\ell \in \{i,j,k\}} \|R_\ell(x) \, \nabla\lambda^\tau_\ell(x)^\top\| 
\leq  C \diam{\tau}.
\end{align}

It remains to convert this gradient estimate into a Jacobian estimate.
We write
\[
\mathrm{D} f = \Big[ \partial_{x_1} f ~~ \partial_{x_2} f  \Big], \qquad
\mathrm{D} \widehat f = \Big[ \partial_{x_1} \widehat f ~~\partial_{x_2} \widehat f \Big].
\]
For $x \in \tau$, the Jacobian satisfies
\begin{align*}
\Big| J_f(x) - J_{\widehat f}(x) \Big| 
&\leq \Big| \|\partial_{x_1} f \times \partial_{x_2} f\| - \|\partial_{x_1} \widehat f \times \partial_{x_2} \widehat f \| \Big| \\
&\leq \Big\| (\partial_{x_1} f -  \partial_{x_1} \widehat f) \times \partial_{x_2} f \Big\| + \Big\| \partial_{x_1} \widehat f \times (\partial_{x_2} f - \partial_{x_2} \widehat f  )\Big\|\\
&\leq \Big\| \partial_{x_1} f -  \partial_{x_1} \widehat f \Big\| \, \Big\| \partial_{x_2} f \Big\| + \Big\| \partial_{x_1} \widehat f \Big\| \, \Big\| \partial_{x_2} f - \partial_{x_2} \widehat f  \Big\|
\end{align*}
Since $\widetilde f$ and $\pi_\M$ are smooth on the relevant compact sets, $\mathrm D\widehat f$ is uniformly bounded. Estimate~\eqref{eq:interpolant_gradient_error} then implies that $\mathrm D f$ is also uniformly bounded for sufficiently small $h$.
\[
\Big| J_f(x) - J_{\widehat f}(x) \Big|  \leq  C \, \Big\|\mathrm{D} f-\mathrm{D} \widehat f \Big\|_{L^\infty(\tau)}  \leq \widetilde C \diam{\tau},
\]
for some constant $\widetilde C>0$.
Since this holds for every triangle $\tau$, the proof is complete.

\end{proof}

With the Jacobian estimate established, we now prove the consistency of the stretch energy.

\begin{theorem} \label{thm:consistency}
Let $f:\Mh \to \Nh$ be the piecewise linear interpolant of a diffeomorphism $\widetilde f: \M \to \N$ such that $\M$ and $\Mh$ satisfy Assumption~\ref{ass:triangulation}. Then,
\begin{equation}\label{eq:consistency}
\Big| E_\rS(f) - \sE_\rS(\widetilde f) \Big| \leq C  h,
\end{equation}
for some constant $C>0$.
\end{theorem}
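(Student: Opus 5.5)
We compare both energies through the smooth comparison map $\widehat f=\widetilde f\circ\pi_\M$ on $\Mh$. The plan is to split the error into three terms:
\[
E_\rS(f)-\sE_\rS(\widetilde f)
=\underbrace{\int_\Mh \bigl(J_f^2-J_{\widehat f}^2\bigr)\,\d A}_{(\mathrm{I})}
+\underbrace{\int_\Mh J_{\widehat f}^2\,\d A-\int_{\M_h^\ell} J_{\widetilde f}^2\,\d A_g}_{(\mathrm{II})}
+\underbrace{\int_{\M_h^\ell} J_{\widetilde f}^2\,\d A_g-\int_\M J_{\widetilde f}^2\,\d A_g}_{(\mathrm{III})}.
\]
Here $J_{\widehat f}$ is the area ratio of $\widehat f$ from $\Mh$ to $\N$, computed facewise on the planar triangles. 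In the closed case $\M_h^\ell=\M$, so $(\mathrm{III})=0$.

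\textbf{Term (I).} Write $J_f^2-J_{\widehat f}^2=(J_f-J_{\widehat f})(J_f+J_{\widehat f})$. Since $\widetilde f$ and $\pi_\M$ are smooth on compact sets, $J_{\widehat f}$ is uniformly bounded. By Lemma~\ref{lem:inter_area_change}, $J_f$ is also uniformly bounded for small $h$, and $\|J_f-J_{\widehat f}\|_{L^\infty(\Mh)}\le Ch$. Lemma~\ref{lem:area_change} shows that $|\Mh|$ is uniformly bounded. Together these give $|(\mathrm{I})|\le Ch$. This is the only term of order $h$; the others will be $O(h^2)$.

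\textbf{Term (II).} The area ratio is multiplicative: $J_{\widehat f}=(J_{\widetilde f}\circ\pi_\M)\,J_{\pi_\M}$ on each face. The change of variables $y=\pi_\M(x)$, using the bijectivity in Assumption~\ref{ass:triangulation}, gives
\[
\int_{\M_h^\ell}J_{\widetilde f}^2\,\d A_g=\int_\Mh (J_{\widetilde f}\circ\pi_\M)^2\,J_{\pi_\M}\,\d A .
\]
Hence
\[
(\mathrm{II})=\int_\Mh (J_{\widetilde f}\circ\pi_\M)^2\,J_{\pi_\M}\,(J_{\pi_\M}-1)\,\d A .
\]
By \eqref{eq:proj_area_estimate} this is bounded by $Ch^2|\Mh|\le Ch^2$.

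\textbf{Term (III).} We have $|(\mathrm{III})|\le \|J_{\widetilde f}\|_{L^\infty}^2\,|\M_h^\ell\mathbin{\triangle}\M|$, where $J_{\widetilde f}$ is taken on the fixed extension. By \eqref{eq:lifted_domain_estimate} this is at most $Ch^2$.

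Summing the three terms gives \eqref{eq:consistency}.

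\textbf{Main obstacle.} The delicate point is the multiplicativity identity for $J_{\widehat f}$ in (II). It must be checked that the area ratio of the composite on a planar face factors exactly as the ratio of $\pi_\M$ times the ratio of $\widetilde f$ at the projected point. This holds because both are ratios of pulled-back area forms: $\widehat f^*\d A_h=\pi_\M^*(\widetilde f^*\d A_h)=\pi_\M^*(J_{\widetilde f}\,\d A_g)$. Orientation preservation of $\pi_\M$ for small $h$ is needed here, and it follows from $J_{\pi_\M}\ge 1/2$.

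In the boundary case one also needs all quantities to live on the fixed extensions $\M^e,\N^e$. This is already ensured by the construction preceding Lemma~\ref{lem:inter_area_change}.
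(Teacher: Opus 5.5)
Your proof is correct and follows essentially the same route as the paper: both compare through $\widehat f=\widetilde f\circ\pi_\M$, take the $O(h)$ contribution from Lemma~\ref{lem:inter_area_change}, and obtain the $O(h^2)$ contributions from the change of variables under $\pi_\M$, the bound $\|1-J_{\pi_\M}\|_{L^\infty(\Mh)}\le Ch^2$, and the symmetric-difference estimate. The only difference is bookkeeping: the paper first bounds $\|J_f-J_{\widetilde f}\circ\pi_\M\|_{L^\infty(\Mh)}$ and uses $\int_\Mh (J_{\widetilde f}\circ\pi_\M)^2\,\d A$ as the intermediate quantity, whereas you keep $\int_\Mh J_{\widehat f}^2\,\d A$ and absorb the extra factor $J_{\pi_\M}$ exactly in your term (II).
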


\begin{proof}
If $\partial\M\neq\varnothing$, we use the fixed smooth extension of $\widetilde f$ introduced above. The boundary-extension condition implies that $\M_h^\ell$ is contained in its domain for all sufficiently small $h$, so $J_{\widetilde f}$ is uniformly bounded on $\M\cup\M_h^\ell$.

By the chain rule and~\eqref{eq:proj_area_estimate},
\begin{align*}
\Big\| J_{\widehat f} - J_{\widetilde f} \circ \pi_\M \Big\|_{L^\infty(\Mh)}
 &= \Big\|(J_{\widetilde f} \circ \pi_\M) \,J_{\pi_\M} - J_{\widetilde f} \circ \pi_\M \Big\|_{L^\infty(\Mh)}\\
&\leq \Big\|J_{\widetilde f} \circ \pi_\M \Big\|_{L^\infty(\Mh)} \, \Big\|\,J_{\pi_\M} - 1 \Big\|_{L^\infty(\Mh)}
 \leq c h^2,
\end{align*}
for some constant $c>0$.
Combining this with estimate~\eqref{eq:inter_area_change} via the
triangle inequality,
\begin{align*}
\Big\| J_{f} - J_{\widetilde f} \circ \pi_\M \Big\|_{L^\infty(\Mh)}
&\leq \Big\| J_{f} - J_{\widehat f} \Big\|_{L^\infty(\Mh)} 
+ \Big\|J_{\widehat f} - J_{\widetilde f} \circ \pi_\M \Big\|_{L^\infty(\Mh)} \\
&\leq C h + c h^2 \leq \widetilde C h,
\end{align*}
for some constants $C, \widetilde C>0$.
Since $\widetilde{f}$ is a diffeomorphism, $J_{\widetilde{f}} \circ \pi_\M$ and $J_f$ are bounded.
Hence, using $a^2-b^2=(a-b)(a+b)$, we get
\begin{equation} \label{eq:squared_jacobian_error}
\Big\|J_{f}^2 - (J_{\widetilde f} \circ \pi_\M)^2 \Big\|_{L^\infty(\Mh)} \leq  C \Big\|J_{f} - J_{\widetilde f} \circ \pi_\M \Big\|_{L^\infty(\Mh)} \leq  ch
\end{equation}
for some constants $C,c>0$
For the stretch energy, the change-of-variables formula and the new symmetric-difference condition give
{\small
\begin{align*}
&\bigg|\sE_\rS(\widetilde f) - \int_\Mh J_{\widetilde f}(\pi_\M(x))^2 \,\d A\bigg| \\
&=\bigg|\bigg(\int_\M J_{\widetilde f}^2\,\d A_g
-\int_{\M_h^\ell}J_{\widetilde f}^2\,\d A_g\bigg)
+\bigg(\int_\Mh J_{\widetilde f}(\pi_\M(x))^2 J_{\pi_\M}(x)\,\d A
-\int_\Mh J_{\widetilde f}(\pi_\M(x))^2\,\d A\bigg)\bigg| \\
&\leq \bigg|\int_\M J_{\widetilde f}^2\,\d A_g
-\int_{\M_h^\ell}J_{\widetilde f}^2\,\d A_g\bigg| 
+\bigg|\int_\Mh J_{\widetilde f}(\pi_\M(x))^2
\bigl(J_{\pi_\M}(x)-1\bigr)\,\d A\bigg|\\
&\leq \|J_{\widetilde f}^2\|_{L^\infty}
\,|\M_h^\ell\mathbin{\triangle}\M|
+\|J_{\widetilde f}^2\|_{L^\infty}
\,\|J_{\pi_\M}-1\|_{L^\infty(\Mh)}|\Mh|\\
&\leq Ch^2,
\end{align*}
}
where~\eqref{eq:proj_area_estimate} and~\eqref{eq:lifted_domain_estimate} were used. For a closed surface, the first term vanishes because $\M_h^\ell=\M$.
Consequently, using~\eqref{eq:squared_jacobian_error},
\begin{align*}
\big| E_\rS(f) - \sE_\rS(\widetilde f) \big| 
&\leq \bigg| E_\rS(f) -  \int_\Mh J_{\widetilde f}(\pi_\M(x))^2  \d A \bigg| + \bigg|  \int_\Mh J_{\widetilde f}(\pi_\M(x))^2  \d A - \sE_\rS(\widetilde f) \bigg|   \\
& \leq \int_\Mh \Big| J_{f}^2 - J_{\widetilde f}(\pi_\M(x))^2 \Big|\, \d A +  C h^2 \\
& \leq c h\int_\Mh 1\, \d A +  C h^2,
\end{align*}
and hence 
\[
\big| E_\rS(f) - \sE_\rS(\widetilde f) \big| \leq C h
\]
for some constant $C > 0$.
\end{proof}

\subsection{Area distortion of discrete global minimizers}
\label{sec:5.3}

For the subsequent estimates, $h$ and $k_h$ denote the source- and image-mesh sizes, respectively. We first establish the admissibility of the piecewise linear interpolant.

\begin{lemma} \label{lem:interpolant_admissible}
Let $\widetilde f$ be an orientation-preserving diffeomorphism, and let $f_h:\Mh \to \Nh$ be its piecewise linear interpolant with $\M$ and $\Mh$ satisfying Assumption~\ref{ass:triangulation}. Then, for all sufficiently small $h$,
\begin{itemize}
\item the image mesh $\Nh:= f_h(\Mh)$ satisfies Assumption~\ref{ass:triangulation} with respect to $\N$, is uniformly shape-regular, and satisfies $ch\leq k_h \leq Ch$, where $c,C>0$ are independent of $h$;
\item $f_h$ is an orientation-preserving piecewise linear homeomorphism.
\end{itemize}
\end{lemma}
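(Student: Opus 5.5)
The plan is to compare $f_h$ facewise with the smooth comparison map $\widehat f=\widetilde f\circ\pi_\M$, using Lemma~\ref{lem:inter_area_change}. The key quantity is the derivative estimate $\|\rD f_h-\rD\widehat f\|_{L^\infty(\tau)}\le C\,\diam{\tau}$ established in its proof. Since $\widetilde f$ is a diffeomorphism on the compact (extended) domain and $\pi_\M$ restricted to each face is a $C^2$-small perturbation of the identity, $\rD\widehat f$ restricted to the plane of $\tau$ has singular values uniformly bounded above and below: $0<\sigma_{\min}\le\sigma\le\sigma_{\max}$, independent of $h$. For small $h$ the $O(h)$ perturbation keeps the singular values of $\rD f_h|_\tau$ in $[\sigma_{\min}/2,\,2\sigma_{\max}]$.

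From this two-sided bound the metric facts about $\Nh$ follow directly. Lengths are distorted by a bounded factor, so $\diam{f_h(\tau)}\simeq\diam{\tau}$; this gives $ch\le k_h\le Ch$, since shape-regularity forces the largest face to have diameter comparable to $h$. Inradii are likewise preserved up to constants, because an affine map with bounded condition number distorts the ratio $\diam{\cdot}/\inrad{\cdot}$ by at most a factor $\sigma_{\max}/\sigma_{\min}$. Hence $\Nh$ is uniformly shape-regular. The vertices of $\Nh$ are $\widetilde f(v_\ell)\in\N$, and boundary vertices map into $\partial\N$ because $\widetilde f(\partial\M)=\partial\N$.

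Next comes the closest-point projection onto $\N$. Since $\|f_h-\widehat f\|_\infty\le ch^2$ and $\widehat f$ takes values in $\N$, the mesh $\Nh$ lies in an $O(h^2)$ tubular neighborhood of $\N$, so $\pi_\N$ is well-defined. Bijectivity needs two facts:
\begin{enumerate}
\item Each face $f_h(\tau)$ is $O(h)$-close in normal direction to $T\N$ at its projected point. This follows because $\rD f_h$ is $O(h)$-close to $\rD\widehat f$, whose range is tangent to $\N$. So $\pi_\N$ restricted to each face is a local diffeomorphism with Jacobian near $1$.
\item The map $\pi_\N\circ f_h$ is $O(h^2)$-close in $C^0$, and $O(h)$-close in $C^1$ facewise, to $\widetilde f\circ\pi_\M$. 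The latter is a homeomorphism $\Mh\to\M_h^\ell$ followed by a diffeomorphism. A continuous map that is a local homeomorphism, is proper, and has degree one onto a connected target is a homeomorphism. The degree is computed by homotopy $\pi_\N\circ((1-s)\widehat f+s f_h)$, which stays in the tubular neighborhood.
\end{enumerate}

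For the boundary conditions I will use the boundary-extension hypothesis for $\Mh$ and transport it through the extended diffeomorphism $\widetilde f$, which is bi-Lipschitz near $\partial\M$. This gives $\N_h^\ell\triangle\N$ contained in a strip of width $Ch^2$ around $\partial\N$, and bijectivity of $\pi_{\partial\N}$ on the boundary polygon follows by the same local-homeomorphism argument in one dimension.

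For the second bullet, orientation preservation is facewise. $\det$ of $\rD f_h|_\tau$, measured with respect to the oriented normals (that is, the signed Jacobian $J_{f_h}$), is within $Ch$ of $J_{\widehat f}\ge c>0$, by Lemma~\ref{lem:inter_area_change} in its signed form. So every face is nondegenerate and positively oriented. Injectivity of $f_h$ then follows from the injectivity of $\pi_\N\circ f_h$ established above. \textbf{Main obstacle:} I expect this global bijectivity, ruling out overlapping of distant faces, to be the delicate step. I would handle it via the degree/covering argument: a local homeomorphism between compact connected surfaces, homotopic to a homeomorphism within maps respecting boundaries, is a one-sheeted covering.
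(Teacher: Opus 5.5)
Your facewise analysis matches the paper's. That covers the singular-value bounds for $\rD f_h|_\tau$ from the $O(h)$ derivative estimate, the diameter/inradius comparison giving shape-regularity and $ch\le k_h\le Ch$, orientation via the signed area vector, and the $O(h^2)$ tubular neighborhood. (Note that $k_h\ge ch$ needs no shape-regularity, since $h=\max_\tau\diam{\tau}$.)

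For global injectivity you take a genuinely different route. The paper proves injectivity quantitatively, without degree or covering arguments:
\begin{itemize}
\item It sets $q_h=\pi_{\N^e}\circ f_h\circ\widehat f^{-1}$ on the embedded surface $\widehat\N_h=\widehat f(\Mh)$.
\item It proves $\|q_h-\operatorname{id}\|_{L^\infty}\le Ch^2$ and, elementwise, $\|\rD q_h-I\|_{L^\infty}\le Ch$.
\item It uses uniform local quasiconvexity of $\widehat\N_h$. If $q_h(z_1)=q_h(z_2)$, then $|z_1-z_2|\le 2Ch^2$. Integrating $\rD q_h$ along a connecting curve of length at most $L|z_1-z_2|$ gives $0\ge(1-CLh)|z_1-z_2|$.
\end{itemize}
This treats vertices, edges and the boundary collar uniformly and needs no a priori local-homeomorphism property. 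Its price is the quasiconvexity of $\widehat\N_h$ near $\partial\N$.

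For closed surfaces your covering argument is a legitimate alternative, but it needs one supplement. Positive facewise Jacobians do not make $F:=\pi_\N\circ f_h$ a local homeomorphism at a vertex: the star may wrap $k\ge 2$ times around $F(v)$. You can exclude this in either of two ways:
\begin{itemize}
\item Use bounded valence together with the $O(h)$ perturbation of the image sector angles at $F(v)$.
\item Drop the local-homeomorphism hypothesis entirely. With positive facewise Jacobians and degree one, every value off the image of the $1$-skeleton has exactly one preimage. Openness of $F$ then rules out both branching and double points.
\end{itemize}

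For $\partial\M\neq\varnothing$, which the lemma also covers, the step you flag as the main obstacle does not go through as written. The homotopy $F_s=\pi_{\N^e}\circ((1-s)\widehat f+sf_h)$ does not respect boundaries:
\begin{itemize}
\item Neither $\widehat f$ nor $\pi_{\N^e}\circ f_h$ maps $\partial\Mh$ into $\partial\N$.
\item The curves $F_s(\partial\Mh)$ move within an $O(h^2)$ collar of $\partial\N$.
\item The target $\N_h^\ell$ is not known to be a surface before injectivity is established.
\end{itemize}
So there is no fixed connected target on which a degree-one covering argument can run. Homotopy invariance gives exactly one preimage only for values at distance $\gg h^2$ from $\partial\N$. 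Inside the collar, degree considerations alone cannot exclude self-overlap of an orientation-preserving immersion, since immersed disks need not be embedded. Some further input is needed there.

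One fix within your framework:
\begin{itemize}
\item First show that $F=\pi_{\N^e}\circ f_h$ embeds each boundary circle $\Gamma$ of $\Mh$. Your one-dimensional argument applied to $\pi_{\partial\N}\circ F|_\Gamma$ gives this.
\item Show that distinct boundary circles have disjoint images.
\item Count preimages by the degree relative to $F(\partial\Mh)$. It equals $1$ on the component of $\N^e\setminus F(\partial\Mh)$ containing a deep interior point, and $0$ on the outer collar pieces.
\end{itemize}
This also supplies the $O(h^2)$ collar bound for $\N_h^\ell\mathbin{\triangle}\N$. Transporting the hypothesis through $\widetilde f$ only controls $\widehat\N_h\mathbin{\triangle}\N$, not its image under $q_h$. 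Alternatively, use a local quantitative argument as the paper does.
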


\begin{proof}
Let
\[
\widehat f = \widetilde f \circ \pi_\M : \Mh \to \N^e,
\]
where $\N^e=\N$ in the closed case.
We first show that $f_h$ is uniformly non-degenerate on each face.
Since $\widetilde f$ is a diffeomorphism on the relevant compact
sets, its differential and inverse differential are uniformly
bounded. Moreover,~\eqref{eq:proj_area_estimate} and the uniform
boundedness of $\mathrm D\pi_\M$ imply that the two singular values of
$\mathrm D\pi_\M|_\tau$ are bounded above and away from zero. Hence,
there exist constants $m,M>0$, independent of $h$ and $\tau$, such
that
\[
m |\xi| \leq |\mathrm{D} \widehat{f}(x) \xi| \leq M |\xi|,
\]
for every $x \in \tau \in \F$ and every tangent vector $\xi$ to the affine plane of $\tau$.
By~\eqref{eq:interpolant_gradient_error},
\[
\| \mathrm{D} f_h - \mathrm{D} \widehat f \|_{L^\infty(\Mh)} \leq C h
\]
in the elementwise sense. Therefore, for all sufficiently small $h$,
\[
c |\xi| \leq |\mathrm{D} f_h(x) \xi| \leq C |\xi|,
\]
with $c,C>0$ independent of $h$ and $\tau$. The oriented area
two-vector of $\mathrm Df_h$ is an $O(h)$ perturbation of that of
$\mathrm D\widehat f$. Since $\widehat f$ is orientation-preserving
and its Jacobian is uniformly bounded away from zero, $f_h|_\tau$ is
non-degenerate and orientation-preserving for sufficiently small $h$.

Because $f_h|_\tau$ is affine, the preceding singular-value bounds
give, for all $x,y\in\tau$,
\[
c|x-y|\leq |f_h(x)-f_h(y)|\leq C|x-y|.
\]
Consequently,
\[
c\diam{\tau}\leq \diam{f_h(\tau)}\leq C\diam{\tau}.
\]
If $B_{x_0}(\inrad{\tau})\subset\tau$ is an in-ball, then
\[
B_{f_h(x_0)}\bigl(c\inrad{\tau}\bigr)
\subset f_h\bigl(B_{x_0}(\inrad{\tau})\bigr)
\subset f_h(\tau),
\]
where the balls are two-dimensional disks in the corresponding affine
planes. Thus,
\[
\inrad{f_h(\tau)}\geq c\inrad{\tau},
\qquad
\frac{\diam{f_h(\tau)}}{\inrad{f_h(\tau)}}
\leq \frac{C}{c}\frac{\diam{\tau}}{\inrad{\tau}}.
\]
The image meshes are therefore uniformly shape-regular. Taking the
maximum over $\tau$ in the two-sided diameter estimate also gives
\[
ch\leq k_h\leq Ch.
\]

It remains to establish global injectivity. Since $\pi_\M:\Mh\to\M_h^\ell$ is a homeomorphism and the chosen extension of $\widetilde f$ is a diffeomorphism onto its image, $\widehat f$ is a homeomorphism from $\Mh$ onto the compact lifted surface
\[
\widehat\N_h:=\widehat f(\Mh)\subset\N^e.
\]
Let $\pi_{\N^e}$ denote the closest-point projection onto $\N^e$. Since
$\|f_h-\widehat f\|_{L^\infty(\Mh)}\leq Ch^2$ and
$\widehat\N_h\subset\N^e$, the image $f_h(\Mh)$ lies in a fixed tubular
neighborhood of $\N^e$ for sufficiently small $h$. Define
\[
q_h:=\pi_{\N^e}\circ f_h\circ\widehat f^{-1}
:\widehat\N_h\to\N^e.
\]
We now derive the closeness of $q_h$ to the identity explicitly. For
$z\in\widehat\N_h$, set $x=\widehat f^{-1}(z)$. Since
$\pi_{\N^e}(\widehat f(x))=\widehat f(x)=z$, the smoothness of
$\pi_{\N^e}$ and~\eqref{eq:interpolant_error} give
\begin{align*}
|q_h(z)-z|
&=\big|\pi_{\N^e}(f_h(x))-\pi_{\N^e}(\widehat f(x))\big|\\
&\leq C|f_h(x)-\widehat f(x)|
\leq Ch^2.
\end{align*}
Hence,
\[
\|q_h-\operatorname{id}\|_{L^\infty(\widehat\N_h)}
\leq Ch^2.
\]

For the derivative estimate, we choose local coordinates on $\N^e$, so
that both $\mathrm Dq_h$ and $I$ are $2$-by-$2$ matrices. The chain rule
gives
\[
\mathrm Dq_h(z)
=\mathrm D\pi_{\N^e}(f_h(x))\,\mathrm Df_h(x)\,
\mathrm D\widehat f^{-1}(z),
\]
whereas
\[
I
=\mathrm D\pi_{\N^e}(\widehat f(x))\,\mathrm D\widehat f(x)\,
\mathrm D\widehat f^{-1}(z).
\]
Consequently,
\begin{align*}
\mathrm Dq_h(z)-I
&=\Big(\mathrm D\pi_{\N^e}(f_h(x))
-\mathrm D\pi_{\N^e}(\widehat f(x))\Big)
\mathrm Df_h(x)\,\mathrm D\widehat f^{-1}(z)\\
&\quad+\mathrm D\pi_{\N^e}(\widehat f(x))
\Big(\mathrm Df_h(x)-\mathrm D\widehat f(x)\Big)
\mathrm D\widehat f^{-1}(z).
\end{align*}
The smoothness of $\pi_{\N^e}$, the uniform boundedness of
$\mathrm Df_h$ and $\mathrm D\widehat f^{-1}$, and
estimates~\eqref{eq:interpolant_error} and
\eqref{eq:interpolant_gradient_error} imply
\[
\|\mathrm Dq_h-I\|_{L^\infty(\widehat\N_h)}
\leq C(h^2+h)\leq Ch.
\]
Here and below, derivative estimates for $q_h$ are understood
elementwise in fixed local coordinates.

We next prove that $q_h$ is globally injective.
The family $\{\widehat\N_h\}$ is uniformly locally quasiconvex:
there exist constants $r,L>0$, independent of $h$, such that,
for all sufficiently small $h$, any two points
$z_1,z_2\in\widehat\N_h$ with $|z_1-z_2|<r$ can be joined by a
rectifiable curve $\gamma\subset\widehat\N_h$ satisfying
\[
    \operatorname{length}(\gamma)
    \leq L |z_1-z_2|.
\]
Indeed, away from the boundary this follows from the fixed smooth
geometry of $\N^e$. Near the boundary, the one-dimensional analogue
of~\eqref{eq:interpolant_gradient_error} shows that
$\partial\widehat\N_h$ is an $O(h)$ perturbation in the piecewise
$C^1$ sense of $\partial\N$. Together with the
boundary-compatibility condition, the domains $\widehat\N_h$ are
therefore uniformly Lipschitz in a fixed finite collection of
boundary charts, which gives the stated uniform local quasiconvexity.

Suppose now that
\[
    q_h(z_1)=q_h(z_2).
\]
By the $L^\infty$ estimate,
\[
    |z_1-z_2|
    \leq |z_1-q_h(z_1)|
        + |q_h(z_2)-z_2|
    \leq 2Ch^2.
\]
Hence $|z_1-z_2|<r$ for sufficiently small $h$.

Choose a chart from a fixed finite atlas of $\N^e$ containing the
corresponding short curve $\gamma$. In these coordinates, the fundamental theorem of calculus gives 
\begin{align*}
q_h(z_2)-q_h(z_1) &=  \int_0^1 \mathrm{D} q_h (\gamma(t)) \gamma'(t) \, \d t 
=  \int_0^1 \gamma'(t) \, \d t  + \int_0^1 ( \mathrm{D} q_h - I) \gamma'(t) \, \d t.
\end{align*}
The reverse triangle inequality gives
\[
|q_h(z_2)-q_h(z_1) | \geq |z_2 - z_1| - \bigg| \int_0^1 ( \mathrm{D} q_h - I) \gamma'(t) \, \d t \bigg|.
\]
By quasiconvexity, we have 
\begin{align*}
\bigg| \int_0^1 ( \mathrm{D} q_h - I) \gamma'(t) \, \d t \bigg| 
&\leq \int_0^1 \| \mathrm{D} q_h - I\| |\gamma'(t) | \d t \\
& \leq C h  \operatorname{length}(\gamma)
\leq C L h |z_2 - z_1|.
\end{align*}
Hence, we obtain
\[
|q_h(z_2)-q_h(z_1)| \geq (1 - Ch L) |z_2- z_1|.
\]
Since $q_h(z_1)=q_h(z_2)$, the left-hand side vanishes. For all sufficiently small $h$, $1-CLh>0$, and therefore $z_1=z_2$. Thus $q_h$ is globally injective.

We now transfer this conclusion to $f_h$. If
$f_h(x_1)=f_h(x_2)$, then
\[
q_h\bigl(\widehat f(x_1)\bigr)
=\pi_{\N^e}\bigl(f_h(x_1)\bigr)
=\pi_{\N^e}\bigl(f_h(x_2)\bigr)
=q_h\bigl(\widehat f(x_2)\bigr).
\]
The injectivity of $q_h$ and $\widehat f$ implies $x_1=x_2$. Therefore, $f_h$ is globally injective. Since $\Mh$ is compact, it is a homeomorphism onto $\Nh=f_h(\Mh)$. Together with the facewise orientation result above, this proves that $f_h$ is an orientation-preserving piecewise linear homeomorphism.

Moreover, we define
\[
\N_h^\ell:=\pi_{\N^e}(\Nh)=q_h(\widehat\N_h).
\]
To see that the closest-point projection is injective on $\Nh$, let $y_1,y_2\in\Nh$ satisfy $\pi_{\N^e}(y_1)=\pi_{\N^e}(y_2)$. Since $f_h$ maps $\Mh$ onto $\Nh$, write $y_i=f_h(x_i)$. Then
\[
q_h\bigl(\widehat f(x_1)\bigr)
=\pi_{\N^e}(y_1)
=\pi_{\N^e}(y_2)
=q_h\bigl(\widehat f(x_2)\bigr).
\]
Because both $q_h$ and $\widehat f$ are injective, $x_1=x_2$, and thus $y_1=y_2$. Surjectivity onto $\N_h^\ell$ follows directly from the definition of $\N_h^\ell$. Hence,
\[
\pi_{\N^e}|_{\Nh}:\Nh\to\N_h^\ell
\]
is bijective. In the closed case, $\widehat\N_h=\N$, and the injection $q_h:\N\to\N$ is surjective by invariance of domain and connectedness. Thus $\N_h^\ell=\N$.

It remains to verify the boundary condition when $\partial\N\neq\varnothing$. Since $\widetilde f|_{\partial\M}$ is a diffeomorphism, the $\partial \widehat\N_h$ is an $\O(h^2)$ positional and $\O(h)$ tangential perturbation of $\partial\N$. Together with
\[
\|q_h-\operatorname{id}\|_{L^\infty}\le Ch^2,
\qquad
\|\mathrm Dq_h-I\|_{L^\infty}\le Ch,
\]
the same estimates hold for
\[
\partial\N_h^\ell=q_h(\partial\widehat\N_h).
\]
Hence, for sufficiently small $h$, each boundary component of
$\partial\N_h^\ell$ is a degree-one $C^1$ perturbation of the
corresponding component of $\partial\N$. Therefore the closest-point
projection
\[
\pi_{\partial\N}:\partial\N_h^\ell\to\partial\N
\]
is bijective.

Moreover, since
$\widehat\N_h\mathbin{\triangle}\N$ lies in an $O(h^2)$ boundary
strip and $q_h$ is an orientation-preserving $O(h^2)$ perturbation
of the identity, we have
\[
\N_h^\ell\mathbin{\triangle}\N
\subset
\{z\in\N^e:\rho_{\partial\N}(z)\le Ch^2\}.
\]
Since $ch\le k_h\le Ch$, this also gives
\[
\N_h^\ell\mathbin{\triangle}\N
\subset
\{z\in\N^e:\rho_{\partial\N}(z)\le Ck_h^2\}.
\]
Thus $\Nh$ satisfies the boundary-extension condition.
\end{proof}

We now use the image-mesh estimate in Lemma~\ref{lem:interpolant_admissible} to bound the stretch-energy error solely in terms of the source-mesh size.

\begin{lemma} \label{lem:area_preserving_est}
Let $\widetilde f$ be an orientation-preserving area-preserving diffeomorphism, and let $f_h:\Mh \to \Nh$ be its piecewise linear interpolant with $\M$ and $\Mh$ satisfying Assumption~\ref{ass:triangulation}. If $|\M| = |\N|$, then, for all sufficiently small $h$,
\begin{equation}  \label{eq:area_preserving_est}
\Big| E_\rS(f_h) - |\Mh|  \Big| \leq Ch^2,
\end{equation}
for some constant $C > 0$.
\end{lemma}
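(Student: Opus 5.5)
The plan is to avoid the $O(h)$ loss in Theorem~\ref{thm:consistency} by expanding the discrete stretch energy around the value $1$. That way the first-order error in $J_{f_h}$ appears only squared, and the remaining linear term becomes a difference of total areas. Writing $J_\tau := J_{f_h|_\tau} = |f_h(\tau)|/|\tau|$, I would use the exact identity
\[
E_\rS(f_h) - |\Mh|
= \sum_{\tau\in\F} |\tau|\,(J_\tau-1)^2 + 2\sum_{\tau\in\F}|\tau| J_\tau - 2|\Mh|
= \sum_{\tau\in\F} |\tau|\,(J_\tau-1)^2 + 2\bigl(|\Nh| - |\Mh|\bigr).
\]
The last equality uses $\sum_\tau |\tau|J_\tau = \sum_\tau |f_h(\tau)| = |\Nh|$. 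This needs $f_h$ to be an orientation-preserving piecewise linear homeomorphism, so that the image triangles do not overlap; Lemma~\ref{lem:interpolant_admissible} supplies this for small $h$. It then suffices to bound each of the two terms by $Ch^2$.

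\emph{Quadratic term.} Since $\widetilde f$ is area-preserving, $J_{\widetilde f}\equiv 1$. The chain rule then gives $J_{\widehat f} = (J_{\widetilde f}\circ\pi_\M)\,J_{\pi_\M} = J_{\pi_\M}$, so \eqref{eq:proj_area_estimate} yields $\|J_{\widehat f}-1\|_{L^\infty(\Mh)}\le Ch^2$. Lemma~\ref{lem:inter_area_change} gives $\|J_{f_h}-J_{\widehat f}\|_{L^\infty(\Mh)}\le Ch$. Combining these, $|J_\tau - 1|\le Ch$ on every face, and hence
\[
\sum_\tau|\tau|(J_\tau-1)^2 \le C h^2 |\Mh| \le Ch^2,
\]
where $|\Mh|$ is uniformly bounded by Lemma~\ref{lem:area_change}.

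\emph{Linear term.} I split it as
\[
|\Nh|-|\Mh| = \bigl(|\Nh|-|\N|\bigr) + \bigl(|\N|-|\M|\bigr) + \bigl(|\M|-|\Mh|\bigr).
\]
The middle term vanishes by hypothesis. The last term is $O(h^2)$ by \eqref{eq:lifted_domain_estimate}. For the first term, I would apply Lemma~\ref{lem:area_change} to the image mesh $\Nh$ approximating $\N$, which gives $\bigl||\Nh|-|\N|\bigr|\le Ck_h^2$. Lemma~\ref{lem:interpolant_admissible} gives $k_h\le Ch$, so this term is also $O(h^2)$.

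Applying Lemma~\ref{lem:area_change} to $\Nh$ is the step I expect to require care, since its hypotheses must be checked for $\Nh$ rather than $\Mh$.
\begin{itemize}
\item The vertices of $\Nh$ lie on $\N$: each vertex $v$ of $\Mh$ lies on $\M$, so $\pi_\M(v)=v$ and $f_h(v)=\widetilde f(v)\in\N$.
\item The closest-point, boundary-extension and shape-regularity conditions hold with mesh size $k_h$; this is exactly the content of Lemma~\ref{lem:interpolant_admissible}.
\item The constants are uniform in $h$, because they depend only on the fixed geometry of $\N$, $\N^e$ and the shape-regularity constant established there.
\end{itemize}
With this verified, summing the two bounds gives \eqref{eq:area_preserving_est}.
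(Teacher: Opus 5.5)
Your proposal follows the paper's proof essentially step for step. You use the same splitting of $E_\rS(f_h)-|\Mh|$ into $\sum_\tau|\tau|(J_\tau-1)^2$ plus $2(|\Nh|-|\Mh|)$; you state it as an exact identity, the paper as a triangle-inequality bound. You get the same pointwise $O(h)$ estimate of $J_\tau-1$ through Lemma~\ref{lem:inter_area_change}. The total-area difference is handled the same way, via $|\M|=|\N|$ and Lemma~\ref{lem:area_change} applied to both $\Mh$ and $\Nh$, with $k_h\le Ch$ from Lemma~\ref{lem:interpolant_admissible}.

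There is, however, one step that is not justified when $\partial\M\neq\varnothing$: ``$J_{\widetilde f}\equiv 1$, hence $J_{\widehat f}=J_{\pi_\M}$''. In that case $\pi_\M$ maps $\Mh$ onto the lift $\M_h^\ell$, which in general extends beyond $\M$. On $\M_h^\ell\setminus\M$ the comparison map $\widehat f=\widetilde f\circ\pi_\M$ uses only the fixed diffeomorphic extension of $\widetilde f$ into $\M^e$, and that extension is not area-preserving. So on faces near $\partial\Mh$ you cannot read off $\|J_{\widehat f}-1\|_{L^\infty}\le Ch^2$ from \eqref{eq:proj_area_estimate} alone.

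The paper fills this with the boundary-extension condition of Assumption~\ref{ass:triangulation}. If $\pi_\M(x)\in\M_h^\ell\setminus\M$, then $\rho_{\partial\M}(\pi_\M(x))\le Ch^2$. Since $J_{\widetilde f}$ is Lipschitz on the compact extension domain and equals $1$ on $\partial\M$ by continuity, this gives $|J_{\widetilde f}(\pi_\M(x))-1|\le Ch^2$. The chain rule then yields $|J_{\widehat f}-1|\le Ch^2$ on all of $\Mh$; for your quadratic term even an $O(h)$ bound there would suffice. With this addition your argument is complete. In the closed case $\M_h^\ell=\M$, and your proof is correct as written.
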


\begin{proof}
By Lemma~\ref{lem:interpolant_admissible}, the image-mesh size satisfies $k_h\leq Ch$.
We rewrite the deviation from the ideal minimum as
\begin{align}
\Big| E_\rS(f_h) - |\Mh| \Big|
&\leq \Big| E_\rS(f_h) - 2 |\Nh| + |\Mh| \Big| + 2 \, \Big| |\Nh| - |\Mh| \Big| \nonumber\\
&\leq \sum_{\tau \in \F} \bigg|\frac{|f_h(\tau)|^2}{|\tau|} - 2 |f_h(\tau)| + |\tau| \bigg| + 2\, \Big| |\Nh| - |\Mh| \Big| \nonumber\\
&= \sum_{\tau \in \F} |\tau|\Big| J_{f_h|_\tau} - 1 \Big|^2  +  2\, \Big| |\Nh| - |\Mh| \Big|. \label{eq:energy_gap_decomposition}
\end{align}
Fix $x \in \tau \in \F$. By the triangle inequality,
\begin{equation*}
\bigl| J_{f_h|_\tau}-1 \bigr|
\leq \bigl|J_{f_h|_\tau}-J_{\widehat f|_\tau}(x)\bigr|
+ \bigl|J_{\widehat f|_\tau}(x)-1\bigr|.
\end{equation*}
If $\pi_\M(x)\in\M$, area preservation gives
$J_{\widetilde f}(\pi_\M(x))=1$. If
$\pi_\M(x)\in\M_h^\ell\setminus\M$, the boundary-extension condition in Assumption~\ref{ass:triangulation} gives $\rho_{\partial\M}(\pi_\M(x))\leq Ch^2$.
Let $y\in\partial\M$ be a closest boundary point to $\pi_\M(x)$.
By area preservation and continuity up to $\partial\M$, $J_{\widetilde f}(y)=1$. Hence, the mean value theorem gives
\[
\big|J_{\widetilde f}(\pi_\M(x))-1\big| 
= \big|J_{\widetilde f}(\pi_\M(x))- J_{\widetilde f}(y)\big| 
\leq C \big|\pi_\M(x)-y\big|
\leq Ch^2.
\]
The chain rule and Lemma~\ref{lem:area_change} therefore give
\begin{align*}
\Big| J_{\widehat f|_\tau}(x) -1\Big| 
&= \Big| J_{\widetilde f}\bigl(\pi_\M|_\tau(x)\bigr) J_{\pi_\M|_\tau}(x) - 1 \Big| \\
& \leq \Big|J_{\widetilde f}\bigl(\pi_\M|_\tau(x)\bigr)-1\Big| J_{\pi_\M|_\tau}(x) +\Big|J_{\pi_\M|_\tau}(x)-1\Big| \\
& \leq Ch^2.
\end{align*}
By Lemma~\ref{lem:inter_area_change}, combining with the triangle inequality yields
\[
\bigl| J_{f_h|_\tau}-1 \bigr| \leq \bigl| J_{f_h|_\tau}- J_{\widehat f|_\tau}(x) \bigr| +
\bigl| J_{\widehat f|_\tau}(x)-1 \bigr| \leq ch,
\]
for some constant $c >0$.
Moreover, applying the total-area estimate~\eqref{eq:lifted_domain_estimate} to both source and image meshes, and using $k_h\leq Ch$, gives
\[
\Big| |\M| - |\Mh| \Big| \leq ch^2
\quad\text{and}\quad
\Big||\N| - |\Nh| \Big| \leq Ck_h^2 \leq \widetilde C h^2.
\]
Since $|\M| = |\N|$, it follows that
\begin{equation} \label{eq:total_area_disc}
\Big| |\Nh| - |\Mh| \Big| \leq \Big| |\Nh| - |\N| \Big| + \Big| |\M| - |\Mh| \Big| \leq Ch^2.
\end{equation}
Substituting this into~\eqref{eq:energy_gap_decomposition} gives
\begin{align*}
\Big| E_\rS(f_h) - |\Mh| \Big|
&\leq \sum_{\tau \in \F} |\tau| \Big|J_{f_h|_\tau}-1 \Big|^2
+ 2 \, \Big| |\Nh| - |\Mh| \Big|\\
&\leq \sum_{\tau \in \F} |\tau|\,(ch)^2 + Ch^2
\leq \widetilde C h^2,
\end{align*}
for some constant $\widetilde C > 0$.
\end{proof}

Accordingly, we have the following result.

\begin{theorem} \label{thm:minimizer_area}
Let $\M,\N\subset\R^3$ be compact, connected, oriented smooth surfaces that are orientation-preservingly diffeomorphic and satisfy
$|\M|=|\N|$. Let $\{\Mh\}_{h>0}$ satisfy Assumption~\ref{ass:triangulation}, and define
\[
\mathcal A_h =
\left\{ f:\Mh\to\Nh \;\middle|\;
\begin{array}{c}
    \Nh=f(\Mh),\quad
    f \text{ is an orientation-preserving}\\
    \text{PL homeomorphism}, \quad f(\V)\subset\N
\end{array}
\right\}.
\]
For each sufficiently small $h$, let $f_h^*\in\mathcal A_h$ be a
global minimizer of $E_\rS$, and set $\Nh^*:=f_h^*(\Mh)$.
Suppose that the family $\{\N_h^*:=f_h^*(\Mh)\}_{h>0}$ satisfies
Assumption~\ref{ass:triangulation} with respect to $\N$,
with all constants independent of $h$ and $k_h^*\leq Ch$.
Then
\[
\big\|J_{f_h^*}-1\big\|_{L^2(\Mh)}
\le Ch,
\]
where $C>0$ is independent of $h$.
\end{theorem}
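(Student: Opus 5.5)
The plan is a comparison argument: bound the energy of the minimizer from above by that of the interpolant of a smooth area-preserving map, then turn the energy gap into an $L^2$ bound on $J_{f_h^*}-1$ using the variance identity of Corollary~\ref{cor:Es_var}. The only subtlety is that $|\Nh^*|$ need not equal $|\Mh|$ exactly, so the identity picks up an error term from the total areas.

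\emph{Competitor.} By the Moser argument of Section~\ref{sec:2.2}, there is an orientation-preserving area-preserving diffeomorphism $\widetilde f:\M\to\N$. Let $f_h$ be its piecewise linear interpolant.
\begin{itemize}
\item By Lemma~\ref{lem:interpolant_admissible}, for all sufficiently small $h$, $f_h$ is an orientation-preserving PL homeomorphism with vertices on $\N$, so $f_h\in\mathcal A_h$.
\item By Lemma~\ref{lem:area_preserving_est}, $E_\rS(f_h)\le |\Mh|+Ch^2$.
\item Minimality then gives $E_\rS(f_h^*)\le |\Mh|+Ch^2$.
\end{itemize}

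\emph{Expansion.} Write $J^*=J_{f_h^*}$, which is piecewise constant. Expand exactly as in \eqref{eq:energy_gap_decomposition}, but as an identity rather than an inequality:
\[
\int_\Mh (J^*-1)^2\,\d A=\sum_{\tau\in\F}|\tau|\,(J^*_\tau-1)^2 = E_\rS(f_h^*)-2|\Nh^*|+|\Mh|.
\]
This holds because $f_h^*$ is orientation-preserving on each face, so $\sum_\tau|f_h^*(\tau)|=|\Nh^*|$. Substituting the upper bound on $E_\rS(f_h^*)$ gives
\[
\|J^*-1\|_{L^2(\Mh)}^2\le 2\big(|\Mh|-|\Nh^*|\big)+Ch^2.
\]

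\emph{Main obstacle: controlling $\big||\Nh^*|-|\Mh|\big|$.} In the smooth setting this term vanishes by normalization, but in the discrete setting it must be shown to be $O(h^2)$. This is exactly what the hypothesis on $\{\Nh^*\}$ is for.
\begin{itemize}
\item Since $\{\Nh^*\}$ satisfies Assumption~\ref{ass:triangulation} with respect to $\N$, with uniform constants and $k_h^*\le Ch$, Lemma~\ref{lem:area_change} applied to the image family gives $\big||\Nh^*|-|\N|\big|\le C(k_h^*)^2\le Ch^2$.
\item Lemma~\ref{lem:area_change} applied to the source family gives $\big||\Mh|-|\M|\big|\le Ch^2$.
\item Since $|\M|=|\N|$, the triangle inequality yields $\big||\Nh^*|-|\Mh|\big|\le Ch^2$, as in \eqref{eq:total_area_disc}.
\end{itemize}
One point needs checking: Lemma~\ref{lem:area_change} was stated for $\Mh$, but its proof uses only Assumption~\ref{ass:triangulation} and the mesh size, so it transfers verbatim to $\Nh^*$ with $h$ replaced by $k_h^*$.

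\emph{Conclusion.} Substituting the area bound into the displayed inequality gives $\|J^*-1\|_{L^2(\Mh)}^2\le Ch^2$. Taking square roots gives $\|J_{f_h^*}-1\|_{L^2(\Mh)}\le Ch$.
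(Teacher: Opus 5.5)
Your proposal is correct and follows essentially the same route as the paper. You compare against the interpolant of a Moser--Banyaga area-preserving diffeomorphism using Lemmas~\ref{lem:interpolant_admissible} and~\ref{lem:area_preserving_est}, expand $\|J_{f_h^*}-1\|_{L^2(\Mh)}^2=E_\rS(f_h^*)-2|\Nh^*|+|\Mh|$, and bound $\bigl||\Nh^*|-|\Mh|\bigr|$ by applying Lemma~\ref{lem:area_change} to both the source and image families with $|\M|=|\N|$ and $k_h^*\le Ch$; your note that this lemma transfers to $\Nh^*$ with $h$ replaced by $k_h^*$ is exactly what the paper uses implicitly.
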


\begin{proof}
By the Moser--Banyaga theorem recalled above, there exists an orientation-preserving area-preserving diffeomorphism $\widetilde f:\M\to\N$. Let $f_h$ be its piecewise linear interpolant. By Lemma~\ref{lem:interpolant_admissible}, $f_h\in\mathcal A_h$ for all sufficiently small $h$. Hence, by Lemma~\ref{lem:area_preserving_est} and the minimality of
$f_h^*$,
\[
E_\rS(f_h^*) \le E_\rS(f_h) \le |\Mh|+Ch^2.
\]

Since $\Nh^*$ satisfies Assumption~\ref{ass:triangulation} with
respect to $\N$ with constants independent of $h$ and $k_h^*\leq Ch$,
Lemma~\ref{lem:area_change}
gives
\[
\big||\Mh|-|\M|\big|\le Ch^2,
\qquad
\big||\Nh^*|-|\N|\big|\le Ch^2.
\]
Together with $|\M|=|\N|$, this yields
\[
\big||\Nh^*|-|\Mh|\big|\le Ch^2.
\]
Therefore,
\begin{align*}
\big \| J_{f_h^*}-1 \big\|_{L^2(\Mh)}^2
&= \int_{\Mh} (J_{f_h^*}-1)^2\,\d A 
= \int_{\Mh} \bigl(J_{f_h^*}^2 - 2 J_{f_h^*} + 1\bigr)\,\d A \\
&= E_\rS(f_h^*) - 2|\Nh^*| + |\Mh|\\
&\le
Ch^2+2\big||\Nh^*|-|\Mh|\big| 
\le Ch^2,
\end{align*}
and taking the square root completes the proof.
\end{proof}

\section{Numerical experiments}
\label{sec:6}
In this section, we present numerical experiments to evaluate the effectiveness of the proposed method. All computations were carried out in MATLAB R2024b on a laptop equipped with an AMD Ryzen 9 5900HS processor and 32 GB of RAM. We test the method on eight benchmark meshes: four open surfaces (Lion, Max Planck, Beetle, and Face) and four closed surfaces (David, Gargoyle, Vertebrae, and Rocker Arm).

\subsection{Results of proposed method}
\label{sec:6.1}
After computing each parameterization, we normalize the total area so that $|\Mh| = |\Nh|$. We measure area distortion by the unweighted variance of the triangle-wise area ratios defined in \eqref{eq:f_tau}:
\begin{equation} \label{eq:res_var}
e_{\mathrm{var}}(f) = \operatornamewithlimits{Var}_{\tau \in \F}\bigg( \frac{|f(\tau)|}{|\tau|} \bigg),
\end{equation}
and we also report the normalized stretch-energy residual
\begin{equation} \label{eq:res_en}
e_\mathrm{en}(f) = \frac{E_\rS(f)}{|\Mh|}  - 1,    
\end{equation}
which equals the area-weighted variance of the area ratios by \eqref{eq:Es_var_discrete}.

Figure~\ref{fig:MeshModel_open} displays the disk parameterizations and corresponding histograms of area ratios for the open meshes, including both simply connected and multiply connected examples. The resulting planar maps preserve local areas well, and the area ratios are tightly concentrated around $1$, indicating excellent area preservation.

The method performs similarly well on closed surfaces. Figure~\ref{fig:MeshModel_close} shows spherical and toroidal parameterizations together with the corresponding area-ratio histograms. The toroidal cases exhibit slightly larger variance, which may be due to the greater discretization error on curved toroidal target domains.

Table~\ref{tab:flow} reports runtime, energy residual \eqref{eq:res_en}, variance \eqref{eq:res_var}, number of folded triangles, and number of iterations. Across all examples, both $e_{\mathrm{var}}$ and $e_{\mathrm{en}}$ remain on the order of $10^{-4}$--$10^{-3}$, while the runtime stays below $25$ seconds. Moreover, none of the computed parameterizations contains folded triangles, confirming the fold-free behavior of the computed maps.

Overall, discrete authalic flow handles surfaces of various topologies and consistently produces efficient, high-quality area-preserving parameterizations.

\begin{figure}[t]
\centering
\resizebox{\textwidth}{!}{
\begin{tabular}{ccccccc}
\specialrule{.2em}{.1em}{.1em}
\multicolumn{3}{c}{Lion} && \multicolumn{3}{c}{Max Planck} \\
\multicolumn{3}{c}{$\#\F = 34,421$  $\#\V = 17,334$} && \multicolumn{3}{c}{$\#\F = 82,977$  $\#\V = 41,588$} \\
\includegraphics[height=3cm]{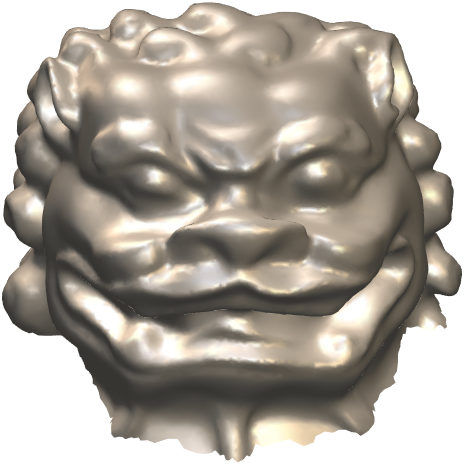} &
\includegraphics[height=2.8cm]{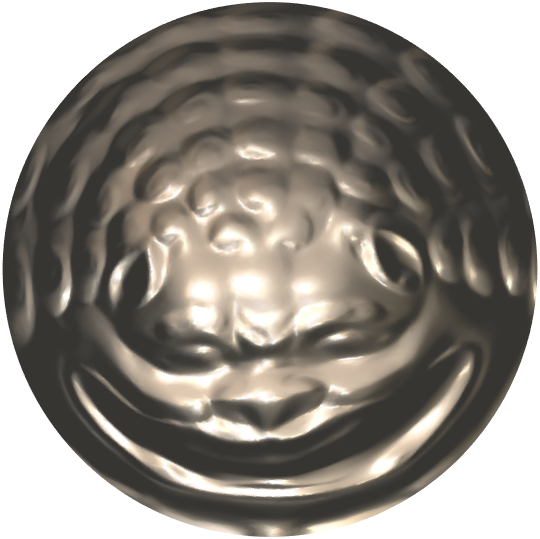} &
\includegraphics[height=3cm]{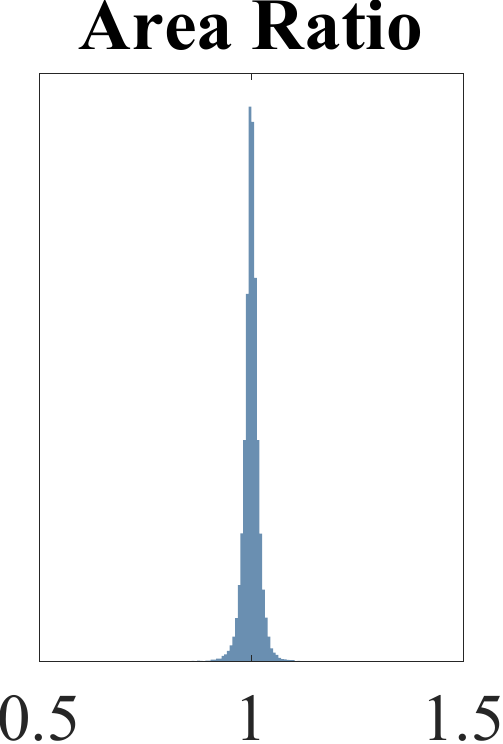} &&
\includegraphics[height=3cm]{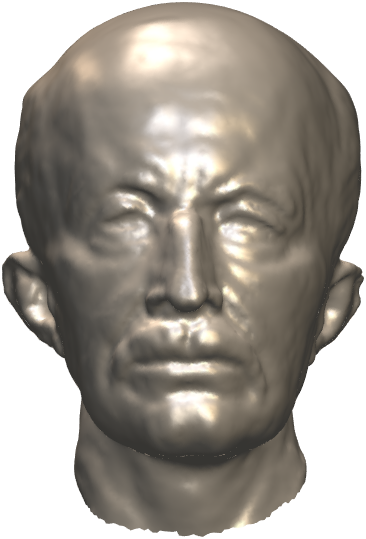} &
\includegraphics[height=2.7cm]{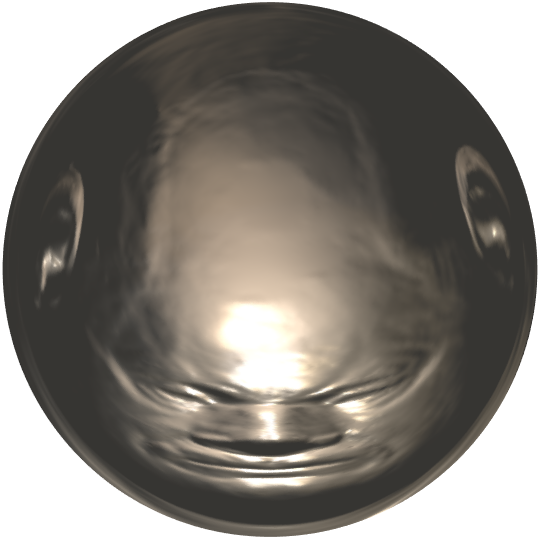} &
\includegraphics[height=3cm]{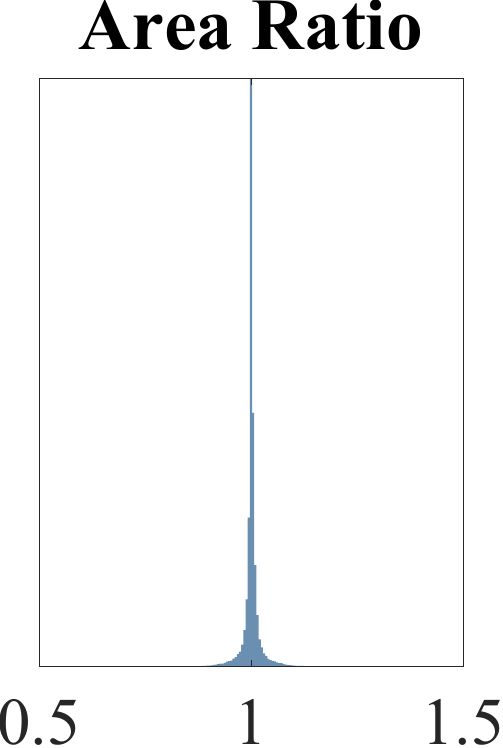} \\
\Cline{1pt}{1-3}\Cline{1pt}{5-7}
\multicolumn{3}{c}{Beetle} && \multicolumn{3}{c}{Face} \\
\multicolumn{3}{c}{$\#\F = 1,763$  $\#\V = 988$} && \multicolumn{3}{c}{$\#\F = 27,306$  $\#\V = 13,969$} \\
\includegraphics[height=3cm]{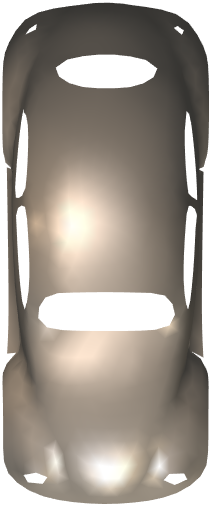} &
\includegraphics[height=2.6cm]{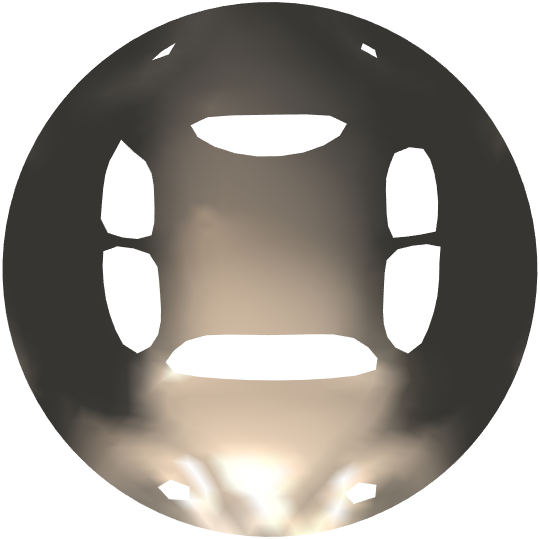} &
\includegraphics[height=3cm]{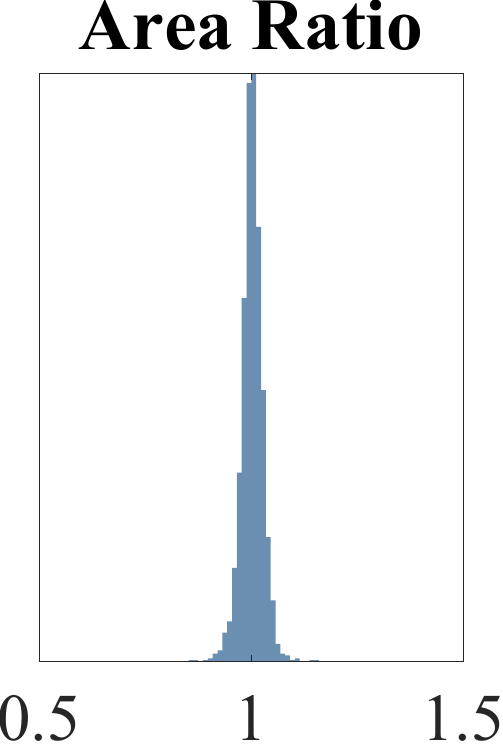} &&
\includegraphics[height=3cm]{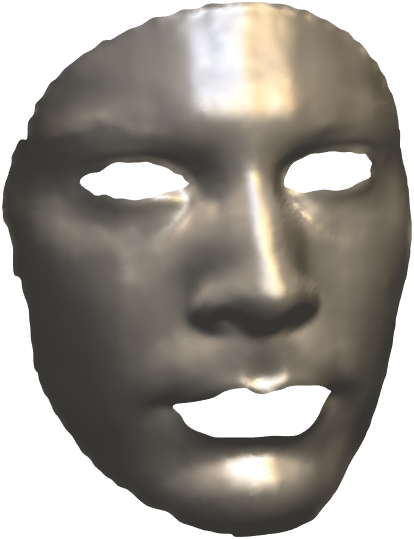} &
\includegraphics[height=2.8cm]{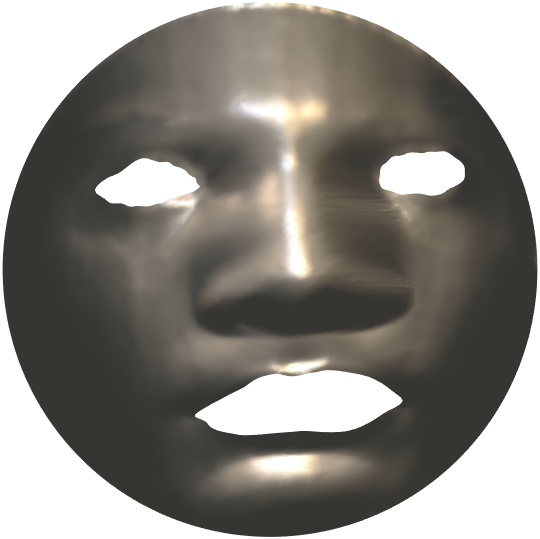} &
\includegraphics[height=3cm]{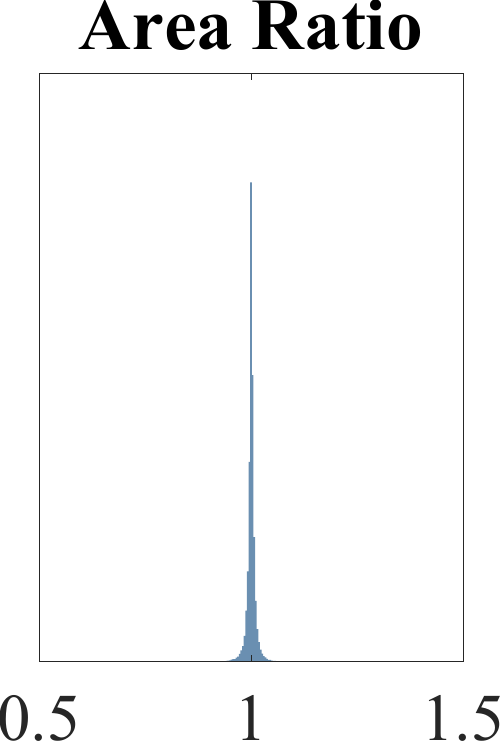} \\
\specialrule{.2em}{.1em}{.1em}
\end{tabular}
}
\caption{Open-surface models (left), corresponding area-preserving parameterizations (middle), and histograms of triangle-wise area ratios (right).}
\label{fig:MeshModel_open}
\end{figure}

\begin{figure}[t]
\centering
\resizebox{\textwidth}{!}{
\begin{tabular}{ccccccc}
\specialrule{.2em}{.1em}{.1em}
\multicolumn{3}{c}{David} && \multicolumn{3}{c}{Gargoyle} \\
\multicolumn{3}{c}{$\#\F = 21,338$  $\#\V = 10,671$} && \multicolumn{3}{c}{$\#\F = 100,000$  $\#\V = 50,002$} \\
\includegraphics[height=3cm]{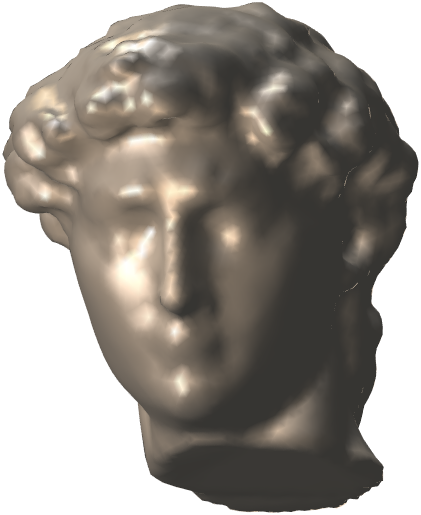} &
\includegraphics[height=2.7cm]{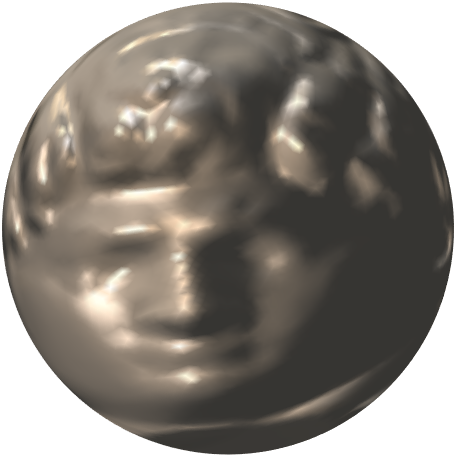} &
\includegraphics[height=3cm]{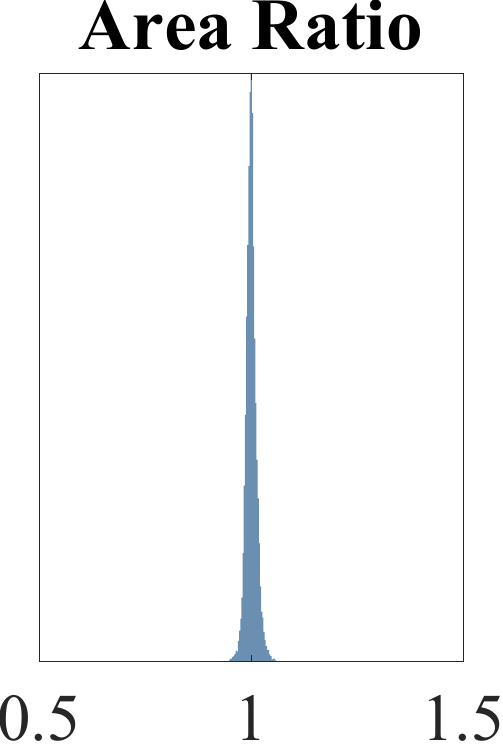} &&
\includegraphics[height=3cm]{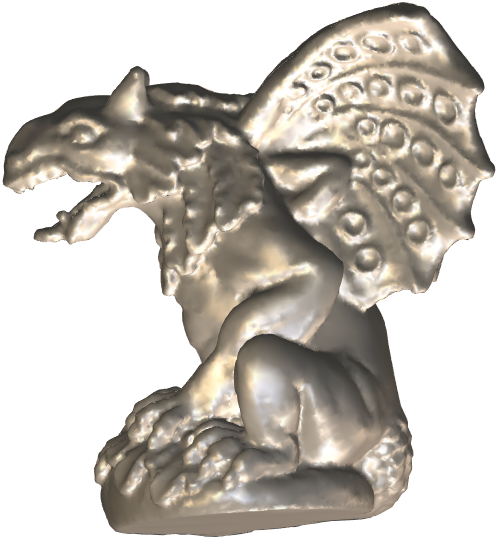} &
\includegraphics[height=2.7cm]{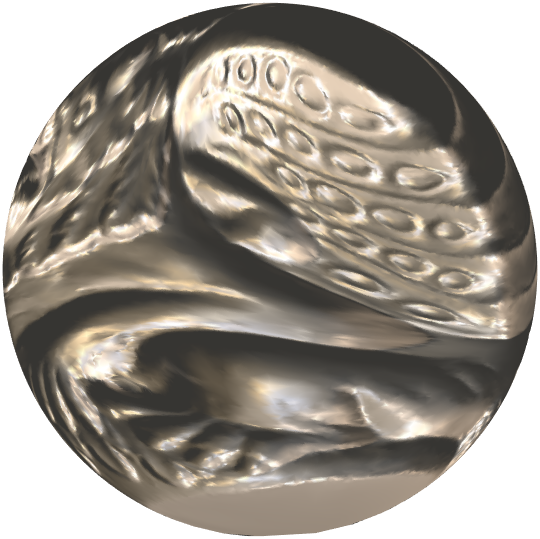} &
\includegraphics[height=3cm]{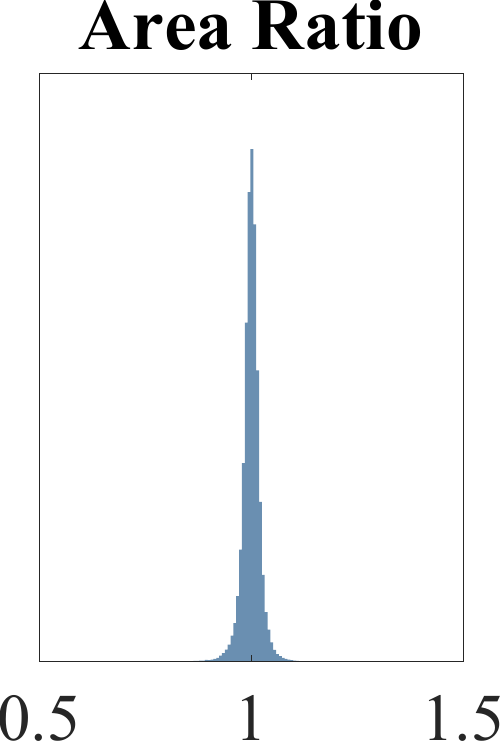} \\
\Cline{1pt}{1-3}\Cline{1pt}{5-7}
\multicolumn{3}{c}{Vertebrae} && \multicolumn{3}{c}{Rocker Arm} \\
\multicolumn{3}{c}{$\#\F = 16,420$  $\#\V =  8,210$} && \multicolumn{3}{c}{$\#\F = 20,088$  $\#\V = 10,044$} \\
\includegraphics[height=3cm]{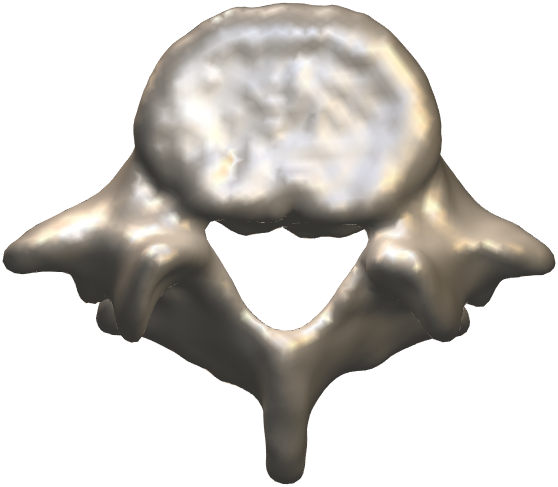} &
\includegraphics[height=2.3cm]{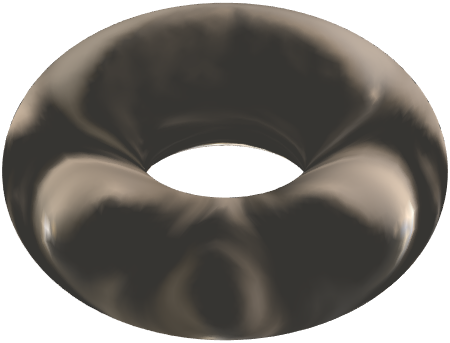} &
\includegraphics[height=3cm]{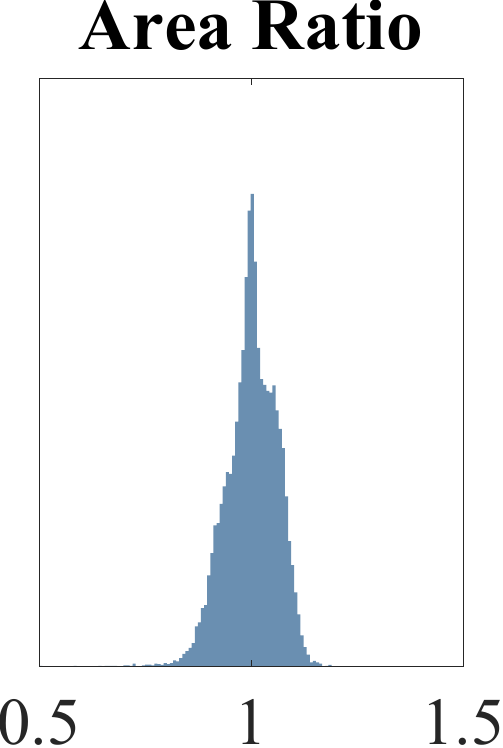} &&
\includegraphics[height=3cm]{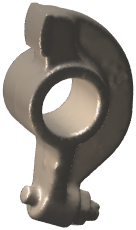} &
\includegraphics[height=2.3cm]{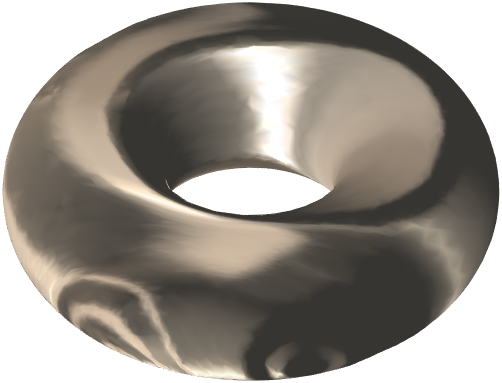} &
\includegraphics[height=3cm]{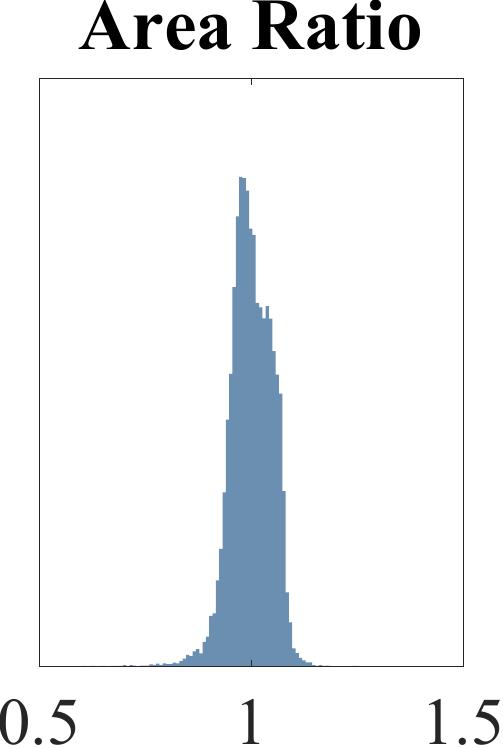} \\
\specialrule{.2em}{.1em}{.1em}
\end{tabular}
}
\caption{Closed-surface models (left), corresponding area-preserving parameterizations (middle), and histograms of triangle-wise area ratios (right).}
\label{fig:MeshModel_close}
\end{figure}

\begin{table}[tbp]
\centering
\caption{
Numerical results for discrete authalic flow with stopping criterion: energy decrease $< 10^{-5}$.
$e_{\mathrm{en}}$: energy residual \eqref{eq:res_en};
$e_{\mathrm{var}}$: unweighted variance of area ratios \eqref{eq:res_var}.
}
\label{tab:flow}
\begin{tabular}{lrcccc}
\specialrule{.2em}{.1em}{.1em}    
Model name & Time (s) & $e_\mathrm{en}$ & $e_\mathrm{var}$ & $\#\text{Fold.}$ & $\#\text{Iter.}$ \\
\hline 
Lion       & 3.29  &$4.36 \times 10^{-4}$ & $4.50 \times 10^{-4}$  & 0 & 15\\
Max Planck & 7.14  &$3.55 \times 10^{-4}$ & $3.61 \times 10^{-4}$  & 0 & 9\\
Beetle     & 0.45  &$2.68 \times 10^{-3}$ & $2.78 \times 10^{-3}$  & 0 & 13\\
Face       & 2.76  &$1.12 \times 10^{-4}$ & $1.26 \times 10^{-4}$  & 0 & 7\\
David      & 3.64  &$1.82 \times 10^{-4}$ & $1.81 \times 10^{-4}$  & 0 & 45\\
Gargoyle   & 23.81 &$3.64 \times 10^{-4}$ & $5.26 \times 10^{-4}$  & 0 & 71\\
Vertebrae  & 13.42 &$3.67 \times 10^{-3}$ & $3.71 \times 10^{-3}$  & 0 & 410\\
Rocker Arm & 14.28 &$2.51 \times 10^{-3}$ & $2.50 \times 10^{-3}$  & 0 & 476\\
\specialrule{.2em}{.1em}{.1em}    
\end{tabular}
\end{table}

\subsection{Comparison with state-of-the-art methods}
\label{sec:6.2}
We compare discrete authalic flow (DAF) with stretch energy minimization (SEM) \cite{YuLW19, YuLL19, YuLL20}, density-equalizing map (DEM) \cite{ChRy18, LyLC24, YaCh26}, and optimal transport (OT) \cite{ZhSG13, CuQW19}. The implementations of DEM and OT were obtained from Choi's website\footnote{\url{https://www.math.cuhk.edu.hk/~ptchoi/software.html}} and Gu's website\footnote{\url{https://www.cs.stonybrook.edu/~gu/software/index.html}}, respectively. Note that OT provides no source code for toroidal and multiply connected open meshes.

Table~\ref{tab:others} summarizes the energy residual $e_{\mathrm{en}}$ and number of folded triangles produced by each method. DAF attains the smallest energy residual on six of the eight benchmarks. Although SEM attains the smallest residual on Max Planck and Vertebrae, its results are close to those of DAF. Both DAF and SEM produce no folded triangles in these tests, whereas DEM produces folds on several models.

Figure~\ref{fig:others} visualizes these comparisons: the left panel reports the energy residuals, while the right panel plots the residual ratios of SEM, DEM, and OT relative to DAF. We observe that DAF and SEM generally yield substantially smaller residuals than DEM and OT. Moreover, averaged over the eight benchmarks, the SEM-to-DAF residual ratio is approximately $2.4$, while the residuals of DEM and OT are substantially larger on several models.

Overall, DAF remains fold-free and gives a lower energy residual on most of the tested benchmarks.

\begin{table}[tbp]
\centering
\caption{
Numerical comparison of discrete authalic flow (DAF), stretch energy minimization (SEM) \cite{YuLW19, YuLL19, YuLL20}, density-equalizing map (DEM) \cite{ChRy18, LyLC24, YaCh26}, and optimal transport (OT) \cite{ZhSG13, CuQW19}. $e_\mathrm{en}$: energy residual \eqref{eq:res_en}; $\#\text{Fold.}$: number of folded triangles.
}
\label{tab:others}
\resizebox{\textwidth}{!}{
\begin{tabular}{lcrcrcrcr}
\specialrule{.2em}{.1em}{.1em}
\multirow{2}{*}{Model name}  &  \multicolumn{2}{c}{DAF} & 
\multicolumn{2}{c}{SEM \cite{YuLW19, YuLL19, YuLL20}$^*$} & 
\multicolumn{2}{c}{DEM \cite{ChRy18, LyLC24, YaCh26}$^\dagger$} & 
\multicolumn{2}{c}{OT \cite{ZhSG13, CuQW19}$^\ddagger$}  \\ 
& $e_\mathrm{en}$ & $\#\text{Fold.}$ & $e_\mathrm{en}$ & $\#\text{Fold.}$ & $e_\mathrm{en}$ & $\#\text{Fold.}$ & $e_\mathrm{en}$ & $\#\text{Fold.}$ \\
\hline 
Lion       &$4.36 \times 10^{-4}$ & 0 & $4.39 \times 10^{-4}$ & 0 & $1.71 \times 10^{-1}$  & 363 & $3.38 \times 10^{-2}$ &0\\
Max Planck &$3.55 \times 10^{-4}$ & 0 & $3.45 \times 10^{-4}$ & 0 & $1.65 \times 10^{~0~}$ & 5671& $1.27 \times 10^{-2}$ &14\\
Beetle     &$2.68 \times 10^{-3}$ & 0 & $2.69 \times 10^{-3}$ & 0 & $3.83 \times 10^{-2}$  & 0   & -- & --\\
Face       &$1.12 \times 10^{-4}$ & 0 & $1.14 \times 10^{-4}$ & 0 & $2.26 \times 10^{-1}$  & 255 & -- & --\\
David      &$1.82 \times 10^{-4}$ & 0 & $3.56 \times 10^{-4}$ & 0 & $3.02 \times 10^{-4}$  & 0   & $1.08 \times 10^{-1}$ &0\\
Gargoyle   &$3.64 \times 10^{-4}$ & 0 & $2.72 \times 10^{-3}$ & 0 & $1.19 \times 10^{~0~}$ & 1   & $1.89 \times 10^{~0~}$ &0\\
Vertebrae  &$3.67 \times 10^{-3}$ & 0 & $2.94 \times 10^{-3}$ & 0 & $6.79 \times 10^{-2}$  & 11  & -- & --\\  
Rocker Arm &$2.51 \times 10^{-3}$ & 0 & $1.31 \times 10^{-2}$ & 0 & $2.58 \times 10^{-1}$  & 39  & -- & --\\  
\specialrule{.2em}{.1em}{.1em}
\multicolumn{9}{l}{\small --: No source code available.} \\
\multicolumn{9}{l}{\small $^*$: Stops when the normalized energy decrease is $< 10^{-5}$.} \\
\multicolumn{9}{l}{\small $^\dagger$: Stops at 50 iterations (disk and sphere) or 200 (toroidal).} \\
\multicolumn{9}{l}{\small $^\ddagger$: Stops at 5 iterations with step length $= 0.5$ (0.0005 for Gargoyle).} \\ 
\end{tabular}
}
\end{table}

\begin{figure}[ht]
\centering
\resizebox{\textwidth}{!}{
\begin{tabular}{cc}
\includegraphics[height=5.7cm]{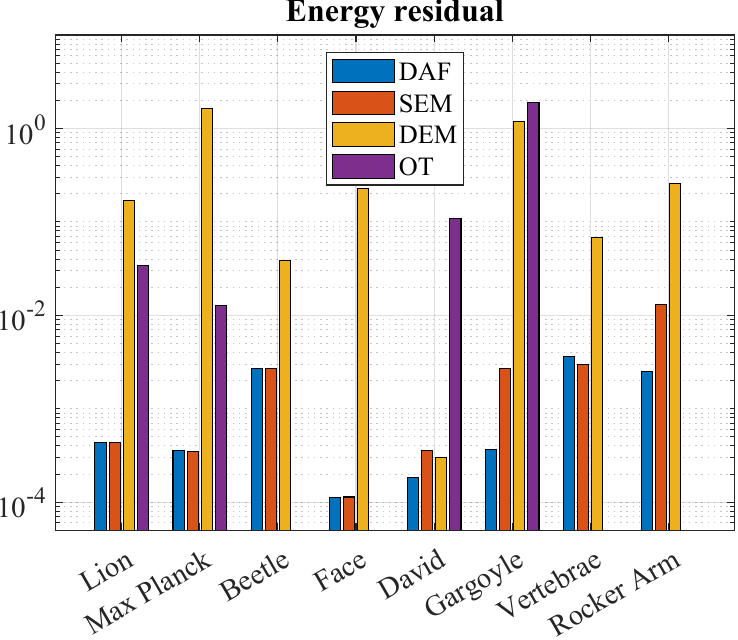} &
\raisebox{0.5cm}{\includegraphics[height=5.2cm]{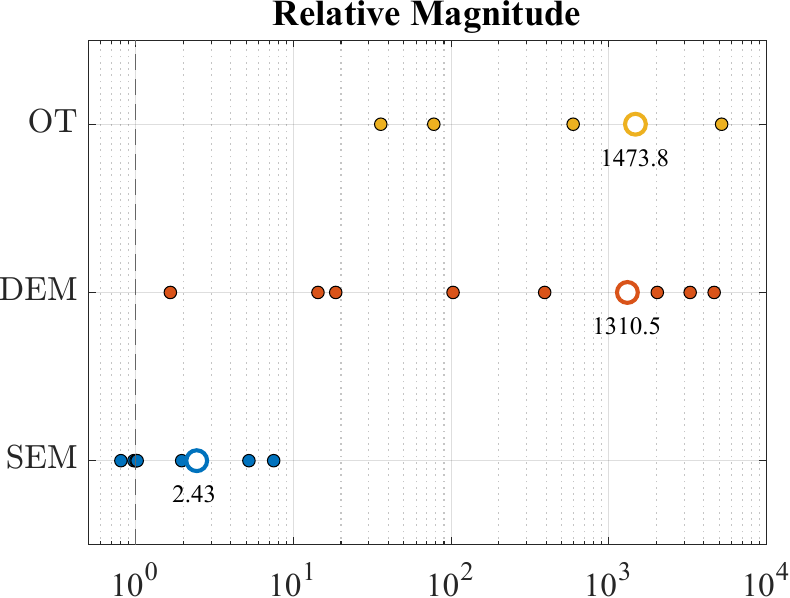}}
\end{tabular}
}
\caption{
Left: Energy residual $e_\mathrm{en}$ for DAF, SEM, DEM, and OT. Right: Ratio of energy residuals relative to DAF, where the large circles denote the average for each method.
}
\label{fig:others}
\end{figure}

\section{Conclusion and discussion}
\label{sec:7}

In this paper, we developed a variational framework for area-preserving parameterization based on stretch energy. We formulated the stretch energy for diffeomorphisms between equal-area compact Riemannian $2$-manifolds, showed that every critical point is area-preserving, and derived the associated $L^2$-gradient flow, which we call the authalic flow. We then extended the authalic flow to the simplicial setting through the discrete stretch energy and discretized it in time using a quasi-implicit Euler method, applying it to compute mesh parameterizations of open and closed surfaces with different topologies. We also proved that the discrete energy is consistent with the continuous counterpart and that discrete global minimizers have $L^2$ area distortion of the order of the mesh size under the stated geometric approximation assumptions. Numerical experiments on eight benchmark models show that the proposed method produces fold-free maps in all reported tests and achieves the lowest normalized stretch energy residual on six of the eight models among the compared methods, including stretch energy minimization, density-equalizing maps, and optimal transport maps.

From a theoretical perspective, previous work on stretch energy~\cite{Yueh23, LiYu24} was developed only for simplicial maps and established lower bounds that are attained if and only if the map is area-preserving. In this work, we complete the picture by introducing the stretch energy for diffeomorphisms between equal-area smooth manifolds. This allows us not only to identify the connection between stretch energy and area preservation through the variance of the area ratio, but also to show that every critical point is area-preserving at the smooth level and that the area distortion of discrete global minimizers is of order the mesh size. Taken together, these results establish a rigorous theoretical link between discrete and smooth formulations of area preservation based on stretch energy.

From a methodological perspective, prior work on stretch energy minimization primarily relied on fixed-point iteration~\cite{YuLW19}, in which the cotangent-weighted Laplacian matrix is replaced by the stretch Laplacian matrix, and the desired map is viewed as a fixed point of the resulting iteration. This strategy was subsequently extended to spherical parameterization via stereographic projection~\cite{YuLL19} and to toroidal parameterization via holomorphic $1$-forms~\cite{YuLL20}. 
However, because these extensions solve fixed-point equations in auxiliary planar coordinates, their fixed points do not generally coincide with constrained critical points on the target surface. By contrast, the discrete authalic flow is defined by the tangential lumped discrete $L^2$-gradient of the stretch energy, so the equilibria of the underlying flow are precisely the constrained critical points.

Since an exactly area-preserving parameterization of a triangular mesh may not exist, optimal transport frameworks \cite{GuLS16} instead work on a dual polyhedral complex rather than directly on the original simplicial complex. Based on the theory of convex polyhedra, the dual complex is represented as a decomposition induced by a piecewise linear convex function. By treating this function as the optimization variable, Newton's method is applied to enforce area preservation on the induced decomposition \cite{ZhSG13, CuQW19}. Nonetheless, this approach is restricted to convex domains, and exact area preservation on the dual polyhedral complex does not necessarily translate into area preservation on the original simplicial complex, so the induced parameterization may still exhibit area distortion.

Unlike variational approaches, the density-equalizing map \cite{ChRy18} formulates the problem as a partial differential equation for the area ratio. It introduces an auxiliary density representing the area ratio, which is evolved by heat flow, and the vertex positions are recovered by integrating the induced velocity field. This evolution can also be carried out on the tangent plane of the target surface \cite{LyLC24, LyLC26}. However, in numerical computations, the auxiliary density evolved by heat flow may differ from the true area ratios induced by the integrated velocity field.

That said, the proposed framework of authalic flow has several limitations. Global existence of the continuous authalic flow and global convergence of discrete algorithms have not yet been established. Moreover, the discrete stretch energy may admit multiple critical points, which can lead to varying degrees of area preservation in the resulting maps. We leave these questions for future work.

\small
\bibliographystyle{abbrvurl}
\bibliography{reference}

\end{document}